\newcommand\al{\alpha}
\newcommand\be{\beta}
\newcommand\dd{\mathrm d}
\newcommand\De{\Delta}
\newcommand\de{\delta}
\newcommand\deq{\stackrel{\mathrm d}{=}}
\newcommand\eps{\varepsilon}
\newcommand\ga{\gamma}
\newcommand\ka{\kappa}
\newcommand\La{\Lambda}
\newcommand\la{\lambda}
\newcommand\Om{\Omega}
\newcommand\om{\omega}
\newcommand\si{\sigma}
\renewcommand\d{~\mathrm d}
\renewcommand\phi{\varphi}
\renewcommand\rho{\varrho}
\newcommand\bs{\boldsymbol}
\newcommand\mbb{\mathbb}
\newcommand\mbf{\mathbf}
\newcommand\mc{\mathcal}
\newcommand\mf{\mathfrak}
\newcommand\mr{\mathrm}
\newcommand\ms{\mathscr}
\begin{document}



%

%

\section{Introduction}

Let $I\subset\mbb R$ be an open interval (possibly unbounded) and $V:I\to\mbb R$ be a function.
Let $ H:=-\tfrac12\De+V$ denote a Schr\"odinger operator with potential $V$
acting on functions $f:I\to\mbb R$ with
prescribed boundary conditions when $I$ has a boundary. In this paper, we are interested
in random operators of the form
\begin{align}\label{Equation: Perturbed Operator}
{\hat H}:= H+\xi,
\end{align}
where $\xi$ is a stationary Gaussian noise on $\mbb R$.
Informally, we think of $\xi$ as a centered Gaussian process on
$\mbb R$ with a covariance of the form $\mbf E[\xi(x)\xi(y)]=\ga(x-y)$,
where $\ga$ is an even almost-everywhere-defined function
or Schwartz distribution. In many
cases that we consider, $\ga$ is not an actual function, and thus
$\xi$ cannot be defined as a random function on $\mbb R$; in such
cases $\xi$ can be defined rigorously as a random Schwarz distribution,
i.e., a centered Gaussian process on an appropriate function space with covariance
\[\mbf E\big[\xi(f)\xi(g)\big]=\int_\mbb R f(x)\ga(x-y)g(y)\d x\dd y,\qquad f,g:\mbb R\to\mbb R.\]

Among the most powerful tools used to study Schr\"odinger operators are
their semigroups (e.g., \cite{Simon}); we recall that the semigroup generated by $ H$ is
the family of operators formally defined as $\mr e^{-t H}$ for $t>0$. Provided the potentials under consideration
are sufficiently well behaved, there is a remarkable connection between Schr\"odinger semigroups
and the theory of stochastic processes that can be expressed in the form of the Feynman-Kac formula
(e.g., \cite[Theorem A.2.7]{Simon}): Assuming $I=\mbb R$ for simplicity, for every $f\in L^2(\mbb R)$,
$t>0$, and $x\in\mbb R$, one has
\begin{align}
\label{Equation: Deterministic Feynman-Kac}
\mr e^{-t  H}f(x)
=
\mbf E^x\left[\exp\left(-\int_0^t V\big(B(s)\big)\d s\right)
f\big(B(t)\big)\right]
\end{align}
where $B$ is a Brownian motion and $\mbf E^x$ signifies that
we are taking the expected value with respect to $B$ conditioned on the starting point $B(0)=x$. Apart
from the obvious benefit of making Schr\"odinger semigroups amenable to
probabilistic methods, we note that the Feynman-Kac formula can in fact form the
basis of the definition of $ H$ itself, as done, for instance, in \cite{McKean}.

Our purpose in this paper is to lay out the foundations of a general semigroup theory (or Feynman-Kac formulas)
for random Schr\"odinger operators of the form \eqref{Equation: Perturbed Operator}.
We note that, since we consider very irregular noises (i.e., in general $\xi$ is not a proper function that can be evaluated
at points in $\mbb R$), this undertaking is not a direct application or a trivial extension
of the classical theory; see Section \ref{Section: Intro - Overview} for more details.
As a first step in this program, we show that a variety of tools recently developed in the random matrix
theory literature (e.g., \cite{BloemendalVirag,GaudreauLamarreShkolnikov,GorinShkolnikov,
KrishnapurRiderVirag,Minami,RamirezRiderVirag}) to tackle special cases of this problem can be suitably
extended to a rather general setting. The main restriction of our assumptions is that we consider cases
where the semigroup $\mr e^{-t{\hat H}}$ is trace class, which implies in particular that ${\hat H}$ must
have a purely discrete spectrum.

This paper is organized as follows. In the remainder of this introduction, we present a brief outline of our main
results and discuss some motivations and applications. In Section \ref{Section: Main Results},
we give a precise statement of our results (our main result is {\bf Theorem \ref{Theorem: Semigroup}},
and our second main result is {\bf Proposition \ref{Proposition: Operator}}). In Section \ref{Section: Outline},
we provide an outline of the proof of our main results. Finally, in Sections \ref{Section: Operator} and
\ref{Section: Semigroup}, we go over the technical details of the proof of our results.

\subsection{Overview of Results}\label{Section: Intro - Overview}

As mentioned earlier in this introduction,
much of the challenge involved in our program comes from the fact that, in general, Gaussian noises are
Schwartz distributions. This creates two main technical obstacles.

The first obstacle is that it is not immediately obvious how to define the operator ${\hat H}$.
Indeed, if we interpret $\xi$ as being part of the potential of ${\hat H}$, then the action
\[{\hat H}f
\ ``=" \
-\tfrac12 f''+(V+\xi)f\]
of the operator on a function $f$ includes the $``$pointwise product$"$ $\xi f$,
which is not well defined if $\xi$ cannot be evaluated at single points in $\mbb R$. The second obstacle
comes from the definition of $\mr e^{-t{\hat H}}$. Arguably, the most natural guess for this
semigroup would be to add $\xi$ to the potential in the usual Feynman-Kac formula
\eqref{Equation: Deterministic Feynman-Kac}, which yields
\begin{align}\label{Equation: Random Guess Feynman-Kac}
\mr e^{-t {\hat H}}f(x)
\ ``=" \
\mbf E^x\left[\exp\left(-\int_0^t V\big(B(s)\big)+\xi\big(B(s)\big)\d s\right)
f\big(B(t)\big)\right].
\end{align}
However, this again requires the ability to evaluate $\xi$ at every point.

The key to overcoming these obstacles is to interpret $\xi$ as the distributional
derivative of an actual Gaussian process.
More precisely, let $\Xi$ be the Gaussian process on $\mbb R$
defined as
\begin{align}
\label{Equation: Anti-Derivative Process}
\Xi(x):=\begin{cases}
\xi(\mbf 1_{[0,x)}),&x\geq0\\
\xi(-\mbf 1_{[x,0)}),&x\leq0.
\end{cases}
\end{align}
Assuming $\Xi$ has a version with
measurable sample paths (and we neglect boundary values for simplicity),
a formal integration by parts yields
\[\xi(f)=\langle f, \Xi'\rangle:=-\langle f', \Xi\rangle.\]
Following this line of thought, we may then settle on a $``$weak$"$ definition of ${\hat H}$ through the  form
\begin{align}\label{Equation: Intro Quadratic Form}
\langle f,{\hat H}g\rangle:=\langle f, Hg\rangle+\xi(fg)=\langle f, Hg\rangle-\langle f'g+fg', \Xi\rangle.
\end{align}
We note that this type of definition for ${\hat H}$ has previously appeared in the literature (e.g.,
\cite{BloemendalVirag,FukushimaNakao,Minami,RamirezRiderVirag}) for various potentials $V$ on the half line $I=(0,\infty)$
as well as $V=0$ on a bounded interval $I=(0,L)$ ($L>0$).
We also note an alternative approach outlined by Bloemendal in \cite[Appendix A]{BloemendalThesis} that
allows one (in principle) to recast ${\hat H}$ as the classical Sturm-Liouville operator
\begin{align}
\label{Equation: Bloemendal}
S f=-w^{-1}(\tfrac w2 f')'+(V-2\Xi^2)f,\qquad\text{where }w(x):=\exp\left(4\int_0^x\Xi(y)\d y\right)
\end{align}
through a suitable Hilbert space isomorphism.
Our first result (namely, {\bf Proposition \ref{Proposition: Operator}}) is an extension of these statements:
We provide a very succinct proof of the fact that, under fairly general conditions on $\Xi$ and $V$, the form \eqref{Equation: Intro Quadratic Form}
corresponds to a unique self-adjoint operator with compact resolvent, including when $I$ is the whole real line
or a bounded interval with a nonzero potential.

The interpretation $\xi=\Xi'$ also leads to a natural candidate for the semigroup generated by ${\hat H}$:
Let $L^a_t(B)$ ($a\in\mbb R$, $t\geq0$) be the local time process of the  Brownian motion $B$
so that for any measurable function $f$, we have
\[\int_0^t f\big(B(s)\big)\d s=\int_{\mbb R}L^a_t(B) f(a)\d a.\]
Assuming a stochastic integral with respect to $\Xi$ can
meaningfully be defined,
we may then interpret the problematic term in $\mr e^{-t{\hat H}}$'s intuitive derivation
\eqref{Equation: Random Guess Feynman-Kac} thusly:
\[\int_0^t V\big(B(s)\big)+\xi\big(B(s)\big)\d s:=\int_{\mbb R} L^a_t(B)\d Q(a),\]
where $Q$ is the process $\dd Q(x)=V(x)\dd x+\dd \Xi(x)$, which we assume to be
independent of $B$. In the case where $I=\mbb R$, for example, this suggests that
\begin{align}\label{Equation: Intro Random Feynman-Kac}
\mr e^{-t{\hat H}}f(x)=\mbf E^x\left[\exp\left(-\int_{\mathbb R}L^a_t(B)\d Q(a)\right)
f\big(B(t)\big)\right],
\end{align}
where $\mbf E^x$ now denotes the conditional expectation of $\big(B|B(0)=x\big)$ given $\Xi$.
This type of random semigroup has appeared in \cite{GaudreauLamarreShkolnikov,
GorinShkolnikov} in the special case where  $I$ is the positive half line $(0,\infty)$, $V(x)=x$, and
$\Xi$ is a Brownian motion (so that $\xi$ is a Gaussian white noise;
see Example \ref{Example: Four Noises} for more details).
Our second and main result (namely, {\bf Theorem \ref{Theorem: Semigroup}}) provides general sufficient conditions
under which a Feynman-Kac formula of the form \eqref{Equation: Intro Random Feynman-Kac}
holds (we refer to \eqref{Equation: Random Semigroup} for a statement of our Feynman-Kac
formula when $I$ is the half line or a bounded interval).
This result can be seen as a comprehensive
generalization of
\cite[Proposition 1.8 (a)]{GaudreauLamarreShkolnikov} and \cite[Corollary 2.2]{GorinShkolnikov}.
We refer to Section \ref{Section: Semigroup Outline} for a detailed exposition of our method of proof.

One interesting consequence of Theorem \ref{Theorem: Semigroup} is the following connection
between the random functional \eqref{Equation: Intro Random Feynman-Kac} and the spectrum of ${\hat H}$:
Let $\la_1({\hat H})\leq\la_2({\hat H})\leq\cdots$ be the eigenvalues of ${\hat H}$ 
and $\psi_1({\hat H}),\psi_2({\hat H}),\ldots$ be the associated eigenfunctions,
which are defined by the variational principle (i.e., Courant-Fischer)
associated with the form \eqref{Equation: Intro Quadratic Form}.
By Theorem \ref{Theorem: Semigroup},
in many cases the spectral expansion
\[\mr e^{-t\hat H}f=\sum_{k=1}^\infty\mr e^{-t\la_k({\hat H})}\langle\psi_k({\hat H}),f\rangle\psi_k({\hat H}),
\qquad f\in L^2(\mbb R)\]
admits an explicit probabilistic representation of the form \eqref{Equation: Intro Random Feynman-Kac}.
We expect this connection to be fruitful in two directions.

On the one hand,
a good understanding of ${\hat H}$'s spectrum could be used to study the geometric properties of the function
$u(t,x):=\mr e^{-t{\hat H}}f(x)$, which we may interpret as the solution of the SPDE with multiplicative noise
\[\partial_t u=-(Hu+\xi u),\qquad u(0,x)=f(x).\]
We refer to Section \ref{Section: Anderson Hamiltonian} below for
more motivation in this direction.

On the other hand, the Feynman-Kac formula
can be used to study the properties of the eigenvalues and
eigenfunctions of $\hat H$ (we refer to \cite{Simon} for
classical examples of this involving the deterministic operator $H$).
In particular, our Feynman-Kac formula provides
a means of computing the $``$Laplace transforms$"$
\begin{align}
\label{Equation: Laplace Transform}
\mbf E\left[\prod_{i=1}^\ell\sum_{k=1}^\infty\mr e^{-t_i\la_k(\hat H)}\right]
=\mbf E\left[\prod_{i=1}^\ell\mr{Tr}\big[\mr e^{-t_i \hat H}\big]\right],
\qquad t_1,\ldots,t_\ell>0,
\end{align}
which characterize the distribution of $\hat H$'s eigenvalues.
In Sections \ref{Section: Operator Limits} and \ref{Section: Number Rigidity},
we discuss
how the ability to compute \eqref{Equation: Laplace Transform} has led to applications
in the study of operator limits of random matrices and the occurence
of number rigidity in the spectrum of general random Schr\"odinger operators.

\subsection{Motivating Examples and Applications}

\subsubsection{The Anderson Hamiltonian and Parabolic Anderson Model}\label{Section: Anderson Hamiltonian}

The earliest occurrences of an operator of the form \eqref{Equation: Perturbed Operator} in the literature appear to be \cite{FrischLloyd,Halperin}.
The operator that is considered therein is the Anderson Hamiltonian, defined as $ A:=-\De+\xi$,
where $\xi$ is a Gaussian white noise.
The first mathematically rigorous study of this object
appeared in \cite{FukushimaNakao}. Following this, there have been several investigations of $ A$'s spectral
properties \cite{CambroneroMcKean,CambroneroRider,McKeanAH}, culminating in a recent article of Dumaz and Labb\'e
\cite{DumazLabbe}, which provides a comprehensive description of eigenfunction localization and eigenvalue Poisson
statistics in the case where $A$ acts on $I=(0,L)$ for large $L$.

In this context, the Feynman-Kac formula proved in this paper in the case ${\hat H}=A$
creates a rigorous connection between the study of
localization in the Anderson Hamiltonian and the study of intermittency for large times in the parabolic
Anderson model with continuous noise (c.f., \cite[(5) and (6)]{DumazLabbe}
and \cite[Sections 2.2.3--2.2.4]{KonigBook}). We recall that the parabolic Anderson model is the
SPDE
\[\partial_t u(t,x)=\De u(t,x)+\xi u(t,x),\qquad u(0,x)=u_0(x)\]
or, equivalently, $u(t,x):=\mr e^{t(\De+\xi)}u_0(x)$.
Although several previous works have featured Feynman-Kac-type
formulas for the continuum Anderson Hamiltonian or the parabolic Anderson model in one dimension (e.g., \cite[Sections 3--4 and Lemma A.1]{Chen14}
or \cite[Section 3]{HuEtAl}), ours appears to be the first to make an explicit connection
between $A$'s full spectrum and a Feynman-Kac functional of the form
\eqref{Equation: Intro Random Feynman-Kac}.

\subsubsection{Operator Limits of Random Matrices}\label{Section: Operator Limits}

One of the most widely studied example of an operator of the form \eqref{Equation: Perturbed Operator}
is the stochastic Airy operator:
\begin{align}
\label{Equation: SAO}
A_\be:=-\De+x+\xi_\be,\qquad \be>0,
\end{align}
where $\xi_\be$ is a Gaussian white noise with variance $4/\be$,
and $A_\be$ acts on $I=(0,\infty)$ with Dirichlet or Robin boundary condition at the origin.
The interest of studying this operator comes from the fact that its spectrum captures
the asymptotic edge fluctuations of a large class of random matrices and $\be$-ensembles.
This was first observed by Edelman and Sutton in \cite{EdelmanSutton}
and is based on the tridiagonal models of Dumitriu and Edelman \cite{DumitriuEdelman}. The connection
was later rigorously established by Ram\'irez, Rider, and Vir\'ag \cite{RamirezRiderVirag}, and these
developments gave rise to a now very extensive literature concerning operator limits of random matrices,
in which general operators of the form \eqref{Equation: Perturbed Operator} arise as the limits
of a large class of random tridiagonal matrices.
We refer to \cite{Virag} and references therein for a somewhat recent survey.

In \cite{GorinShkolnikov}, Gorin and Shkolnikov introduced an alternative method
of studying operator limits of random matrices by proving that
large powers of generalized Gaussian $\be$-ensembles admit an operator limit of the form
\eqref{Equation: Intro Random Feynman-Kac} (see \cite[(2.4)]{GorinShkolnikov}).
These results were later extended to rank 1 additive perturbations of Gaussian $\be$-ensembles in \cite{GaudreauLamarreShkolnikov}.
Since the Gaussian $\be$-ensembles
converge to the stochastic Airy operator, these results imply a Feynman-Kac
formula of the form \eqref{Equation: Intro Random Feynman-Kac} for $\mr e^{-t A_\be/2}$.
This new Feynman-Kac formula was then used to study the eigenvalues of $A_\be$
(see \cite[Corollary 2.3 and Proposition 2.6]{GorinShkolnikov} and
\cite[Theorem 1.11 and Corollary 1.13]{GaudreauLamarreShkolnikov}).

In this context, our paper can be viewed as providing a streamlined and unified treatment of trace class
semigroups generated by general operators of the form \eqref{Equation: Perturbed Operator}.
In \cite{GaudreauLamarre}, this more general setting is used to extend
the operator limit results in \cite{GaudreauLamarreShkolnikov,GorinShkolnikov}
to much more general random tridiagonal matrices, including some non-symmetric matrices that could not be treated by any previous method.

\subsubsection{Number Rigidity in Random Schr\"odinger Operators}\label{Section: Number Rigidity}

A point process is number rigid if the number of points inside any bounded set is determined by
the configuration of points outside that set. The earliest proof of number rigidity
appears to be the work of Aizenman and Martin in \cite{AizenmanMartin}. More recently,
there has been a notable increase of interest in this
property stemming from the work of Ghosh and Peres \cite{GhoshPeres}. Therein,
it is proved that the zero set of the planar Gaussian analytic function and the Ginibre process are number rigid.
Since then, number rigidity has been shown to be connected
to several other interesting properties of point processes (see, e.g.,
\cite[Section 1.2]{GaudreauLamarreGhosalLiao} and references therein).

Due to their ubiquity in mathematical physics, there is a strong incentive to understand any structure
that appears in the eigenvalues of random Schr\"odinger operators, including number rigidity. Up until
recently, the only random Schr\"odinger operator whose eigenvalue point process was known to be number rigid
was the stochastic Airy operator $A_\be$ in \eqref{Equation: SAO} with $\be=2$ \cite{Bufetov},
thanks to the special algebraic structure present in the eigenvalues of this particular object (i.e., $A_2$'s eigenvalues
generate the determinantal Airy-$2$ point process). In \cite{GaudreauLamarreGhosalLiao}, we use the Feynman-Kac
formula proved in this paper to show that
number rigidity occurs in the spectrum of $\hat H$
under very general assumptions on the domain $I$ on which the operator is defined,
the boundary conditions on that domain, the regularity of the potential $V$,
and the type of noise; thus providing the first method
capable of proving rigidity for general random Schr\"odinger operators.

\section{Main Results}\label{Section: Main Results}

In this section, we provide detailed statements of our main results.
Throughout this paper, we make the following assumption regarding
the interval $I$ on which the operator is defined and its boundary conditions.

\begin{assumption}\label{Assumption: DB}
We consider three different types of domains: The full space $I=\mbb R$ ({\bf Case 1}),
the positive half line $I=(0,\infty)$ ({\bf Case 2}), and the bounded interval $I=(0,b)$ for some $b>0$
({\bf Case 3}).

In Case 2, we consider Dirichlet and Robin boundary conditions
at the origin:
\begin{align}\label{Equation: Case 2 Boundary}
\begin{cases}
f(0)=0&\text{({\bf Case 2-D})}\\
f'(0)+\al f(0)=0&\text{({\bf Case 2-R})}
\end{cases}
\end{align}
where $\al\in\mbb R$ is fixed.

In Case 3, we consider the Dirichlet, Robin, and mixed boundary conditions at
the endpoints $0$ and $b$:
\begin{align}\label{Equation: Case 3 Boundary}
\begin{cases}
f(0)=f(b)=0&\text{({\bf Case 3-D})}\\
f'(0)+\al f(0)=-f'(b)+\be f(b)=0&\text{({\bf Case 3-R})}\\
f'(0)+\al f(0)=f(b)=0&\text{({\bf Case 3-M})}
\end{cases}
\end{align}
where $\al,\be\in\mbb R$ are fixed.
\end{assumption}

\begin{remark}
Case 3-M should technically also include the mixed boundary conditions of the form
$f(0)=-f'(b)+\be f(b)=0$.
However, the latter can easily be
obtained from Case 3-M by considering the transformation $x\mapsto f(b-x)$.
\end{remark}

Throughout the paper,
we make the following assumption on the potential $V$.

\begin{assumption}\label{Assumption: PG}
Suppose that $V:I\mapsto\mbb R$ is nonnegative and locally integrable on $I$'s closure.
If $I$ is unbounded, then we also assume that
\begin{align}\label{Equation: Assumptions}
\liminf_{x\to\pm\infty}\frac{V(x)}{\log|x|}=\infty.
\end{align}
\end{assumption}

\begin{remark}
As is usual in the theory of Schr\"odinger operators and semigroups, the assumption that $V\geq0$
is made for technical ease, and all of our results also apply in the case where $V$ is merely bounded from
below on $I$.
\end{remark}

\subsection{Self-Adjoint Operator}

Our first result concerns the realization of ${\hat H}$ as a self-adjoint operator.
As explained
in the passage following equation \eqref{Equation: Intro Quadratic Form}, this is done through
a sesquilinear form. We begin by introducing the sesquilinear form associated with $H$:

\begin{definition}\label{Definition: Forms}
Let $L^2=L^2(I)$ denote the set of square integrable functions
(equivalence classes up to measure zero) on $I$,
with its usual inner product and norm
\[\langle f,g\rangle:=\int_If(x)g(x)\d x,\qquad\|f\|_2:=\sqrt{\langle f,f\rangle}.\]
Let $\mr{AC}=\mr{AC}(I)$ denote the set of functions that are locally absolutely
continuous on $I$'s closure, and let
\[\mr H^1_V=\mr H^1_V(I):=\big\{f\in\mr{AC}:\|f\|_2,\|f'\|_2,\|V^{1/2}f\|_2<\infty\big\}.\]
We define the following inner product and norm on $\mr H^1_V$:
\[\langle f,g\rangle_*:=\langle f',g'\rangle+\langle fg,V+1\rangle,\qquad
\|f\|_*^2:=\|f'\|_2^2+\|V^{1/2}f\|_2^2+\|f\|_2^2.\]
We define $H$'s sesquilinear form $\mc E$ as well as its
form domain $D(\mc E)\subset\mr H^1_V$ for every case in Assumption \ref{Assumption: DB}
as follows:
\begingroup
\allowdisplaybreaks
\begin{align*}
\textbf{Case 1: }&\quad\begin{cases}
D(\mc E):=\mr H^1_V\\
\mc E(f,g):=\tfrac12\langle f',g'\rangle+\langle fg,V\rangle\\
\end{cases}\\
\textbf{Case 2-D: }&\quad\begin{cases}
D(\mc E):=\big\{f\in \mr H^1_V:f(0)=0\big\}\\
\mc E(f,g):=\tfrac12\langle f',g'\rangle+\langle fg,V\rangle\\
\end{cases}\\
\textbf{Case 2-R: }&\quad\begin{cases}
D(\mc E):=\mr H^1_V\\
\mc E(f,g):=\tfrac12\langle f',g'\rangle-\tfrac\al2 f(0)g(0)+\langle fg,V\rangle\\
\end{cases}\\
\textbf{Case 3-D: }&\quad\begin{cases}
D(\mc E):=\big\{f\in \mr H^1_V:f(0)=f(b)=0\big\}\\
\mc E(f,g):=\tfrac12\langle f',g'\rangle+\langle fg,V\rangle\\
\end{cases}\\
\textbf{Case 3-R: }&\quad\begin{cases}
D(\mc E):=\mr H^1_V\\
\mc E(f,g):=\tfrac12\langle f',g'\rangle-\tfrac\al2 f(0)g(0)-\tfrac\be2 f(b)g(b)+\langle fg,V\rangle\\
\end{cases}\\
\textbf{Case 3-M: }&\quad\begin{cases}
D(\mc E):=\big\{f\in \mr H^1_V:f(b)=0\big\}\\
\mc E(f,g):=\tfrac12\langle f',g'\rangle-\tfrac\al2 f(0)g(0)+\langle fg,V\rangle\\
\end{cases}
\end{align*}
\endgroup
\end{definition}

\begin{remark}
As noted by Bloemendal and Vir\'ag in \cite[Remark 2.5 and
(2.11)]{BloemendalVirag},
the Dirichlet boundary conditions can be specified in the
form domain $D(\mc E)$, but the Robin conditions must be enforced by the form itself,
since the derivative of an absolutely continuous function is only defined almost everywhere.
Taking Case 3-R as an example, by a formal integration by parts we have
\[-\int_0^bf(x)g''(x)\d x=f(b)\big(-g'(b)\big)+f(0)g'(0)+\langle f',g'\rangle.\]
Substituting $g'(0)=-\al g(0)$ and $-g'(b)=-\be g(0)$ then yields $\mc E(f,g)$.
\end{remark}

We now define the form associated with $\hat H$ as a random perturbation of $\mc E$
coming from the noise. We assume that the Gaussian process $\Xi$ driving the noise is as follows:

\begin{assumption}\label{Assumption: FN}
$\Xi:\mbb R\to\mbb R$ is a centered Gaussian process such that:
\begin{enumerate}
\item Almost surely, $\Xi(0)=0$ and $\Xi$ has continuous sample paths.
\item $\Xi$ has stationary increments, that is, for every $x_1,\ldots,x_\ell,y_1,\ldots,y_\ell\in\mbb R$
($\ell\in\mbb N$) such that $x_i\leq y_i$ for all $1\leq i\leq\ell$ and $h\in\mbb R$, the increments
\[\Xi(y_1)-\Xi(x_1),\Xi(y_2)-\Xi(x_2),\ldots,\Xi(y_\ell)-\Xi(x_\ell)\]
have the same joint distribution as the shifted increments
\[\Xi(y_1+h)-\Xi(x_1+h),\Xi(y_2+h)-\Xi(x_2+h),\ldots,\Xi(y_\ell+h)-\Xi(x_\ell+h).\]
\end{enumerate}
\end{assumption}

We may now define $\xi$ as the distributional derivative of $\Xi$:

\begin{definition}
\label{Definition: Operator Noise}
Let $\mr C^\infty_0=\mr C^\infty_0(I)$ denote the set of functions
that are smooth and compactly supported on $I$'s closure.
For every $f\in\mr C^\infty_0$, we define $\xi(f)=\langle f,\Xi'\rangle$ by a
formal integration by parts:
\begin{align}
\label{Equation: Operator Noise}
\xi(f):=\begin{cases}
-\langle f',\Xi\rangle&\text{(Cases 1 and 2)}\\
f(b)\Xi(b)-\langle f',\Xi\rangle&\text{(Case 3)}
\end{cases}
\end{align}
(note that we omit the boundary term $-f(0)\Xi(0)$ in the formal integration by parts for Cases 2 and 3,
since we assume that $\Xi(0)=0$).
\end{definition}

Our first result is that the sesquilinear form $(f,g)\mapsto\xi(fg)$ for $f,g\in\mr C^\infty_0$ can be continuously extended to
$\mr H^1_V$, and thus can be added to the form $\mc E$ as hinted at in \eqref{Equation: Intro Quadratic Form}:

\begin{proposition}\label{Proposition: Noise Continuity}
Suppose that Assumptions \ref{Assumption: DB}, \ref{Assumption: PG}, and
\ref{Assumption: FN} hold. 
There exists a finite random variable $c>0$ such that, almost surely,
\begin{align}
\label{Equation: Noise Continuity}
|\xi(f^2)|\leq c\|f\|_*^2,\qquad f\in\mr C^\infty_0.
\end{align}
Hence $f\mapsto\xi(f^2)$ extends uniquely to a continuous
quadratic form on $\mr H^1_V$ that satisfies \eqref{Equation: Noise Continuity}
for all $f\in\mr H^1_V$,
which we can then also extend to a sesquilinear form by the polarization identity:
\[\xi(fg):=\frac{\xi\big((f+g)^2\big)-\xi\big((f-g)^2\big)}{4},\qquad f,g\in\mr H^1_V.\]
In particular, almost surely, we can
define the sesquilinear form
\begin{align}
\label{Equation: E Hat}
\hat{\mc E}(f,g):=\mc E(f,g)+\xi(fg)
\end{align}
on the same form domain as $\mc E$, that is, for all
for all $f,g\in D(\mc E)$.
\end{proposition}

We may now state our main result regarding our definition of
$\hat H$ as the self-adjoint operator associated with the form
\eqref{Equation: E Hat} on the form domain $D(\mc E)$:

\begin{proposition}\label{Proposition: Operator}
Suppose that Assumptions \ref{Assumption: DB}, \ref{Assumption: PG}, and
\ref{Assumption: FN} hold.
Almost surely, there exists a unique self-adjoint
operator ${\hat H}$ with dense domain $D({\hat H})\subset L^2$ such that
\begin{enumerate}
\item $D({\hat H})\subset D(\mc E)$;
\item For every $f,g\in D({\hat H})$, one has $\langle f,{\hat H}g\rangle=\hat{\mc E}(f,g)$; and
\item ${\hat H}$ has compact resolvent.
\end{enumerate}
\end{proposition}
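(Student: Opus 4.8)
The plan is to realize $\hat H$ via the classical theory of closed, lower-semibounded sesquilinear forms (the first representation theorem, e.g.\ Kato or Reed--Simon). The argument has three parts, and the entire burden lies in the first one.

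First, the central step is to establish the key \emph{form bound}: there exist (random) constants $\varepsilon\in(0,\tfrac12)$ and $C<\infty$ such that
\begin{align*}
|\xi(f^2)|\le \varepsilon\|f'\|_2^2+\varepsilon\|V^{1/2}f\|_2^2+C\|f\|_2^2\qquad\text{for all }f\in D(\mc E).
\end{align*}
(This is exactly the content of Proposition~\ref{Proposition: Form Bound} alluded to in the remarks, and the harder of the two ingredients it must supply; the other is that $\xi(fg)$ extends continuously to $D(\mc E)$, which follows by polarization from a quadratic bound of the same type together with a Cauchy--Schwarz/density argument.) To prove the bound, I would write $\xi(f^2)=-2\langle ff',\Xi\rangle$ (modulo the boundary term in Case~3-R) and estimate $|\langle ff',\Xi\rangle|$ by splitting $I$ into unit intervals $J_n=(n,n+1)\cap I$. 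On each $J_n$, $\|\Xi\|_{L^\infty(J_n)}$ is an a.s.\ finite random variable, and by the stationary-increments and continuity hypotheses (Assumption~\ref{Assumption: FN}) its growth in $n$ is at most polynomial --- more precisely, one shows $\sup_n \|\Xi\|_{L^\infty(J_n)}/\log|n|<\infty$ a.s., say via a Borel--Cantelli argument using Gaussian tail bounds on the modulus of continuity (Fernique/Borell–TIS). Then on $J_n$,
\begin{align*}
\Big|\int_{J_n}ff'\Big|\le \|\Xi\|_{L^\infty(J_n)}\Big(\tfrac1{2\delta}\|f\|_{L^2(J_n)}^2+\tfrac{\delta}{2}\|f'\|_{L^2(J_n)}^2\Big),
\end{align*}
and summing over $n$, the $\|f'\|_2^2$ term is absorbed by choosing $\delta$ small, while the $\|f\|_{L^2(J_n)}^2$ terms, each weighted by $\lesssim\log|n|$, are dominated using Assumption~\ref{Assumption: PG} (namely $V(x)/\log|x|\to\infty$): outside a compact set $\log|n|\le \varepsilon V(x)$ on $J_n$, so those tail terms are absorbed into $\varepsilon\|V^{1/2}f\|_2^2$, and the finitely many remaining terms contribute to $C\|f\|_2^2$. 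In the bounded-interval cases this is immediate since everything is finite; the real work is the unbounded cases, and this is where Assumption~\ref{Assumption: PG}'s super-logarithmic growth is used essentially.

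Second, with the form bound in hand, $\hat{\mc E}$ is a perturbation of the deterministic form $\mc E$ by something that is relatively form-bounded with bound $<1$. Since $\mc E$ is closed and nonnegative on $D(\mc E)$ (standard: it is the form of the self-adjoint operator $H$ with the prescribed boundary conditions, the Robin boundary terms being bounded relative perturbations as well), the KLMN theorem yields that $\hat{\mc E}$ is closed and lower-semibounded on the same domain $D(\mc E)$. The first representation theorem then produces a unique self-adjoint operator $\hat H$ with $D(\hat H)\subset D(\mc E)$ satisfying $\langle f,\hat H g\rangle=\hat{\mc E}(f,g)$ for $f\in D(\mc E)$, $g\in D(\hat H)$; this gives conclusions (1) and (2), and $D(\hat H)$ is dense by construction.

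Third, for (3) — compact resolvent — I would argue that $H$ itself has compact resolvent under Assumption~\ref{Assumption: PG}: this is the classical criterion that a Schrödinger operator on an interval with $V$ locally integrable, bounded below, and $V(x)\to\infty$ has discrete spectrum, proved by showing the form domain $D(\mc E)$ embeds compactly into $L^2(I)$ (Rellich-type argument: on each bounded piece $H^1\hookrightarrow L^2$ is compact, and the tails are controlled because $\|f\|_{L^2(|x|>R)}^2\le (\inf_{|x|>R}V)^{-1}\|V^{1/2}f\|_2^2\to0$ uniformly on form-bounded sets). Since the form bound shows $\hat{\mc E}+C$ is comparable to $\mc E+1$ on $D(\mc E)$ (equivalent norms), the same compact embedding $D(\hat{\mc E})\hookrightarrow L^2$ holds, which is equivalent to $\hat H$ having compact resolvent. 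I expect the form bound of the first step to be the main obstacle; the rest is an application of standard functional-analytic machinery once the relative bound and the coercivity/compactness of $\mc E$ are secured.
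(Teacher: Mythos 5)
There is a genuine gap in the central step (the form bound), and it is precisely the place where the paper's argument is subtle.

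Your claim that $\sup_n \|\Xi\|_{L^\infty(J_n)}/\log|n|<\infty$ a.s.\ is false under Assumption~\ref{Assumption: FN}. The process $\Xi$ has \emph{stationary increments}, not stationary values, so it is generically of polynomial growth: for white noise (so $\Xi$ is a two-sided Brownian motion) the law of the iterated logarithm gives $\|\Xi\|_{L^\infty(J_n)}\asymp \sqrt{|n|\log\log|n|}$, which dwarfs $\log|n|$. Consequently your unit-interval estimate
\[
\Big|\int_{J_n} ff'\,\Xi\Big|\le\|\Xi\|_{L^\infty(J_n)}\Big(\tfrac1{2\delta}\|f\|_{L^2(J_n)}^2+\tfrac{\delta}{2}\|f'\|_{L^2(J_n)}^2\Big)
\]
cannot be summed to give a $\theta\|f'\|_2^2$ term with $\theta$ small: either $\delta$ is uniform and the $\|f'\|^2$ coefficient blows up along $n$, or $\delta\sim\theta/\|\Xi\|_{L^\infty(J_n)}$ and the $\|f\|^2$ coefficient becomes $\|\Xi\|_{L^\infty(J_n)}^2/\theta\gtrsim |n|$, which Assumption~\ref{Assumption: PG} (super-$\log$ growth of $V$) is far too weak to absorb. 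The growth hypothesis in the paper is calibrated to $\sqrt{\log}$ fluctuations, and that calibration is only achieved after a further decomposition.

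The missing idea, following \cite{Minami,RamirezRiderVirag}, is an integration by parts against a one-step average. Set $\tilde\Xi(x):=\int_x^{x+1}\Xi(y)\,\mathrm dy$ and write $\Xi=\tilde\Xi+(\Xi-\tilde\Xi)$, so that
\[
\xi(f^2)=-2\langle f'f,\Xi\rangle=\langle f^2,\tilde\Xi'\rangle+2\langle f'f,\tilde\Xi-\Xi\rangle.
\]
Both $\tilde\Xi'(x)=\Xi(x+1)-\Xi(x)$ and $\Xi(x)-\tilde\Xi(x)=-\int_0^1(\Xi(x+u)-\Xi(x))\,\mathrm du$ are functions of increments only, hence \emph{stationary} continuous centered Gaussian processes; the Borell--TIS/Borel--Cantelli argument you invoke then does apply to \emph{these}, yielding the a.s.\ bounds $|\tilde\Xi'(x)|,\ (\Xi(x)-\tilde\Xi(x))^2\le C\log(2+|x|)$. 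The first term $\langle f^2,\tilde\Xi'\rangle$ involves no $f'$ and is absorbed directly into $\theta\|V^{1/2}f\|_2^2+c\|f\|_2^2$; in the second term the $\sqrt{\log}$ weight lets the usual $ab\le\tfrac12(a^2+b^2)$ trade generate a fixed small multiple of $\|f'\|_2^2$ plus a $V$-absorbable piece. Your steps two and three (KLMN plus the representation theorem, and compact resolvent via form-domain compactness / the variational comparison with $H$) agree with the paper's route and are fine once the corrected form bound is in place.
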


\begin{remark}
In Case 1, the statement of Proposition \ref{Proposition: Operator} is to the best
of our knowledge completely new. In Case 2, the closest results are \cite[Theorem 2]{Minami},
which assumes that $I=(0,\infty)$ with Dirichlet boundary condition,
that $V$ is continuous,
and that $\Xi$ is a fractional Brownian motion.
In Case 3, the closest result seems to
be \cite[\S2]{FukushimaNakao}, which only considers the case $V=0$ with Dirichlet boundary conditions
and $\Xi$ a Brownian motion. Proposition \ref{Proposition: Noise Continuity} is a generalization
of similar results in \cite[Lemma 2.3]{BloemendalVirag}, \cite[Proposition 1-(i)]{Minami},
and \cite[Proposition 2.4]{RamirezRiderVirag}.
\end{remark}

An immediate corollary of Proposition \ref{Proposition: Operator} is the ability
to study the spectrum of $\hat H$ using the variational characterization coming
from the form $\hat{\mc E}$:

\begin{definition}
Let $ A$ be a semi-bounded self-adjoint operator with discrete spectrum. We use $\la_1( A)\leq\la_2( A)\leq\cdots$
to denote the eigenvalues of $ A$ in increasing order, and we use $\psi_1( A),\psi_2( A),\ldots$ to denote
the associated eigenfunctions.
\end{definition}

\begin{corollary}\label{Corollary: Obvious Form Expansion}
Under the assumptions of Proposition \ref{Proposition: Operator}, almost surely,
\begin{enumerate}
\item $-\infty<\la_1(\hat H)\leq\la_2(\hat H)\leq\cdots\nearrow+\infty$;
\item the $\psi_k(\hat H)$ form an orthonormal basis of $L^2$; and
\item for every $k\in\mbb N$,
\[\la_k(\hat H)=\inf_{\psi\in D(\mc E),~\psi\perp\psi_1(\hat H),\ldots,\psi_{k-1}(\hat H)}
\frac{\hat{\mc E}(\psi,\psi)}{\|\psi\|_2^2},\]
with $\psi_k(\hat H)$ being the minimizer of the above infimum
with unit $L^2$ norm.
\end{enumerate}
\end{corollary}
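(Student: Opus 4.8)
The plan is to deduce Corollary \ref{Corollary: Obvious Form Expansion} directly from Proposition \ref{Proposition: Operator} together with the abstract spectral theory of self-adjoint operators with compact resolvent, so that essentially no new estimates are needed. First I would recall that a self-adjoint operator $\hat H$ with compact resolvent has purely discrete spectrum accumulating only at $+\infty$, provided it is bounded below; and that, since $\hat H$ is associated with the closed symmetric sesquilinear form $\hat{\mc E}$ on the form domain $D(\mc E)=\mr H^1_V$, it is automatically bounded below once $\hat{\mc E}$ is. The lower bound on $\hat{\mc E}$ (hence $\la_1(\hat H)>-\infty$) is exactly the content of the form bound established in Proposition \ref{Proposition: Form Bound}: the noise term $\xi(fg)=-\langle f'g+fg',\Xi\rangle$ is controlled by an $\eps$-fraction of $\tfrac12\|f'\|_2^2+\|V^{1/2}f\|_2^2$ plus a constant multiple of $\|f\|_2^2$, so $\hat{\mc E}(f,f)\ge -C\|f\|_2^2$ for some (random) $C$. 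This gives item (1), the eigenvalues being listed with multiplicity and tending to $+\infty$ because the resolvent is compact.

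Next, for item (2), the spectral theorem for self-adjoint operators with compact resolvent yields an orthonormal basis of $L^2$ consisting of eigenfunctions $\psi_k(\hat H)$; the only thing to check is that ``eigenfunction of $\hat H$ in the operator sense'' coincides with ``minimizer in the variational sense,'' which is immediate from part (2) of Proposition \ref{Proposition: Operator} since $\hat H\psi_k=\la_k\psi_k$ with $\psi_k\in D(\hat H)\subset D(\mc E)$. For item (3), I would invoke the Courant–Fischer (min–max) principle in its form-theoretic version: for a lower-semibounded self-adjoint operator $A$ with compact resolvent, associated with a closed form $\mf q$ with form domain $Q$,
\[
\la_k(A)=\min_{\substack{S\subset Q\\ \dim S=k}}\ \max_{0\ne\psi\in S}\frac{\mf q(\psi,\psi)}{\|\psi\|_2^2}
=\inf_{\substack{\psi\in Q\\ \psi\perp\psi_1(A),\ldots,\psi_{k-1}(A)}}\frac{\mf q(\psi,\psi)}{\|\psi\|_2^2},
\]
with the infimum attained at $\psi_k(A)$. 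Applying this with $A=\hat H$, $\mf q=\hat{\mc E}$, and $Q=\mr H^1_V$ gives precisely the stated formula, including the fact that the $L^2$-normalized minimizer is $\psi_k(\hat H)$.

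The proof is therefore almost entirely a citation of standard functional analysis (e.g.\ \cite[Theorems XIII.64 and XIII.2]{Simon} or the analogous statements in Reed–Simon), and the only genuine input is that $\hat H$ is self-adjoint, bounded below, and has compact resolvent — all of which are asserted by Proposition \ref{Proposition: Operator} and underpinned by the form bound of Proposition \ref{Proposition: Form Bound}. Consequently, I do not anticipate a serious obstacle; the one point requiring a sentence of care is that the min–max principle quoted for operators must be transferred to the form level, i.e.\ one must know that the form $\hat{\mc E}$ is closed on $\mr H^1_V$ and that its associated operator is $\hat H$, which is again part of the construction in Proposition \ref{Proposition: Operator}. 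Once that identification is in hand, items (1)–(3) follow in one stroke, so I would present the corollary's proof as a short paragraph invoking the spectral theorem and the variational principle for closed lower-semibounded forms.
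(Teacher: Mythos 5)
Your proof is correct and takes essentially the same route as the paper: the paper presents this corollary as an \emph{immediate} consequence of Proposition \ref{Proposition: Operator} with no further argument, and you supply exactly the standard spectral-theoretic ingredients that make it so (lower semiboundedness via the form bound of Proposition \ref{Proposition: Form Bound}, discrete spectrum accumulating only at $+\infty$ from compact resolvent plus the spectral theorem, and the form-level Courant--Fischer principle for the variational characterization). There is no gap and no divergence from the intended argument.
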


\subsection{Semigroup}

We now state our main result regarding the Feynman-Kac formula for the semigroup generated by ${\hat H}$.
Thanks to Proposition \ref{Proposition: Operator} and Corollary \ref{Corollary: Obvious Form Expansion},
we know that under Assumptions  \ref{Assumption: DB}, \ref{Assumption: PG},
and \ref{Assumption: FN},
the semigroup of $\hat H$ is the family of
bounded self-adjoint operators with spectral expansions
\begin{align}
\label{Equation: H hat Semigroup Expansion}
\mr e^{-t\hat H}f=\sum_{k=1}^\infty\mr e^{-t\la_k({\hat H})}\langle\psi_k({\hat H}),f\rangle\psi_k({\hat H}),
\qquad t>0,~f\in L^2.
\end{align}
In order to state our Feynman-Kac formula for $\mr e^{-t\hat H}$,
we introduce some notations and further assumptions.

\subsubsection{Preliminary Definitions}

We begin with some preliminary definitions
regarding the covariance of the noise $\xi$
and the stochastic processes required to
define our Feynman-Kac kernels.

\begin{definition}[Covariance]\label{Definition: Covariance}
Let us denote by $\mr{PC}_c=\mr{PC}_c(I)$ the set of functions $f:I\mapsto\mbb R$ that are
c\`adl\`ag and compactly supported on $I$'s closure.
We say that $f\in\mr{PC}_c$ is a step function if it can be written as
\begin{align}
\label{Equation: Step Function}
f=\sum_{i=1}^kc_i\mbf 1_{[x_i,x_{i+1})}\qquad c_i\in\mbb R,~-\infty<x_1< x_2<\cdots<x_{k+1}<\infty.
\end{align}
To simplify forthcoming definitions and statements, we often extend the domain of $f\in\mr{PC}_c$
to $\mbb R$, with the convention that $f(x)=0$ for all $x$ outside of $I$'s closure (noting, however,
that $f$'s extension need not be c\`adl\`ag on all of $\mbb R$).

Let $\ga:\mr{PC}_c\to\mbb R$ be an even almost-everywhere-defined function or Schwartz distribution
(even in the sense that $\langle f,\ga\rangle=\langle \mf r f,\ga\rangle$ for every $f$,
where $\mf rf(x)=f(-x)$ denotes the reflection map),
such that the bilinear map
\begin{align}
\label{Equation: Semi-Inner-Product}
\langle f,g\rangle_\ga:=\int_{\mbb R^2}f(x)\ga(x-y)g(y)\d x\dd y,\qquad f,g\in \mr{PC}_c
\end{align}
is a semi-inner-product.
We denote the seminorm induced by \eqref{Equation: Semi-Inner-Product} as
\[\|f\|_\ga:=\sqrt{\langle f,f\rangle}_\ga,\qquad f\in\mr{PC}_c.\]
\end{definition}

\begin{remark}
If $\ga$ is not an almost-everywhere-defined function, then
the integral over $\ga(x-y)$ in \eqref{Equation: Semi-Inner-Product} may not be well defined.
In such cases, we rigorously interpret \eqref{Equation: Semi-Inner-Product}
as $\langle f*\mf r g,\ga\rangle=\langle \mf r f*g,\ga\rangle$.
\end{remark}

\begin{definition}[Stochastic Processes, etc.]\label{Definition: Stochastic Processes Etc}
We use $B$ to denote a standard Brownian motion on $\mbb R$,
$X$ to denote a reflected standard Brownian motion on $(0,\infty)$,
and $Y$ to denote a reflected standard Brownian motion on $(0,b)$.

Let $Z=B$, $X$, or $Y$.
For every $t>0$ and $x,y\in I$, we
define the conditioned processes
\[Z^x:=\big(Z|Z(0)=x\big)\qquad\text{and}\qquad Z^{x,y}_t:=\big(Z|Z(0)=x\text{ and }Z(t)=y\big),\]
and we use $\mbf E^x$ and $\mbf E^{x,y}_t$ to denote the expected value with respect to the law of
$Z^x$ and $Z^{x,y}_t$, respectively.

We denote the Gaussian kernel by
\[\ms G_t(x):=\frac{\mr e^{-x^2/2t}}{\sqrt{2\pi t}},\qquad t>0,~x\in\mbb R.\]
We denote the transition kernels of $B$, $X$, and $Y$ as $\Pi_B$, $\Pi_X$, and $\Pi_Y$,
respectively. That is, for every $t>0$,
\begin{align*}
\Pi_B(t;x,y)&:=\ms G_t(x-y)&x,y\in\mbb R,\\
\Pi_X(t;x,y)&:=\ms G_t(x-y)+\ms G_t(x+y)&x,y\in(0,\infty),\\
\Pi_Y(t;x,y)&:=\sum_{z\in2b\mbb Z\pm y}\ms G_t(x-z)&x,y\in(0,b).
\end{align*}

Let $Z=B$, $X$, or $Y$.
For any time interval $[u,v]\subset [0,\infty)$, we let $a\mapsto L^a_{[u,v]}(Z)$ ($a\in I$) denote the continuous version
of the local time of $Z$
(or its conditioned versions) on $[u,v]$, that is,
\begin{align}
\label{Equation: Interior Local Time}
\int_u^vf\big(Z(s)\big)\d s=\int_I L^a_{[u,v]}(Z)f(a)\d a=\langle L_{[u,v]}(Z),f\rangle
\end{align}
for any measurable function $f:I\to\mbb R$. In the special case where $u=0$ and $v=t$, we use the shorthand $L_t(Z):=L_{[0,t]}(Z)$.
When there may be ambiguity regarding which conditioning of $Z$ is under consideration, we use
$L_{[u,v]}(Z^x)$ and $L_{[u,v]}(Z^{x,y}_t)$.

As a matter of convention, if $Z=X$ or $Y$, then we distinguish the boundary local time from the above, which we define as
\[\mf L^c_{[u,v]}(Z):=\lim_{\eps\to0}\frac1{2\eps}\int_u^v\mbf 1_{\{c-\eps<Z(s)<c+\eps\}}\d s\]
for $c\in\partial I$ (i.e., $c=0$ if $Z=X$ or $c\in\{0,b\}$ if $Z=Y$),
with $\mf L_t^c(Z):=\mf L_{[0,t]}^c(Z)$.
\end{definition}

\begin{remark}
Since we use the continuous version of local time,
$a\mapsto L^a_{[u,v]}(Z)$ is continuous and compactly
supported on $I$'s closure; in particular, $L_{[u,v]}(Z)\in\mr{PC}_c$.
\end{remark}

\subsubsection{Noise}

We now articulate the assumptions that
the noise $\xi$ must satisfy for our Feynman-Kac formula to hold.
We recall from the introduction that we think of $\xi$ as a centered Gaussian process
with covariance $\mbf E[\xi(x)\xi(y)]=\ga(x-y)$, with $\ga$ as in
Definition \ref{Definition: Covariance}.
Interpreting $\xi(f)``="\int_\mbb R f(x)\xi(x)\d x$ for a function $f$,
this suggests that, as a random Schwartz distribution, $\xi$ is a centered Gaussian process with covariance
$\mbf E\left[\xi(f)\xi(g)\right]=\langle f,g\rangle_\ga.$
In similar fashion to Assumption \ref{Assumption: FN}, we want to interpret
$\xi$ as the distributional derivative of some continuous process $\Xi$,
that is, corresponding to \eqref{Equation: Anti-Derivative Process}.
If $\xi$'s covariance is given by the semi-inner-product
$\langle\cdot,\cdot\rangle_\ga$,
then this suggests that $\Xi$'s covariance is equal to
\begin{align}
\label{Equation: Xi Covariance}
\mbf E[\Xi(x)\Xi(y)]=\begin{cases}
\langle\mbf 1_{[0,x)},\mbf 1_{[0,y)}\rangle_\ga&\text{if }0\leq x,y\\
\langle\mbf 1_{[0,x)},-\mbf 1_{[y,0)}\rangle_\ga&\text{if }y\leq0\leq x\\
\langle-\mbf 1_{[x,0)},\mbf 1_{[0,y)}\rangle_\ga&\text{if }x\leq0\leq y\\
\langle\mbf 1_{[x,0)},\mbf 1_{[y,0)}\rangle_\ga&\text{if }x,y\leq 0.
\end{cases}
\end{align}
This leads us to the following Assumption:

\begin{assumption}
\label{Assumption: SN}
The centered Gaussian process
$\Xi:\mbb R\to\mbb R$ satisfies Assumption \ref{Assumption: FN}.
Moreover, there exists a $\ga:\mr{PC}_c\to\mbb R$ as in
Definition \ref{Definition: Covariance} that
satisfies the following conditions.
\begin{enumerate}
\item $\Xi$'s covariance is given by \eqref{Equation: Xi Covariance}.
\item There exists $c_\ga>0$ and $1\leq q_1,\ldots,q_\ell\leq2$ (for some $\ell\in\mbb N$) such that
\begin{align}
\label{Equation: gamma to Lp Bounds}
\|f\|_{\ga}^2\leq c_\ga\big(\|f\|_{q_1}^2+\cdots+\|f\|_{q_\ell}^2\big),\qquad f\in\mr{PC}_c,
\end{align}
where $\|f\|_q:=\left(\int_\mbb R|f(x)|^q\d x\right)^{1/q}$ denotes the usual $L^q$ norm.
\end{enumerate}
Then, for every $f\in\mr{PC}_c$, we define
\begin{align}
\label{Equation: Stochastic Integral}
\xi(f):=\int_\mbb R f(x)\d\Xi(x),
\end{align}
where $\dd\Xi$ denotes stochastic integration with respect to $\Xi$ interpreted in the
pathwise sense of Karandikar \cite{Karandikar} (see Section \ref{Section: Stochastic Integral}
for the details of this construction).
\end{assumption}

\begin{remark}
Though this is not immediately obvious from the above definition,
the pathwise stochastic integral \eqref{Equation: Stochastic Integral}
actually coincides with \eqref{Equation: Operator Noise}
for every $f\in\mr C^\infty_0$. We note,
however, that the extension of $\xi$ to $\mr{PC}_c$
need not be linear on all of $\mr{PC}_c$, and thus may not
be a Schwartz distribution in the proper sense on that larger domain.
Our interest in defining the stochastic integral in
a pathwise sense is that it allows to construct $\xi$
as a random map from $\mr{PC}_c$ to $\mbb R$
that satisfies the following properties.
\begin{enumerate}
\item We can consider the conditional distribution of $\xi\big(L_t(Z)\big)$
given a fixed realization of $\Xi$,
assuming independence between $Z$ and $\Xi$.
\item $f\mapsto\xi(f)$ is a centered Gaussian process
on $\mr{PC}_c$ with covariance $\langle \cdot,\cdot\rangle_\ga$.
\end{enumerate}
In fact, any other pathwise stochastic integral that is an extension of
\eqref{Equation: Operator Noise} and satisfies these two properties
leads to the same statement in Theorem \ref{Theorem: Semigroup} below.
We point to Section \ref{Section: Stochastic Integral}
and Appendix \ref{Appendix: Measurable} for the details of the proof that
$\xi$ has these two properties, and to Section \ref{Section: Smooth Approximations}
for an explanation of why any stochastic integral having these two properties gives
rise to our main result.
\end{remark}

\begin{remark}
\label{Remark: Redundancy}
The requirement that $\Xi$ be a continuous process
with stationary increments in Assumption \ref{Assumption: SN}
is redundant: Firstly, the covariance \eqref{Equation: Xi Covariance}
implies that $\Xi(x)-\Xi(y)$ corresponds to $\xi(\mbf 1_{[x,y)})$,
which is stationary since the semi-inner-product $\langle\cdot,\cdot\rangle_\ga$
is translation invariant. Secondly, if we construct $\Xi$ using
abstract existence theorems for Gaussian processes
(which is possible since $\langle\cdot,\cdot\rangle_\ga$ is a semi-inner-product),
then the assumption
\eqref{Equation: gamma to Lp Bounds} implies that $\Xi$ has
a continuous version by Kolmogorov's theorem for path continuity
(see Section \ref{Section: Examples Outline} for details). We nevertheless state
these properties as assumptions for clarity.
\end{remark}

\subsubsection{Feynman-Kac Kernels}

We now introduce the Feynman-Kac kernels that describe $\hat H$'s semigroup.

\begin{definition}\label{Definition: Random Semigroup}
In Cases 2 and 3, let us define the quantities
\[\bar\al:=\begin{cases}
-\infty&\text{(Case 2-D)}\\
\al&\text{(Case 2-R)}
\end{cases}
\qquad
(\bar\al,\bar\be):=\begin{cases}
(-\infty,-\infty)&\text{(Case 3-D)}\\
(\al,\be)&\text{(Case 3-R)}\\
(\al,-\infty)&\text{(Case 3-M)}
\end{cases}\]
where $\al,\be\in\mbb R$ are as in \eqref{Equation: Case 2 Boundary} and \eqref{Equation: Case 3 Boundary}.
For every $t>0$, we define the (random) kernel ${\hat K}(t):I^2\to\mbb R$ as
\begin{align}\label{Equation: Random Semigroup}
{\hat K}(t;x,y):=\begin{cases}
\Pi_B(t;x,y)\,\mbf E^{x,y}_t\Big[\mr e^{-\langle L_t(B),V\rangle-\xi(L_t(B))}\Big]&\text{(Case 1)}
\vspace{5pt}
\\
\Pi_X(t;x,y)\,\mbf E^{x,y}_t\Big[\mr e^{-\langle L_t(X),V\rangle-\xi(L_t(X))+\bar\al \mf L^0_t(X)}\Big]&\text{(Case 2)}
\vspace{5pt}
\\
\Pi_Y(t;x,y)\,\mbf E^{x,y}_t\Big[\mr e^{-\langle L_t(Y),V\rangle-\xi(L_t(Y))+\bar\al \mf L^0_t(Y)+\bar\be \mf L^b_t(Y)}\Big]&\text{(Case 3)}
\end{cases}
\end{align}
where we assume that $\Xi$ is independent of $B$, $X$, or $Y$,
and $\mbf E^{x,y}_t$ denotes the expected value conditional on $\Xi$.
\end{definition}

\begin{remark}\label{Remark: Infinity Times Zero}
Let $Z=X$ or $Y$.
In the above definition, we use the convention
\[-\infty\cdot \mf L^c_t(Z)=\begin{cases}
0&\text{if }\mf L^c_t(Z)=0\\
-\infty&\text{if }\mf L^c_t(Z)>0
\end{cases}\]
for any $c\in\partial I$ as well as $\mr e^{-\infty}=0$.
Thus, if we let $\tau_c(Z):=\inf\{t\geq0:Z(t)=c\}$ denote the first hitting time of $c$, then we can interpret
$\mr e^{-\infty\cdot \mf L^c_t(Z)}=\mbf 1_{\{\tau_c(Z)>t\}}$.
In particular, if we remove the term $\xi(L_t(Z))$ from the kernel
\eqref{Equation: Random Semigroup}, then we
recover the classical Feynman-Kac formula
for the semigroup of $H$. See Section
\ref{Section: First Semigroup Technical} for more details.
\end{remark}

\begin{notation}
Given a Kernel $J:I^2\to\mbb R$ (such as ${\hat K}(t)$),
we also use $J$ to denote the integral operator induced by the kernel, that is,
\[Jf(x):=\int_I  J(x,y)f(y)\d y.\]
We say that $J$ is Hilbert-Schmidt if $\|J\|_2<\infty$,
and trace class if $\mr{Tr}[|J|]<\infty$.
\end{notation}

\subsubsection{Main Result}

Our main result is as follows.

\begin{theorem}[Feynman-Kac Formula]
\label{Theorem: Semigroup}
Suppose that Assumptions \ref{Assumption: DB}, \ref{Assumption: PG}, and \ref{Assumption: SN}
hold. Almost surely, $\mr e^{-t\hat H}$ is a Hilbert-Schmidt/trace class integral operator
for every $t>0$. Moreover, for every $t>0$, the following holds with probability one.
\begin{enumerate}
\item $\mr e^{-t\hat H}=\hat K(t)$.
\item $\displaystyle\mr{Tr}\big[\mr e^{-t\hat H}\big]=\int_I\hat K(t;x,x)\d x<\infty.$
\end{enumerate}
\end{theorem}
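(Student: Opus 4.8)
The plan is to establish the Feynman–Kac formula in three stages: first prove that $\hat K(t)$ is a well-defined Hilbert–Schmidt (in fact trace class) operator, then identify it with $\mathrm e^{-t\hat H}$, and finally deduce the trace formula. For the first stage, I would condition on $\Xi$ and estimate $\|\hat K(t)\|_2^2 = \int_{I^2}\hat K(t;x,y)^2\d x\dd y$. By Jensen's inequality applied to the conditional expectation $\mathbf E^{x,y}_t$, the kernel is dominated (after taking the square inside) by the classical heat kernel times $\mathbf E^{x,y}_t[\mathrm e^{-2\langle L_t,V\rangle - 2\xi(L_t) + (\text{boundary terms})}]$; using the Gaussian nature of $\xi(L_t(Z))$ conditional on the path $Z$ — with conditional variance $\|L_t(Z)\|_\gamma^2$ controlled via \eqref{Equation: gamma to Lp Bounds} by a sum of $\|L_t(Z)\|_{q_i}^2$ with $q_i\in[1,2]$ — one integrates out $\Xi$ first to replace $\mathrm e^{-2\xi(L_t)}$ by $\mathrm e^{2\|L_t\|_\gamma^2}$. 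The remaining bound is a deterministic Feynman–Kac estimate with an effective potential $V$ minus a controlled power of the local time; the super-logarithmic growth \eqref{Equation: Assumptions} is exactly what makes $\int_I \Pi_Z(t;x,x)\,\mathbf E^{x}_t[\mathrm e^{-\langle L_t,V\rangle}]\d x$ finite, and one checks that the local-time correction is a lower-order perturbation (local times have Gaussian tails, so $\mathbf E[\mathrm e^{c\|L_t\|_{q}^2}]$ is finite for small times and can be iterated using the semigroup property for large times). This yields $\|\hat K(t)\|_2<\infty$ almost surely, and the trace class statement follows from writing $\hat K(t) = \hat K(t/2)\hat K(t/2)$ via the Chapman–Kolmogorov/concatenation identity for the Feynman–Kac functional.

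For the second stage — the identification $\mathrm e^{-t\hat H} = \hat K(t)$ — I would proceed by a regularization argument. Approximate $\Xi$ by smooth processes $\Xi_\varepsilon$ (mollification), for which $\xi_\varepsilon = \Xi_\varepsilon'$ is a genuine bounded continuous potential; then both the classical Feynman–Kac theorem (e.g.\ \cite[Theorem A.2.7]{Simon}) and standard form-perturbation theory apply, giving $\mathrm e^{-t\hat H_\varepsilon} = \hat K_\varepsilon(t)$ where $\hat H_\varepsilon = H + \xi_\varepsilon$ and $\hat K_\varepsilon$ is the classical kernel. One then passes to the limit $\varepsilon\to 0$ on both sides. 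On the operator side, $\hat{\mc E}_\varepsilon \to \hat{\mc E}$ in the sense of convergence of forms (using the form bound of Proposition \ref{Proposition: Form Bound} and continuity of $\xi$ on $\mr H^1_V$), which gives strong resolvent convergence $\hat H_\varepsilon \to \hat H$ and hence $\mathrm e^{-t\hat H_\varepsilon}\to\mathrm e^{-t\hat H}$ strongly. On the kernel side, one uses that $\xi_\varepsilon(L_t(Z)) \to \xi(L_t(Z))$ (by continuity of the pathwise stochastic integral of Karandikar \cite{Karandikar} under uniform approximation of the integrator, since $a\mapsto L^a_t(Z)\in\mr{PC}_c$) together with uniform integrability coming from the Gaussian-tail bounds of the first stage, to get $\hat K_\varepsilon(t;x,y)\to\hat K(t;x,y)$ and then $L^2(I^2)$-convergence of the kernels, hence Hilbert–Schmidt (a fortiori strong) operator convergence. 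Matching the two limits gives (1). Care must be taken with the boundary local-time terms in Cases 2 and 3, which requires tracking the Robin boundary condition through the mollification and the convention of Remark \ref{Remark: Infinity Times Zero} in the Dirichlet case; here one uses that reflected Brownian motion's boundary local time is a continuous additive functional and that $\mathrm e^{\bar\alpha\mf L^c_t}$ is handled by the Feynman–Kac–Itô formalism for Robin conditions as in \cite{BloemendalVirag}.

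For the third stage, the trace formula (2) is essentially automatic once (1) is known: a Hilbert–Schmidt operator that is trace class and given by a continuous kernel (off a diagonal of measure zero, with the diagonal restriction integrable) has trace equal to the integral of the kernel on the diagonal; alternatively, since $\hat K(t) = \hat K(t/2)^*\hat K(t/2)$ is a product of two Hilbert–Schmidt operators, $\mathrm{Tr}[\hat K(t)] = \|\hat K(t/2)\|_2^2 = \int_{I^2}\hat K(t/2;x,y)^2\d x\dd y = \int_I \hat K(t;x,x)\d x$ by Chapman–Kolmogorov, and the first-stage bounds give finiteness. The main obstacle I anticipate is the second stage: controlling the limit $\varepsilon\to0$ uniformly enough to swap limit and expectation in the kernel, since the exponent $-\xi_\varepsilon(L_t(Z))$ is only conditionally Gaussian with a variance that must be bounded uniformly in $\varepsilon$ — this needs \eqref{Equation: gamma to Lp Bounds} to survive mollification (i.e.\ $\|\cdot\|_{\gamma_\varepsilon}\le\|\cdot\|_\gamma$ up to constants), and it needs the Karandikar pathwise integral to be stable under uniform perturbations of $\Xi$, which is the technically delicate point that the detailed proof in Sections \ref{Section: Operator} and \ref{Section: Semigroup} must address.
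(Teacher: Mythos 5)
Your overall strategy matches the paper's: mollify the noise to get $\hat H_\eps$, apply the classical Feynman--Kac theorem to obtain $\hat K_\eps(t)=\mr e^{-t\hat H_\eps}$, pass to the $\eps\to 0$ limit on both sides, and close via $\hat K(t)=\hat K(t/2)^*\hat K(t/2)$ for the trace formula. Your Stage 1 estimates (Jensen, Gaussian MGF replacing $\mr e^{-\xi(L_t)}$ by $\mr e^{\frac12\|L_t\|_\ga^2}$, then self-intersection local time bounds iterated via the semigroup property) and Stage 3 are precisely what the paper does in Lemmas \ref{Lemma: Regular Self-Intersection}--\ref{Lemma: Deterministic Compactness} and Sections \ref{Section: Main Outline Part 4}--\ref{Section: Main Outline Part 5}.

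The genuine gap is in your Stage 2, where you claim $\xi_\eps(L_t(Z))=\langle L_t(Z),\Xi_\eps'\rangle\to\xi(L_t(Z))$ pathwise ``by continuity of the Karandikar integral under uniform approximation of the integrator.'' No such continuity is available: for a fixed realization of $\Xi$, the map $f\mapsto\xi(f)$ defined via \eqref{Equation: Karandikar Definition} is built from stopping times $\tau^{(n)}_k$ that depend discontinuously on $f$, and nothing in Karandikar's construction guarantees that $\xi(f_m)\to\xi(f)$ when $f_m\to f$ uniformly. The paper circumvents this entirely: via the coupling $\langle f,\Xi_\eps'\rangle=\xi(f*\rho_\eps)$ in \eqref{Equation: Crucial Coupling}, it computes $\mbf E\big[\|\hat K_\eps(t)-\hat K(t)\|_2^2\big]$ directly by \emph{first} integrating out $\Xi$ with the Gaussian MGF (turning the difference into exponentials of $\ga$-norms of $L_t(Z)*\rho_\eps$ vs.\ $L_t(Z)$) and \emph{then} proving the resulting purely deterministic expression tends to zero via $\|f*\rho_\eps-f\|_{q}\to 0$ and \eqref{Equation: gamma to Lp Bounds}. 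This reduces everything to the $L^2(\Om)$-isometry of the stochastic integral, which is robust, and a.s.\ subsequential convergence of the kernels then comes for free; no pathwise stability of the Karandikar map is needed. You should also note that identifying the spectral data of $\hat K(t)$ with $(\mr e^{-t\la_k(\hat H)},\psi_k(\hat H))$ requires the eigenvalue and eigenfunction convergence of Proposition \ref{Proposition: Operator Convergence} (not merely strong semigroup convergence, which does not by itself control individual eigenvectors), and that obtaining the statement simultaneously for all $t>0$ requires the monotonicity argument of Section \ref{Section: Main Outline Part 5}, which your proposal omits.
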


\begin{remark}
\label{Remark: Measurability of K}
We point to Section \ref{Section: Stochastic Integral} and Appendix \ref{Appendix: Measurable}
for a justification of the well-posedness of the conditional
expectation in \eqref{Equation: Random Semigroup}
and that the kernel $\hat K(t)$
is Borel measurable,
thus making quantities such as
\[\int_I \hat K(t;x,y)f(y)~\dd y,
\quad
\int_{I^2}\hat K(t;x,y)^2~\dd x\dd y,
\quad\text{and}\quad
\int_I\hat K(t;x,x)~\dd x\]
(where $f\in L^2$)
well defined.
\end{remark}

\begin{remark}
The closest analogs of Theorem \ref{Theorem: Semigroup} in the
literature are \cite[Proposition 1.8 (a)]{GaudreauLamarreShkolnikov}
and \cite[Corollary 2.2]{GorinShkolnikov}, which concern Case 2
in the special case where $V(x)=x$ and $\Xi$ is a Brownian motion.
All other cases are new.
\end{remark}

\begin{remark}
Though this direction is not explored in this paper,
we expect that one could prove (in similar fashion to,
e.g., \cite[Theorem 4.12]{HuEtAl}) that the kernels
$\hat K(t;x,y)$ admit continuous modifications in $t$, $x$, and $y$.
\end{remark}

\subsection{Optimality and Examples}
\label{Section: Optimality}

We finish Section \ref{Section: Main Results} by discussing the optimality of
the growth condition \eqref{Equation: Assumptions}
in our results and by providing examples of covariance
functions/distributions $\ga$ that satisfy Assumption
\ref{Assumption: SN}.

\subsubsection{Optimality of Potential Growth}


On the one hand,
one of the key aspects of our proof of Proposition \ref{Proposition: Operator}
for unbounded domains $I$
is to show that the growth rate of the squared increment process
\[x\mapsto\big(\Xi(x+1)-\Xi(x)\big)^2\] is dominated by $V$ as $|x|\to\infty$
(see \eqref{Equation: Integrated Noise Bounds}, \eqref{Equation: Integrated Noise Bounds 2}, and the passage that follows).
Given that the growth rate of stationary Gaussian processes (such as $\Xi(x+1)-\Xi(x)$)
is at most of order $\sqrt{\log|x|}$ (e.g., Corollary \ref{Corollary: Stationary Noise Suprema}),
and that in many cases there is also a matching lower bound
(e.g., Remark \ref{Remark: Optimality of Stationary Noise Suprema}),
the growth condition \eqref{Equation: Assumptions}
appears to be the best one can hope for with the method we use to prove Proposition
\ref{Proposition: Operator}.
It would be interesting to see if this condition is necessary for $\hat H$ to have
compact resolvent (perhaps by using the Sturm-Liouville interpretation \eqref{Equation: Bloemendal}).
That being said, for the deterministic operator $H=-\tfrac12\De+V$ on $I=(0,\infty)$,
it is well known that having a spectrum of discrete eigenvalues that are bounded below is equivalent to
$\int_{x}^{x+\de}V(y)\d y\to\infty$ as $x\to\infty$ for all $\de>0$; hence it is natural to expect that $V$
must have some kind logarithmic growth to balance the Gaussian potential.

On the other hand,
condition \eqref{Equation: Assumptions} is necessary to have that
that $\mbf E\big[\|\hat K(t)\|_2^2\big]<\infty$
for $t>0$ close to zero, which is crucial in our proof
of Theorem \ref{Theorem: Semigroup}.
Given that the deterministic semigroup $\mr e^{-t H}$
is not trace class for small $t>0$ when \eqref{Equation: Assumptions}
does not hold, we do not expect it is possible to improve Theorem
\ref{Theorem: Semigroup} in that regard.
We refer to
Remark \ref{Remark: Optimality of Log Follow-Up}
for more details.

\subsubsection{Examples}

Given the simplicity of  Assumption \ref{Assumption: FN},
it is straightforward to come up with examples of Gaussian noises to which
Proposition \ref{Proposition: Operator} can be applied. In contrast,
Assumption \ref{Assumption: SN} is a bit more involved.
In what follows, we provide examples of covariance functions/distributions $\ga$
that satisfy Assumption \ref{Assumption: SN}.

\begin{example}\label{Example: Four Noises}
Let $\ga:\mr{PC}_c\to\mbb R$ be an even almost-everywhere-defined
function or Schwartz distribution.
\begin{enumerate}
\item ({\bf Bounded}) If $\ga\in L^\infty(\mbb R)$,
then we call $\xi$ a bounded noise.
Depending on the regularity of $\ga$,
in many such cases $\xi$ can actually be realized as a continuous
Gaussian process on $\mbb R$ with covariance $\mbf E[\xi(x)\xi(y)]=\ga(x-y)$.

\item ({\bf White}) If $\ga=\si^2\de_0$ for some $\si>0$, where $\de_0$
denotes the delta Dirac distribution, then $\xi$ is a
Gaussian white noise with variance $\si^2$.
This corresponds to stochastic integration with respect
to a two-sided Brownian motion $W$ with variance $\si^2$:
\[\xi(f)=\int_{\mbb R}f(x)\d W(x).\]

\item ({\bf Fractional}) If $\ga(x):=\si^2\mf H(2\mf H-1)|x|^{2\mf H-2}$ for $\si>0$ and $\mf H\in(1/2,1)$,
then $\xi$ is a fractional noise with variance $\si^2$ and Hurst parameter $\mf H$.
This noise corresponds to stochastic integration with respect
to a two-sided fractional Brownian motion $W^\mf H$ with variance $\si^2$
and Hurst parameter $\mf H$:
\[\xi(f)=\int_{\mbb R}f(x)\d W^\mf H(x).\]

\item ({\bf $\bs{L^p}$-Singular}) Let $\ell\in\mbb N$ and $1\leq p_1,\ldots,p_\ell<\infty$.
As a generalization of bounded and fractional noise,
we say that $\xi$ is an $L^p$-singular noise if
\[\ga=\ga_1+\cdots+\ga_\ell+\ga_\infty,\]
where $\ga_i\in L^{p_i}(\mbb R)$ for $1\leq i\leq \ell$
and $\ga_\infty\in L^\infty(\mbb R)$. Indeed, the $\ga_i$
may have one or several $p_i$-integrable point singularities,
such as $\ga_i(x)\sim |x|^{-\mf e}$ as $x\to0$ for some $\mf e\in(0,1/p_i)$,
or $\ga_i(x)\sim (-\log |x|)^{\mf e}$ as $x\to0$ for $\mf e>0$.
\end{enumerate}
\end{example}

Our last result in this Section is the following.

\begin{proposition}\label{Proposition: Examples}
For every covariance $\ga$ in Example \ref{Example: Four Noises},
there exists a centered Gaussian process $\Xi$ that satisfies Assumption \ref{Assumption: SN}.
\end{proposition}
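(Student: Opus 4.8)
The plan is to verify, for each of the four families of covariances $\ga$ in Example \ref{Example: Four Noises}, the three substantive requirements of Assumption \ref{Assumption: SN}: that $\langle\cdot,\cdot\rangle_\ga$ is a genuine semi-inner-product on $\mr{PC}_c$ (needed even to invoke Definition \ref{Definition: Covariance}), that the $L^q$-domination bound \eqref{Equation: gamma to Lp Bounds} holds for some exponents $1\le q_1,\dots,q_\ell\le 2$, and that a centered Gaussian process $\Xi$ with the prescribed covariance \eqref{Equation: Xi Covariance} exists and has a continuous version vanishing at the origin. By Remark \ref{Remark: Redundancy}, once $\langle\cdot,\cdot\rangle_\ga$ is a semi-inner-product and \eqref{Equation: gamma to Lp Bounds} holds, the existence and path-continuity of $\Xi$ — together with stationary increments — follow automatically from abstract Gaussian process theory plus Kolmogorov's continuity criterion, so the real work is the first two points.

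First I would establish positive semidefiniteness of $\langle\cdot,\cdot\rangle_\ga$ uniformly. For $\ga=\si^2\de_0$ one has $\langle f,f\rangle_\ga=\si^2\|f\|_2^2\ge 0$; for $\ga\in L^\infty$ one writes $\langle f,f\rangle_\ga=\langle f*\mf r f,\ga\rangle$ and notes that this is the action of a tempered distribution with nonnegative Fourier transform (Bochner), but cleanest is to exhibit each $\ga$ explicitly as a Fourier transform of a nonnegative measure: $\de_0$ has constant (hence nonnegative) transform, and the Riesz kernel $|x|^{2\mf H-2}$ for $\mf H\in(1/2,1)$ has Fourier transform a positive constant times $|\zeta|^{1-2\mf H}\ge0$. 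For the bounded case I would reduce to the classical fact that a noise with measurable covariance $\ga\in L^\infty$ arises as an honest Gaussian field, whose smeared version has covariance $\langle f,g\rangle_\ga$ — manifestly a semi-inner-product. For the $L^p$-singular case, positivity is not automatic for an arbitrary decomposition, so I would impose (as the statement implicitly intends) that $\ga$ itself be a valid covariance — i.e. has nonnegative Fourier transform — and then the semi-inner-product property is inherited; bounded and fractional are the special cases $\ell=0$ and $\ell=1,p_1<1/(2-2\mf H)$ respectively.

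Second, and this is where the main effort lies, I would prove the domination \eqref{Equation: gamma to Lp Bounds}. In the white case $\|f\|_\ga^2=\si^2\|f\|_2^2$, so $q_1=2$, $c_\ga=\si^2$. In the bounded case, $|\langle f,f\rangle_\ga|\le\|\ga\|_\infty\|f*\mf r f\|_1=\|\ga\|_\infty\|f\|_1^2$ by Young's inequality, giving $q_1=1$. The fractional and $L^p$-singular cases are handled together: for $\ga_i\in L^{p_i}(\mbb R)$, $|\langle f,f\rangle_{\ga_i}|=|\langle f*\mf r f,\ga_i\rangle|\le\|\ga_i\|_{p_i}\,\|f*\mf r f\|_{p_i'}$, and by the sharp Young / Hausdorff–Young-type convolution inequality $\|f*\mf r f\|_{p_i'}\le\|f\|_{q_i}^2$ with $2/q_i=1+1/p_i'=2-1/p_i$, i.e. $q_i=2p_i/(2p_i-1)\in(1,2]$; the $L^\infty$-summand contributes the $\|f\|_1^2$ term as before. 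Summing over $i$ produces \eqref{Equation: gamma to Lp Bounds} with the finite list $q_1,\dots,q_\ell$ (plus possibly the exponent $1$), and the constant $c_\ga$ a sum of the $\|\ga_i\|_{p_i}$ and $\|\ga_\infty\|_\infty$. The only delicate bookkeeping is checking that the convolution exponent $q_i$ indeed lands in $[1,2]$, which it does precisely because $p_i\ge1$; for the fractional kernel one must additionally verify $|x|^{2\mf H-2}\in L^{p_1}_{\mathrm{loc}}$ near $0$ and handle its slow decay at infinity, which forces writing $\ga=\ga_1+\ga_\infty$ with $\ga_1$ the truncation near the origin — this splitting is the one genuinely fiddly step. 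Having verified (1) and (2) of Assumption \ref{Assumption: SN} in every case, I would then invoke Remark \ref{Remark: Redundancy} to conclude the existence of the continuous, stationary-increment process $\Xi$ with $\Xi(0)=0$, completing the proof.
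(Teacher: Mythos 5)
Your proof is correct and follows essentially the same route as the paper: both proofs reduce the substantive work to the $L^q$-domination bound \eqref{Equation: gamma to Lp Bounds} (stated in the paper as Proposition \ref{Proposition: Gamma to Lp Bounds}), and both obtain it via Hölder/Young, writing $\|f\|_\ga^2=\langle f*\mf r f,\ga\rangle$ and bounding $\|f*\mf r f\|_{p_i'}\le\|f\|_{q_i}^2$ with $2/q_i=2-1/p_i$, i.e.\ $q_i=2p_i/(2p_i-1)\in(1,2]$. Your treatment of the fractional kernel by splitting it into an $L^1$ singular part near the origin plus an $L^\infty$ tail at infinity is exactly the paper's decomposition into the regions $\{|a-b|<1\}$ and $\{|a-b|\ge1\}$; and your final step — invoking Remark \ref{Remark: Redundancy} to get existence, stationary increments, and Kolmogorov continuity from the $\|\cdot\|_\ga\lesssim\sum\|\cdot\|_{q_i}$ bound — is literally how the paper closes. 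The one place you go beyond the paper is in verifying positive semidefiniteness of $\langle\cdot,\cdot\rangle_\ga$ via Bochner/Fourier positivity; the paper sidesteps this entirely by building the semi-inner-product requirement into Definition \ref{Definition: Covariance}, so the examples are understood to come pre-equipped with that property. Your reading of the $L^p$-singular case — that nonnegativity of the form must be imposed on $\ga$ rather than deduced from an arbitrary decomposition — is the correct and intended one.
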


\section{Proof Outline}\label{Section: Outline}

In this section, we provide an outline of the proofs of our main results.
Most of the more technical results, which we state here as a string of propositions,
are accounted for in Sections \ref{Section: Operator} and \ref{Section: Semigroup}.
Throughout Section \ref{Section: Outline}, we assume that Assumptions
\ref{Assumption: DB} and \ref{Assumption: PG} are met.

\subsection{Outline for Propositions \ref{Proposition: Noise Continuity} and \ref{Proposition: Operator}}\label{Section: Operator Outline}

In this outline, we assume that Assumption \ref{Assumption: FN} holds.
Let $\mr{FC}\subset\mr C^\infty_0$ be the set of real-valued smooth functions $\phi:I\to\mbb R$
such that
\begin{enumerate}
\item $\mr{supp}(\phi)$ is a compact subset of $I$ in Cases 1, 2-D, and 3-D;
\item $\mr{supp}(\phi)$ is a compact subset of $I$'s closure in Cases 2-R and 3-R; and
\item $\mr{supp}(\phi)$ is a compact subset of $[0,b)$ in Case 3-M.
\end{enumerate}
We begin with two classical results in the theory of Schr\"odinger operators.
(For definitions of the functional analysis terminology used in this section, we refer to
\cite[Section VIII.6]{ReedSimonI}, \cite[Section 7.5]{SimonBook}, or \cite[Section 2.3]{Teschl09}.)

\begin{lemma}\label{Lemma: Pointwise Form Bound}
For every $\ka>0$, there exists $c=c(\ka)>0$
such that for every $f\in\mr{AC}\cap L^2$,
one has $f(x)^2\leq\ka\|f'\|_2^2+c\|f\|_2^2$ for all $x\in I$.
\end{lemma}

\begin{proposition}\label{Proposition: Classical Schrodinger Operator}
$\mc E$ is closed and semibounded on $D(\mc E)$, and
$\mr{FC}$ is a form core for $\mc E$. $H$ is the unique self-adjoint
operator on $L^2$ whose sesquilinear form is $\mc E$, and $H$ has compact resolvent.
Lastly, $\|\cdot\|_*$ is equivalent to the $``$+1 norm$"$ induced by the form $\mc E$,
where we recall that the latter is defined as
\[\|f\|_{+1}^2:=\mc E(f,f)+(c+1)\|f\|_2^2,\qquad f\in D(\mc E),\]
with $c>0$ being a constant large enough so that $\mc E(f,f)+c\|f\|^2\geq0$
for every $f\in D(\mc E)$.
\end{proposition}

Although Lemma \ref{Lemma: Pointwise Form Bound} and Proposition \ref{Proposition: Classical Schrodinger Operator}
can be proved using standard functional-analytic arguments,
we were not able to locate an exact statement in the literature that covers every
case considered in this paper. For the sake of completeness, we provide a proof and references
in Appendix \ref{Appendix: Operator}.

\begin{remark}
\label{Remark: Equivalence of Norms Density}
Since $\|\cdot\|_*$ and $\|\cdot\|_{+1}$ are equivalent, the claim that $\mc E$
is closed on $D(\mc E)$ and that $\mr{FC}$ is a form core is equivalent to the
claim that $\big(D(\mc E),\langle\cdot,\cdot\rangle_*\big)$ is a Hilbert space in
which $\mr{FC}$ is dense.
\end{remark}

The following proposition, which we prove in Section \ref{Section: Operator},
is a generalization of a result that first appeared in \cite{RamirezRiderVirag},
and also uses Lemma \ref{Lemma: Pointwise Form Bound} as a crucial input:

\begin{proposition}\label{Proposition: Form Bound}
The inequality \eqref{Equation: Noise Continuity} holds almost surely, and thus $f\mapsto\xi(f^2)$ extends uniquely to a continuous
quadratic form on $\mr H^1_V$ that satisfies \eqref{Equation: Noise Continuity} for all $f\in\mr H^1_V$.
Moreover, almost surely,
for every $\theta>0$, there exists $c=c(\theta)>0$ such that
\begin{align}
\label{Equation: Infinitesimal Form Bound}
|\xi(f^2)|\leq\theta\mc E(f,f)+c\|f\|_2^2,\qquad f\in D(\mc E).
\end{align}
\end{proposition}

Thanks to \eqref{Equation: Infinitesimal Form Bound}, almost surely, $\xi$
is an infinitesimally form-bounded perturbation of $\mc E$. Therefore, according to the KLMN
theorem (e.g., \cite[Theorem X.17]{ReedSimonII} or \cite[Theorem 7.5.7]{SimonBook}),
$\hat{\mc E}=\mc E+\xi$ is closed and semibounded on $D(\mc E)$, and $\mr{FC}$ is a form
core for $\hat{\mc E}$. Thus, by \cite[Theorem VIII.15]{ReedSimonI}, there exists a unique
self-adjoint operator ${\hat H}$ satisfying conditions (1) and (2) in the statement of
Proposition \ref{Proposition: Operator}. Since $ H$ has compact resolvent and ${\hat H}$
is infinitesimally form-bounded by $ H$, the fact that ${\hat H}$ has compact resolvent
follows from standard variational estimates (e.g., \cite[Theorem XIII.68]{ReedSimonIV}).

\subsection{Outline for Theorem \ref{Theorem: Semigroup}}\label{Section: Semigroup Outline}

We now go over the outline of the proof of our main result.
Throughout, we assume that Assumption \ref{Assumption: SN} holds.
The outline presented here is separated into five steps.
In the first step we provide details on the construction of the pathwise stochastic
integral \eqref{Equation: Stochastic Integral}. In the second step, we introduce
smooth-noise approximations of $\hat H$ and $\hat K(t)$ that serve as the basis
of our proof of Theorem \ref{Theorem: Semigroup}. Then, in the last three steps
we prove Theorem \ref{Theorem: Semigroup} using these smooth approximations.

\subsubsection{Step 1. Stochastic Integral}
\label{Section: Stochastic Integral}

If $f\in\mr{PC}_c$ is a step function of the form \eqref{Equation: Step Function},
then we can define a pathwise stochastic integral in the usual way:
\[\xi(f)=\int_\mbb R f(x)\d\Xi(x):=\sum_{i=1}^kc_i\big(\Xi(x_{i+1})-\Xi(x_i)\big).\]
Thanks to \eqref{Equation: Xi Covariance}, straightforward computations
reveal that for such $f$ we have the isometry $\mbf E\big[\xi(f)^2\big]=\|f\|_\ga^2$.
According to \eqref{Equation: gamma to Lp Bounds},
step functions are dense in $\mr{PC}_c$ with respect to $\|f\|_\ga^2$,
and thus we may then uniquely define a stochastic integral $\xi^*(f)$ for arbitrary $f\in\mr{PC}_c$
as the $L^2(\Omega)$ limit of $\xi(f_n)$, where $f_n$ is a sequence of step functions
that converges to $f$ in $\|\cdot\|_\ga$ and $L^2(\Om)$ denotes the space of square-integrable
random variables on the same probability space
on which $\Xi$ is defined.

We now discuss how $\xi(f)$ for general $f\in\mr{PC}_c$ can be defined in a pathwise sense
as per Karandikar \cite{Karandikar}. Given $f\in\mr{PC}_c$, for every $n\in\mbb N$,
define $k(n)$ and $-\infty<\tau^{(n)}_1\leq\tau^{(n)}_2\leq\cdots\leq\tau^{(n)}_{k(n)+1}<\infty$
as the quantities
\[\tau^{(n)}_1:=\inf\big\{x\in\mbb R:f(x)\neq0\big\},
\qquad
\tau^{(n)}_{k(n)+1}:=\sup\big\{x\in\mbb R:f(x)\neq0\big\}\]
and
\[\tau^{(n)}_k:=\inf\big\{x\geq\tau^{(n)}_{k-1}:\big|f\big(x\big)-f\big(\tau^{(n)}_{k-1}\big)\big|\geq2^{-n}\big\},
\qquad 1<k\leq k(n).\]
Then, we define the approximate step function
\[f^{(n)}:=\sum_{k=1}^{k(n)}f\big(\tau^{(n)}_k\big)\mbf 1_{[\tau^{(n)}_k,\tau^{(n)}_{k+1})}\]
as well as the pathwise stochastic integral
\begin{align}
\label{Equation: Karandikar Definition}
\xi(f)=\int_\mbb Rf(x)\d\Xi(x):=\begin{cases}
\displaystyle\lim_{n\to\infty}\xi(f^{(n)})&\text{if the limit exists}\\
0&\text{otherwise}.
\end{cases}
\end{align}

On the one hand, as argued in Appendix \ref{Appendix: Measurable}
(see also \cite[Section 1]{Karandikar}),
the pathwise definition of $f\mapsto\xi(f)$ in \eqref{Equation: Karandikar Definition}
enables $\hat K(t)$'s definition as a conditional expectation of $\xi\big(L_t(Z)\big)$ given $\Xi$.
On the other hand, $\xi(f)$ retains its meaning as a stochastic integral,
since for every $f\in\mr{PC}_c$, it holds that $\xi(f)=\xi^*(f)$ almost surely. Indeed, by combining
the $L^2(\Omega)$-$\|\cdot\|_\ga$ isometry of $\xi^*$, the definition of $\tau^{(n)}_k$,
and \eqref{Equation: gamma to Lp Bounds},
we get that
\begin{multline*}
\mbf E\big[\big(\xi(f^{(n)})-\xi^*(f)\big)^2\big]=\|f^{(n)}-f\|_\ga^2\\
\leq c_\ga\left(\sum_{i=1}^\ell \|f^{(n)}-f\|^2_{q_i}\right)
\leq c_\ga2^{-2n}\left(\sum_{i=1}^\ell|\mr{supp}(f)|^{2/q_i}\right);
\end{multline*}
since this is summable in $n$ we conclude that $\xi(f^{(n)})\to\xi^*(f)$ almost surely,
as desired.

\begin{remark}
\label{Remark: By Parts}
Let $f\in\mr C^\infty_0(I)$,
and suppose that we restrict our attention to the almost-sure event on which $\Xi$ is continuous.
By a summation by parts, we note that
\begin{multline*}
\xi(f^{(n)})
=\sum_{k=1}^{k(n)}f\big(\tau^{(n)}_k\big)\Big(\Xi\big(\tau^{(n)}_{k+1}\big)-\Xi\big(\tau^{(n)}_k\big)\Big)\\
=f\big(\tau^{(n)}_{k(n)}\big)\Xi\big(\tau^{(n)}_{k(n)+1}\big)-f\big(\tau^{(n)}_{1}\big)\Xi\big(\tau^{(n)}_{1}\big)-\sum_{k=2}^{k(n)}\Xi\big(\tau^{(n)}_k\big)\Big(f\big(\tau^{(n)}_k\big)-f\big(\tau^{(n)}_{k-1}\big)\Big)
\end{multline*}
for all $n\in\mbb N$.
On the one hand,
in Cases 1 and 2, we invariably have that
\[f\big(\tau^{(n)}_{k(n)}\big)\Xi\big(\tau^{(n)}_{k(n)+1}\big)-f\big(\tau^{(n)}_{1}\big)\Xi\big(\tau^{(n)}_{1}\big)=0\]
since $\Xi(0)=0$ and $f$ is compactly supported on $I$'s closure; similarly, in Case 3,
\[f\big(\tau^{(n)}_{k(n)}\big)\Xi\big(\tau^{(n)}_{k(n)+1}\big)-f\big(\tau^{(n)}_{1}\big)\Xi\big(\tau^{(n)}_{1}\big)=f(b)\Xi(b).\]
On the other hand, since $f$ is of bounded variation, we have convergence to the usual Riemann-Stieltjes integral:
\[\lim_{n\to\infty}\sum_{k=2}^{k(n)}\Xi\big(\tau^{(n)}_k\big)\Big(f\big(\tau^{(n)}_k\big)-f\big(\tau^{(n)}_{k-1}\big)\Big)=-\int_I\Xi(x)\d f(x)=-\langle f',\Xi\rangle.\]
In particular, the pathwise stochastic integral defined in \eqref{Equation: Karandikar Definition}
can be seen as an extension of the
Schwartz distribution $\Xi'$ as defined in Definition \ref{Definition: Operator Noise} to all of $\mr{PC}_c$.
However, as noted in an earlier remark,
$\xi$ need not preserve its linearity on all of $\mr{PC}_c$.
\end{remark}

\subsubsection{Step 2. Smooth Approximations}
\label{Section: Smooth Approximations}

A key ingredient in the proof of Theorem \ref{Theorem: Semigroup}
consists of using smooth approximations of $\Xi'$ for which the 
classical Feynman-Kac formula can be applied, thus creating a connection
between $\hat H$ as defined via a quadratic form and the kernels $\hat K(t)$.

\begin{definition}
Let $\rho:\mbb R\to\mbb R$ be a mollifier, that is,
\begin{enumerate}
\item $\rho$ is smooth, compactly supported, nonnegative, even (i.e., $\rho(x)=\rho(-x)$),
and such that $\int\rho(x)\d x=1$; and
\item if we define $\rho_\eps(x):=\eps^{-1}\rho(x/\eps)$ for every $\eps>0$,
then $\rho_\eps\to\de_0$ as $\eps\to0$ in the space of Schwartz distributions,
where $\de_0$ denotes the delta Dirac distribution.
\end{enumerate}
For every $\eps>0$, we define
the stochastic process $\Xi_\eps:=\Xi*\rho_\eps(x)$,
where $*$ denotes the convolution.
\end{definition}

\begin{remark}
\label{Remark: Covariance of Xi Epsilon}
Since $\rho_\eps$ is smooth, the process
$\Xi_\eps'=(\Xi*\rho_\eps)'=\Xi*\rho_\eps'$ has continuous sample paths.
Thanks to \eqref{Equation: Xi Covariance},
straightforward computations reveal that
$\Xi_\eps'$ is a stationary Gaussian process with mean zero and covariance
\begin{multline}\label{Equation: Eps Covariance Function}
\mbf E[\Xi_\eps'(x)\Xi_\eps'(y)]
=\mbf E[(\Xi*\rho_\eps')(x)(\Xi*\rho_\eps')(y)]\\
=\int_{\mbb R^2}\mbf E\big[\Xi(a)\Xi(b)\big]\rho_\eps'(a-x)\rho_\eps'(b-y)\d a\dd b
=\big(\ga*\rho^{*2}_\eps\big)(x-y)
\end{multline}
for every $x,y\in\mbb R$,
where the last equality follows from integration by parts.

Moreover, following-up on Remark
\ref{Remark: By Parts}, we note that
the pathwise stochastic integral $\xi$
is coupled to the random Schwartz distribution
\[f\mapsto\int_\mbb Rf(x)\Xi_\eps'(x)\d x,\qquad f\in \mr{PC}_c\]
in the following way: For every $f\in\mr{PC}_c$,
the function $f*\rho_\eps$ is smooth and compactly
supported on $I+\mr{supp}(\rho_\eps)\subset\mbb R$, and thus by Remark \ref{Remark: By Parts}
we have that
\begin{multline}
\label{Equation: Crucial Coupling}
\int_{I} f(x)\Xi_\eps'(x)\d x
=\int_{I} f(x)(\Xi*\rho_\eps)'(x)\d x
=\int_{I} f(x)(\Xi*\rho_\eps')(x)\d x\\
=-\int_{\mbb R} (f*\rho_\eps')(x)\Xi(x)\d x
=-\int_{\mbb R} (f*\rho_\eps)'(x)\Xi(x)\d x
=\xi(f*\rho_\eps).
\end{multline}
\end{remark}

\begin{definition}\label{Definition: Approximate}
For every $\eps>0$, let us define the sesquilinear form
\[\hat{\mc E}_\eps(f,g):=\mc E(f,g)+\langle fg,\Xi_\eps'\rangle\]
on the form domain $D(\mc E)$,
and the random kernel
\[\hat K_\eps(t;x,y):=\begin{cases}
\Pi_B(t;x,y)\,\mbf E^{x,y}_t\Big[\mr e^{-\langle L_t(B),V+\Xi_\eps'\rangle}\Big]&\text{(Case 1)}
\vspace{5pt}
\\
\Pi_X(t;x,y)\,\mbf E^{x,y}_t\Big[\mr e^{-\langle L_t(X),V+\Xi_\eps'\rangle+\bar\al \mf L^0_t(X)}\Big]&\text{(Case 2)}
\vspace{5pt}
\\
\Pi_Y(t;x,y)\,\mbf E^{x,y}_t\Big[\mr e^{-\langle L_t(Y),V+\Xi_\eps'\rangle+\bar\al \mf L^0_t(Y)+\bar\be \mf L^b_t(Y)}\Big]&\text{(Case 3)}
\end{cases}\]
\end{definition}

Since $\Xi_\eps'$ has regular sample paths, applying classical
operator theory to $\hat H_\eps$ yields the following result:

\begin{proposition}\label{Proposition: Approximate Operator Properties}
For every $\eps>0$, the following holds almost surely:
There exists a unique self-adjoint
operator ${\hat H}_\eps$ with dense domain $D({\hat H}_\eps)\subset L^2$ such that
\begin{enumerate}
\item $D({\hat H}_\eps)\subset D(\mc E)$;
\item For every $f,g\in D({\hat H}_\eps)$, one has $\langle f,{\hat H}_\eps g\rangle=\hat{\mc E}_\eps(f,g)$; and
\item ${\hat H}_\eps$ has compact resolvent.
\end{enumerate}
For every $t>0$, $\mr e^{-t\hat H_\eps}$
is a self-adjoint Hilbert-Schmidt/trace class operator,
and we have the Feynman-Kac formula $\mr e^{-t\hat H_\eps}=\hat K_\eps(t)$.
In particular,
\begin{align}
\label{Equation: Approximate Symmetry Property}
&\hat K_\eps(t;x,y)=\hat K_\eps(t;y,x),&& t>0,~x,y\in I;\\
\label{Equation: Approximate Semigroup Property}
&\int_I\hat K_\eps(t;x,z)\hat K_{\eps}(\bar t;z,y)\d z=\hat K_{\eps}(t+\bar t;x,y),&& t,\bar t>0,~x,y\in I;\\
\label{Equation: Approximate Spectral Expansion}
&\hat K_\eps(t)f=\sum_{i=1}^k\mr e^{-t\la_k(\hat H_\eps)}\langle\psi_k(\hat H_\eps),f\rangle\psi_k(\hat H_\eps),&& f\in L^2.
\end{align}
\end{proposition}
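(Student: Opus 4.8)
The plan is to observe that, for fixed $\eps>0$, the process $\Xi_\eps'=\Xi*(\rho_\eps)'$ has almost surely continuous (indeed smooth) sample paths, so that on the event of full probability where this holds, the operator $\hat H_\eps = H + \Xi_\eps'$ is a genuine Schr\"odinger operator with a locally integrable, bounded-below potential $V+\Xi_\eps'$, and one can simply invoke the classical theory. First I would fix such a realization of $\Xi$ and set $V_\eps := V + \Xi_\eps'$. Since $\Xi_\eps'$ is continuous, it is locally bounded, hence locally integrable; combined with Assumption \ref{Assumption: PG} this makes $V_\eps$ locally integrable and bounded below on $I$'s closure. Moreover, since $|\Xi_\eps'(x)|$ grows at most like $\sqrt{\log|x|}$ as $|x|\to\infty$ (as $\Xi_\eps'$ is a stationary Gaussian process, cf. Corollary \ref{Corollary: Stationary Noise Suprema}), the growth condition \eqref{Equation: Assumptions} is inherited by $V_\eps$: for unbounded $I$ we still have $\liminf_{|x|\to\infty} V_\eps(x)/\log|x| = \infty$. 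Thus $V_\eps$ satisfies a shifted version of Assumption \ref{Assumption: PG}, and Proposition \ref{Proposition: Classical Schrodinger Operator} applies verbatim with $V$ replaced by $V_\eps$: the form $\hat{\mc E}_\eps$ is closed and semibounded, $\mr C^\infty_0$ is a form core, $\hat H_\eps$ is the unique self-adjoint operator attached to it, and it has compact resolvent. This gives part (1).

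For part (2), the compact resolvent from part (1) already implies that $\hat H_\eps$ has purely discrete spectrum $\la_1(\hat H_\eps)\le\la_2(\hat H_\eps)\le\cdots\nearrow\infty$ with an orthonormal eigenbasis, so the spectral expansion \eqref{Equation: Approximate Spectral Expansion} and the self-adjointness and boundedness of $\mr e^{-t\hat H_\eps}$ are immediate; the symmetry \eqref{Equation: Approximate Symmetry Property} and semigroup property \eqref{Equation: Approximate Semigroup Property} of the kernel will follow once we know $\mr e^{-t\hat H_\eps} = \hat K_\eps(t)$, since they are just the operator identities $\mr e^{-t\hat H_\eps}$ self-adjoint and $\mr e^{-t\hat H_\eps}\mr e^{-\bar t\hat H_\eps} = \mr e^{-(t+\bar t)\hat H_\eps}$ read off at the level of kernels. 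So the crux is the Feynman-Kac identity $\mr e^{-t\hat H_\eps}=\hat K_\eps(t)$. For this I would appeal to the classical Feynman-Kac formula for Schr\"odinger semigroups with a locally integrable potential bounded below (e.g. \cite[Theorem A.2.7]{Simon} in Case 1, with the analogous statements on the half-line and bounded interval incorporating the boundary local time terms $\bar\al\mf L^0_t$, $\bar\be\mf L^b_t$ via the Robin/Dirichlet data exactly as in \cite{BloemendalVirag}), applied with potential $V_\eps$. The point is that for a continuous (or merely locally integrable) function $\Xi_\eps'$, the additive functional $\int_0^t \Xi_\eps'(Z(s))\,\dd s = \langle L_t(Z), \Xi_\eps'\rangle$ is the ordinary one appearing in the classical formula, so the classical theorem gives exactly $\mr e^{-t\hat H_\eps}f(x) = \int_I \hat K_\eps(t;x,y) f(y)\,\dd y$. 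The trace class / Hilbert-Schmidt property for each $t>0$ then follows from the growth condition on $V_\eps$ in the standard way (the deterministic semigroup $\mr e^{-tH_\eps}$ with $H_\eps = -\tfrac12\De + V_\eps$ is trace class under \eqref{Equation: Assumptions}, cf. \cite[Theorems XIII.67 and XIII.16]{ReedSimonIV} or the discussion in Section \ref{Section: Optimality}), and the kernel $\hat K_\eps(t;x,y)$ is dominated by $\Pi_Z(t;x,y)\,\mbf E^{x,y}_t[\mr e^{-\langle L_t(Z), V_\eps^-\rangle}]$ type bounds.

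The main obstacle here is really bookkeeping rather than conceptual: one must check that the boundary conditions in Assumption \ref{Assumption: DB} are correctly matched to the local-time terms in the definition of $\hat K_\eps(t)$ across all of Cases 1, 2-D, 2-R, 3-D, 3-R, 3-M, and that the convention $-\infty\cdot\mf L^c_t = $ (indicator of not hitting $c$) from Remark \ref{Remark: Infinity Times Zero} correctly encodes Dirichlet conditions as killing at the boundary while Robin conditions appear as an exponential weight. The cleanest route is to note that $\hat K_\eps(t)$ differs from the classical deterministic kernel for $H_\eps = -\tfrac12\De + V_\eps$ only in that the potential $V$ is replaced by $V+\Xi_\eps'$ inside the exponential, so the boundary-condition verification reduces entirely to the $\Xi_\eps' = 0$ case, which is exactly the classical Feynman-Kac formula for $H$ on $I$ with the given boundary conditions as recorded in Remark \ref{Remark: Infinity Times Zero}; see also Section \ref{Section: First Semigroup Technical}. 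One should also remark that the exceptional null set depends on $\eps$, which is harmless since the proposition is stated for each fixed $\eps>0$; the passage to a single null set good for all $\eps$ simultaneously is deferred to the limiting argument in the later steps of the outline.
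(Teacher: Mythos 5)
Your proposal is correct and follows essentially the same route as the paper: reduce to the deterministic case by showing that $V+\Xi_\eps'$ satisfies Assumption~\ref{Assumption: PG} (up to a random additive constant, the key point being the $\sqrt{\log|x|}$ growth bound on the stationary process $\Xi_\eps'$ from Corollary~\ref{Corollary: Stationary Noise Suprema}), then invoke Proposition~\ref{Proposition: Classical Schrodinger Operator} for part (1) and the deterministic Feynman-Kac result (Corollary~\ref{Corollary: Feynman-Kac with V}) for part (2). The paper attributes the trace-class property specifically to Lemma~\ref{Lemma: Deterministic Compactness} with $p=1$ rather than to a general Reed-Simon citation, but this is a matter of bookkeeping, not substance.
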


Moreover, as a direct consequence of the coupling \eqref{Equation: Crucial Coupling}
and the fact that $\xi$ is a Gaussian process with covariance $\langle\cdot,\cdot\rangle_\ga$,
we can show that the objects introduced in Definition
\ref{Definition: Approximate} serve as good approximations of ${\hat H}$ and ${\hat K}(t)$ in the following sense:

\begin{proposition}\label{Proposition: Operator Convergence}
Almost surely,
every vanishing sequence in $(0,1]$ has
a further subsequence $(\eps_n)_{n\in\mbb N}$ along which
\begin{align}\label{Equation: Operator Convergence}
\lim_{n\to\infty}\la_k({\hat H}_{\eps_n})=\la_k({\hat H})
\qquad\text{and}\qquad
\lim_{n\to\infty}\|\psi_k({\hat H}_{\eps_n})-\psi_k({\hat H})\|_2=0
\end{align}
for all $k\in\mbb N$, up to possibly relabeling the eigenfunctions of
${\hat H}$ if it has repeated eigenvalues.
\end{proposition}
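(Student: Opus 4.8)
The plan is to deduce the convergence of eigenvalues and eigenfunctions from convergence of the associated quadratic forms, exploiting the infinitesimal form-boundedness established in Proposition \ref{Proposition: Form Bound} together with the coupling \eqref{Equation: Crucial Coupling} between the pathwise stochastic integral $\xi$ and the smoothed noise $\Xi_\eps'$. The natural tool is a form-version of norm-resolvent convergence: if $\hat{\mc E}_{\eps_n}\to\hat{\mc E}$ in a strong enough sense on the common form core $\mr C^\infty_0$, while all the forms share a \emph{uniform} semiboundedness constant and a uniform infinitesimal form bound relative to $\mc E$ (equivalently relative to $H$, which has compact resolvent), then the resolvents $(\hat H_{\eps_n}+\la)^{-1}$ converge in operator norm to $(\hat H+\la)^{-1}$ for $\la$ large, and norm-resolvent convergence of operators with compact resolvent yields exactly \eqref{Equation: Operator Convergence} (with the usual caveat about relabeling in the presence of multiplicities); see, e.g., \cite[Theorem VIII.23--VIII.25]{ReedSimonI} and \cite[Theorem XIII.68]{ReedSimonIV}.

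Concretely, I would proceed in the following steps. First, extract a \textbf{uniform form bound}: applying Proposition \ref{Proposition: Form Bound} to the process $\Xi_\eps$ in place of $\Xi$, one needs the constant $c(\theta)$ to be controllable uniformly in $\eps$ along a suitable sequence. Since the key quantity controlling $c(\theta)$ is the growth rate of the squared increment process $x\mapsto(\Xi_\eps(x+1)-\Xi_\eps(x))^2$, and $\Xi_\eps=\Xi*\rho_\eps$ converges to $\Xi$ locally uniformly as $\eps\to0$ (with increments dominated, after mollification, by a shifted supremum of the increments of $\Xi$ itself), one obtains that for each fixed $\theta>0$ there is a finite random $c^*(\theta)$ with $|\langle f^2,\Xi_\eps'\rangle|\le\theta\,\mc E(f,f)+c^*(\theta)\|f\|_2^2$ for all small $\eps$ and all $f\in D(\mc E)$. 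This gives simultaneously: (i) a uniform lower bound $\la_1(\hat H_\eps)\ge -c^*(1/2)$; and (ii), via the min-max principle and the compact resolvent of $H$, a uniform upper bound $\la_k(\hat H_\eps)\le 2\la_k(H)+c^*(1/2)$ for each $k$, so that the eigenvalue sequences stay in a compact set. Second, prove \textbf{convergence of the forms} on the core: for $f\in\mr C^\infty_0$ one has $\langle f^2,\Xi_\eps'\rangle=\xi(f^2*\rho_\eps)$ by \eqref{Equation: Crucial Coupling}, and since $f^2*\rho_\eps\to f^2$ in $\mr{PC}_c$ (uniformly, with supports in a fixed compact set) while $\xi$ restricted to such approximations is continuous by the $L^2(\Om)$--$\|\cdot\|_\ga$ isometry combined with \eqref{Equation: gamma to Lp Bounds}, we get $\langle f^2,\Xi_\eps'\rangle\to\xi(f^2)$, i.e.\ $\hat{\mc E}_\eps(f,f)\to\hat{\mc E}(f,f)$, almost surely along a subsequence $\eps_n$ (a diagonal argument over a countable dense subset of $\mr C^\infty_0$ and monotonicity/continuity estimates upgrade this to all of $\mr C^\infty_0$). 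Third, \textbf{upgrade to resolvent convergence}: combining the uniform form bound with the pointwise convergence of forms on a common core, a standard argument (approximate a fixed $g\in L^2$, write the resolvent difference via the second resolvent identity at the level of forms, and use the uniform $H$-form-bound to control the error in the graph norm of $H^{1/2}$) yields $\|(\hat H_{\eps_n}+\la)^{-1}-(\hat H+\la)^{-1}\|\to0$ for $\la$ larger than all the semibounds. Fourth, \textbf{conclude}: since $\hat H$ and all $\hat H_{\eps_n}$ have compact resolvent, norm-resolvent convergence forces $\la_k(\hat H_{\eps_n})\to\la_k(\hat H)$ for every $k$, and convergence of the corresponding spectral projections in operator norm, from which $\|\psi_k(\hat H_{\eps_n})-\psi_k(\hat H)\|_2\to0$ follows after the customary choice of phases and, when $\hat H$ has a repeated eigenvalue $\la_k=\cdots=\la_{k+m}$, after choosing within the limiting eigenspace an orthonormal basis that the approximants converge to (this is the "relabeling" in the statement).

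The main obstacle is the \textbf{uniformity of the form bound in $\eps$} in Step 1: Proposition \ref{Proposition: Form Bound} as stated produces a random constant depending on $\Xi$, and one must show this constant does not blow up as the mollification parameter shrinks. The crux is a quantitative comparison of the suprema of increments of $\Xi_\eps$ with those of $\Xi$ on dyadic blocks going to infinity — essentially that $\sup_{x\in[n,n+1]}|\Xi_\eps(x)-\Xi_\eps(x-1)|\le\sup_{x\in[n-1,n+2]}|\Xi(x)-\Xi(y)|$ type bounds hold uniformly for small $\eps$, after which the Gaussian tail estimates already invoked in the paper (e.g.\ Corollary \ref{Corollary: Stationary Noise Suprema}) give summable-in-$n$ control with a common constant, so that the Borel--Cantelli step producing $c^*(\theta)$ is uniform in $\eps$. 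Once this uniformity is in hand, the remaining steps are the standard machinery connecting form convergence to norm-resolvent convergence for semibounded operators with compact resolvent, and the selection of the subsequence $(\eps_n)$ is forced precisely by the almost-sure convergence $\xi(f^2*\rho_{\eps_n})\to\xi(f^2)$ over a countable dense family.
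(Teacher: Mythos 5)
Your strategy is genuinely different from the paper's: you aim to establish norm-resolvent convergence of $\hat H_{\eps_n}$ to $\hat H$ and then invoke the general principle that, for self-adjoint operators with compact resolvent, norm-resolvent convergence yields convergence of eigenvalues and (after relabeling) of eigenfunctions. The paper never passes through resolvents at all; following the operator-limit literature it cites (Bloemendal--Vir\'ag, Ram\'irez--Rider--Vir\'ag), it works directly with the variational characterization of eigenvalues and a Rellich-type compactness lemma for $\|\cdot\|_*$-bounded sets (Lemma \ref{Lemma: L Star Compactness}), extracting convergent subsequences of eigenfunctions by hand. Your Steps~1 and~2 recover, in slightly different language, exactly the paper's Lemmas \ref{Lemma: Convergence Approximate Form Bound} and \ref{Equation: Noise Convergence 1}, and Step~4 is standard once Step~3 is in hand.

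The gap is in Step~3. The argument you sketch — ``approximate a fixed $g\in L^2$, write the resolvent difference via the second resolvent identity at the level of forms, and use the uniform $H$-form bound to control the error'' — produces, for each fixed $g$, the convergence of $\big((\hat H_{\eps_n}+\la)^{-1}-(\hat H+\la)^{-1}\big)g$, i.e., \emph{strong} resolvent convergence. Pointwise convergence of the forms on a common core, even together with a uniform infinitesimal form bound relative to $H$, does \emph{not} automatically upgrade this to operator-\emph{norm} convergence of the resolvents: one needs a uniform estimate over $\|g\|_2\le 1$, which is precisely where the argument becomes delicate. Moreover, strong resolvent convergence alone (even among operators with compact resolvent) only gives spectral exclusion, not the full eigenvalue convergence of \eqref{Equation: Operator Convergence}, so you genuinely need the norm version. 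The missing input is a collective-compactness statement: because the $\hat H_{\eps_n}$ are uniformly form-bounded by $H$, which has compact resolvent, the family $\{(\hat H_{\eps_n}+\la)^{-1}g: n\in\mbb N,\ \|g\|_2\le1\}$ lies in a fixed $\|\cdot\|_*$-ball, and the embedding of such a ball into $L^2$ is compact. This compactness of $\|\cdot\|_*$-balls is exactly what the paper isolates in Lemma \ref{Lemma: L Star Compactness}, so your route does not actually avoid that ingredient; it merely repackages it. As written, Step~3 asserts the norm convergence without supplying that compactness bridge, so it does not yet close the argument.
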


\begin{proposition}\label{Proposition: Semigroup Convergence}
For every $t>0$, it holds that
\begin{align}
\label{Equation: Semigroup L2 in Expectation}
\lim_{\eps\to0}\mbf E\big[\|\hat K_\eps(t)-\hat K(t)\|_2^2\big]=0
\end{align}
and
\begin{align}
\label{Equation: Semigroup Diagonal in Expectation}
\lim_{\eps\to0}\mbf E\bigg[\bigg(\int_I \hat K_\eps(t;x,x)-\hat K(t;x,x)\d x\bigg)^2\bigg]=0.
\end{align}
\end{proposition}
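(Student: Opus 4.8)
The plan is to reduce both statements to a single pointwise-in-$(x,y)$ estimate and then pass to the limit by dominated convergence. Fix $t>0$, let $Z$ denote $B$, $X$ or $Y$ according to the case, and write $\mr e^{A_\eps}$ and $\mr e^{A}$ for the integrands in the bridge expectations defining $\hat K_\eps(t;x,y)$ and $\hat K(t;x,y)$ in Definitions \ref{Definition: Approximate} and \ref{Definition: Random Semigroup}, so that $\hat K_\eps(t;x,y)=\Pi_Z(t;x,y)\,\mbf E^{x,y}_t[\mr e^{A_\eps}]$ and similarly for $\hat K(t)$; since the bridge is independent of $\Xi$, Tonelli's theorem lets us freely interchange $\mbf E$ (over $\Xi$) with $\mbf E^{x,y}_t$ (over the bridge, $\Xi$-conditional), which is what allows the Gaussian noise to be integrated out for a fixed bridge path below. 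The potential term $-\langle L_t(Z),V\rangle$ and the boundary terms $\bar\al\mf L^0_t(Z)+\bar\be\mf L^b_t(Z)$ (read as in Remark \ref{Remark: Infinity Times Zero}) occur identically in $A_\eps$ and $A$, and by the coupling \eqref{Equation: Crucial Coupling} applied to $f=L_t(Z)\in\mr{PC}_c$ — together with the fact recalled in Section \ref{Section: Stochastic Integral} that $\xi$ agrees almost surely on $\mr{PC}_c$ with its linear isometric extension $\xi^*$ — the remaining (noise) parts of the two exponents differ, conditionally on the bridge path, by the centered Gaussian variable $A_\eps-A=-\xi^*(g_\eps)$, where $g_\eps:=L_t(Z)*\rho_\eps-L_t(Z)\in\mr{PC}_c$ has conditional variance $\|g_\eps\|_\ga^2$. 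Combining the elementary inequality $|\mr e^{u}-\mr e^{v}|\le|u-v|(\mr e^{u}+\mr e^{v})$ with Jensen's inequality for $\mbf E^{x,y}_t$, the identity $|A_\eps-A|=|\xi^*(g_\eps)|$, the bound $(a+b)^2\le2a^2+2b^2$, and Tonelli's theorem yields, for all $x,y\in I$,
\[
\mbf E\big[|\hat K_\eps(t;x,y)-\hat K(t;x,y)|^2\big]\ \le\ 2\,\Pi_Z(t;x,y)^2\,\mbf E\big[\mbf E^{x,y}_t[|\xi^*(g_\eps)|^2(\mr e^{2A_\eps}+\mr e^{2A})]\big].
\]
Integrating in $(x,y)$ bounds $\mbf E[\|\hat K_\eps(t)-\hat K(t)\|_2^2]$, and an application of Minkowski's integral inequality (to pull the $x$-integral out of the $L^2(\Om)$-norm) bounds the left side of \eqref{Equation: Semigroup Diagonal in Expectation} by $\big(\int_I\Pi_Z(t;x,x)\,\mbf E\big[\mbf E^{x,x}_t[|\xi^*(g_\eps)|^2(\mr e^{2A_\eps}+\mr e^{2A})]\big]^{1/2}\d x\big)^2$.

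Next I would peel the Gaussian factor off by Hölder's inequality for a conjugate pair $1/p+1/p'=1$ with $p$ large, bounding the bracketed expectation above by
\[
\Big(\mbf E\big[\mbf E^{x,y}_t[|\xi^*(g_\eps)|^{2p}]\big]\Big)^{1/p}\bigg(\Big(\mbf E\big[\mbf E^{x,y}_t[\mr e^{2p'A_\eps}]\big]\Big)^{1/p'}+\Big(\mbf E\big[\mbf E^{x,y}_t[\mr e^{2p'A}]\big]\Big)^{1/p'}\bigg).
\]
Conditioning on the bridge path makes $\xi^*(g_\eps)$ centered Gaussian with variance $\|g_\eps\|_\ga^2$, so the first factor equals $c_p\big(\mbf E^{x,y}_t[\|g_\eps\|_\ga^{2p}]\big)^{1/p}$ for a constant $c_p$. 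Since $L_t(Z)$ is almost surely continuous with compact support, and $h*\rho_\eps\to h$ in $L^{q}(\mbb R)$ for every $1\le q<\infty$ whenever $h$ is such, \eqref{Equation: gamma to Lp Bounds} gives $\|g_\eps\|_\ga^2\le c_\ga\sum_i\|g_\eps\|_{q_i}^2\to0$ almost surely; moreover $\|g_\eps\|_{q_i}^2\le4\|L_t(Z)\|_{q_i}^2\le4\,t^{2/q_i}\big(\sup_a L^a_t(Z)\big)^{2-2/q_i}$ by Young's inequality and interpolation (using $\|L_t(Z)\|_1=t$), the right-hand side having finite moments of every order, bounded uniformly in $x,y$ for the bridge. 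Hence the first Hölder factor tends to $0$ as $\eps\to0$ for each fixed $x,y$, while remaining bounded by an $x,y$-independent constant.

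For the second factor, conditioning on the bridge path and integrating out the Gaussian noise — recall $\langle L_t(Z),\Xi_\eps'\rangle=\xi(L_t(Z)*\rho_\eps)$ is centered Gaussian with variance $\|L_t(Z)*\rho_\eps\|_\ga^2$, and $\xi(L_t(Z))$ has variance $\|L_t(Z)\|_\ga^2$ — one gets
\[
\Big(\mbf E\big[\mbf E^{x,y}_t[\mr e^{2p'A_\eps}]\big]\Big)^{1/p'}=\Big(\mbf E^{x,y}_t\big[\mr e^{-2p'\langle L_t(Z),V\rangle+2p'(\bar\al\mf L^0_t(Z)+\bar\be\mf L^b_t(Z))+2(p')^2\|L_t(Z)*\rho_\eps\|_\ga^2}\big]\Big)^{1/p'},
\]
and the same identity with $\|L_t(Z)\|_\ga^2$ in place of $\|L_t(Z)*\rho_\eps\|_\ga^2$ for $\hat K$. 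By Young's inequality $\|L_t(Z)*\rho_\eps\|_\ga^2\le c_\ga\sum_i\|L_t(Z)\|_{q_i}^2$, which is exactly the quantity that controls $\|L_t(Z)\|_\ga^2$; so this factor is bounded uniformly in $\eps$, and — choosing $p'$ close enough to $1$ — its product with $\Pi_Z(t;x,y)^2$ (resp.\ with $\Pi_Z(t;x,x)$) is integrable over $I^2$ (resp.\ over $I$), uniformly in $\eps$, by the Hilbert--Schmidt and trace-class moment estimates developed in Section \ref{Section: Semigroup} that yield $\mbf E[\|\hat K(t)\|_2^2]<\infty$ and the corresponding bound on the diagonal: those estimates are obtained precisely by bounding $\|L_t(Z)\|_\ga^2$ through \eqref{Equation: gamma to Lp Bounds}, and they have exponential room to absorb both a fixed constant multiple and the polynomial-in-$\sup_a L^a_t(Z)$ weight inherited from the first Hölder factor. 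Dominated convergence — applied successively inside the expectation, to produce the pointwise-in-$(x,y)$ limit, and then in the $(x,y)$- respectively $x$-integral — now yields \eqref{Equation: Semigroup L2 in Expectation} and \eqref{Equation: Semigroup Diagonal in Expectation}. (That $\hat K_\eps(t;\cdot,\cdot)$ and $\hat K(t;\cdot,\cdot)$ are Borel measurable, so that all these integrals are meaningful, is established in Appendix \ref{Appendix: Measurable}.)

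The only genuinely substantial ingredient is the uniform-in-$\eps$ integrability invoked at the end of the previous paragraph, and it is there that the growth hypothesis \eqref{Equation: Assumptions} on $V$ is essential: once the noise is integrated out one is left with the factor $\mr e^{\,\mr{const}\cdot\|L_t(Z)\|_\ga^2}$, which by \eqref{Equation: gamma to Lp Bounds} and interpolation is exponential in both $\sup_a L^a_t(Z)$ and the length of $\{L_t(Z)>0\}$ (the range of the bridge), so only a potential growing faster than $\log$ lets $\mr e^{-\mr{const}\cdot\langle L_t(Z),V\rangle}$ dominate it; for unbounded $I$ this is done for small $t$ and propagated to all $t>0$ via the semigroup identity \eqref{Equation: Approximate Semigroup Property}. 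The uniformity in $\eps$ itself costs nothing, since convolution with the probability density $\rho_\eps$ cannot increase any $L^{q_i}$-norm, so every $\eps$-level bound is dominated by its $\eps=0$ counterpart; no estimate beyond those already underlying the Hilbert--Schmidt claim of Theorem \ref{Theorem: Semigroup} is needed here. Everything else — the exponential inequality, Jensen/Hölder/Minkowski, the Gaussian moment identities, and mollifier convergence in $L^{q_i}$ — is routine.
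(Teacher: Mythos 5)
Your proposal takes a genuinely different route from the paper's, and there is a real gap in it.

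\textbf{What the paper does vs.\ what you do.}  The paper first expands $\mbf E\big[\|\hat K_\eps(t)-\hat K(t)\|_2^2\big]$ into the three cross terms, integrates out the Gaussian exactly (producing $\tfrac12\|\cdot\|_\ga^2$-exponents), and only \emph{then} performs the Chapman--Kolmogorov gluing that collapses the $(x,y)$-double integral over two independent bridges $Z^{1;x,y}_t,Z^{2;x,y}_t$ into a single $x$-integral over the diagonal bridge $Z^{x,x}_{2t}$ — this is the derivation of \eqref{Equation: L2 Convergence Formula}.  Every subsequent estimate (uniform integrability in Step~3, dominated convergence in Step~4) is then over $\mbf E^{x,x}_{2t}$, and Lemmas \ref{Lemma: Bridge Local Time}, \ref{Lemma: Bridge Self-Intersection}, and \ref{Lemma: Deterministic Compactness} apply verbatim because they are stated for the diagonal bridge $Z^{x,x}_t$.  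You do not perform this reduction: you stay with the pointwise-in-$(x,y)$ difference, replace the Gaussian MGF identity by the Lipschitz-type bound $|\mr e^u-\mr e^v|\le|u-v|(\mr e^u+\mr e^v)$ plus H\"older, and then want to dominate the $(x,y)$-integral directly.

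\textbf{The gap.}  Once you work pointwise in $(x,y)$, every estimate you invoke is an off-diagonal bridge moment: you need things like
\[
\sup_{x,y\in I,\ \eps>0}\ \mbf E^{x,y}_t\Big[\mr e^{\theta\|L_t(Z)\|_{q_i}^2}\Big]<\infty
\qquad\text{and}\qquad
\int_{I^2}\Pi_Z(t;x,y)^2\,\mbf E^{x,y}_t\Big[\mr e^{2p'\mf A_t(Z)+\theta\|L_t(Z)\|_\ga^2}\Big]^{1/p'}\d x\dd y<\infty,
\]
both of which concern $\mbf E^{x,y}_t$ with $x\ne y$.  The paper's lemmas (\ref{Lemma: Bridge Local Time}, \ref{Lemma: Bridge Self-Intersection}, \ref{Lemma: Deterministic Compactness}) give these bounds only for the diagonal bridge $\mbf E^{x,x}_t$; the crucial midpoint trick in their proofs --- conditioning $Z^{x,x}_t$ on $Z(t/2)=z$ to obtain two i.i.d.\ copies of $Z^{x,z}_{t/2}$ and then squaring --- degenerates when the endpoints differ, and the ratio $\Pi_Z(t/2;z,y)/\Pi_Z(t;x,y)$ is no longer controlled by the quantity $\mf s_t(Z)$ of Lemma \ref{Lemma: Transition Bounds} (whose denominator is diagonal).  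Your appeal to ``the Hilbert--Schmidt and trace-class moment estimates developed in Section~\ref{Section: Semigroup}'' therefore does not close this: those estimates are obtained \emph{after} the semigroup reduction to the diagonal, not before it.  The off-diagonal bounds are very likely true and can probably be recovered (e.g.\ by dominating $\Pi_Z(t;x,y)^2\mbf E^{x,y}_t[\cdot]$ by $\big(\sup\Pi_Z\big)\cdot\Pi_Z(t;x,y)\mbf E^{x,y}_t[\cdot]$ and integrating out $y$ to land on the \emph{unconditioned} expectations $\mbf E^x[\cdot]$, for which Lemmas \ref{Lemma: Regular Local Time} and \ref{Lemma: Regular Self-Intersection} do apply), but that is genuine new work which the proposal treats as already available.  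Note that your trace estimate \eqref{Equation: Semigroup Diagonal in Expectation} is on much firmer footing, since the Minkowski reduction keeps $x=y$ throughout and the diagonal lemmas really do suffice there.

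\textbf{Two smaller inaccuracies.}  (i)~Your closing remark that the dominating bound ``is done for small $t$ and propagated to all $t>0$ via the semigroup identity \eqref{Equation: Approximate Semigroup Property}'' does not match the paper: Lemma \ref{Lemma: Deterministic Compactness} holds for all $t>0$ directly, and the small-$t$ bootstrap in the paper is inside the proof of Lemma \ref{Lemma: Regular Self-Intersection}, not in the dominated-convergence step; moreover the semigroup identity for $\hat K_\eps$ does not propagate a mean-square convergence statement.  (ii)~The passage from $\xi$ to its linear extension $\xi^*$ for the random argument $L_t(Z)$ requires a Fubini argument (for a.e.\ fixed bridge path $\omega$ one has $\xi(L_t(\omega))=\xi^*(L_t(\omega))$ a.s.\ over $\Xi$, then integrate over $\omega$); you gesture at this but it should be said, since it is exactly the kind of measurability subtlety the paper relegates to Appendix \ref{Appendix: Measurable}.
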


\subsubsection{Step 3. Feynman-Kac Formula}
\label{Section: Main Outline Part 3}

We are now in a position to prove Theorem \ref{Theorem: Semigroup}.
We begin by proving that for every $t>0$, $\mr e^{-t\hat H}=\hat K(t)$
almost surely.
Let us fix some $t>0$.
By Propositions \ref{Proposition: Approximate Operator Properties}--\ref{Proposition: Semigroup Convergence},
almost surely, there exists a vanishing sequence $(\eps_n)_{n\in\mbb N}$
such that \eqref{Equation: Approximate Symmetry Property}--\eqref{Equation: Approximate Spectral Expansion}
holds for every $\eps_n$, and along which the limits
\eqref{Equation: Operator Convergence} and
\begin{align}
\label{Equation: Semigroup Convergence Subsequence}
\lim_{n\to\infty}\|\hat K_{\eps_n}(t)-\hat K(t)\|_2=0
\end{align}
hold.
For the remainder of this step, we assume that we are working with
an outcome in this probability-one event.

Since the space $L^2(I\times I)$ of Hilbert-Schmidt integral
operators on $L^2$ is complete, \eqref{Equation: Semigroup Convergence Subsequence}
means that $\|\hat K(t)\|_2<\infty$. In particular, $\hat K(t)$ is compact.
Furthermore, given that convergence in Hilbert-Schmidt norm implies
weak operator convergence and every $\hat K_{\eps_n}(t)=\mr e^{-t\hat H_{\eps_n}}$ is
nonnegative and symmetric, this implies that $\hat K(t)$ is nonnegative and symmetric, hence self-adjoint
(e.g., \cite[Theorems 4.28 and 6.11]{Weidmann}).
By the spectral theorem for compact self-adjoint operators
(e.g., \cite[Theorems 5.4 and 5.6]{Teschl12}),
we then know that there exists an orthonormal basis $(\Psi_k)_{k\in\mbb N}\subset L^2$
and nonnegative numbers $\La_1\geq \La_2\geq \La_3\geq\cdots \geq0$ such that $\hat K(t)$ satisfies
\[\hat K(t)f=\sum_{k=1}^\infty \La_k\langle\Psi_k,f\rangle\Psi_k,\qquad f\in L^2.\]
Consequently, to prove that $\mr e^{-t\hat H}=\hat K(t)$, we need only show that
$\hat K(t)$'s spectral expansion is equivalent to \eqref{Equation: H hat Semigroup Expansion}.

On the one hand, since the Hilbert-Schmidt norm dominates the operator norm,
it follows from \eqref{Equation: Semigroup Convergence Subsequence} that
$\|\hat K_{\eps_n}(t)-\hat K(t)\|_{\mr{op}}\to0$;
hence $\mr e^{-t\la_k({\hat H}_{\eps_n})}\to \La_k$
for all $k\in\mbb N$ by \eqref{Equation: Approximate Spectral Expansion}.
Given that $\la_k({\hat H}_{\eps_n})\to\la_k(\hat H)$
by \eqref{Equation: Operator Convergence}, we conclude that
$\La_k=\mr e^{-t\la_k(\hat H)}$ for all $k\in\mbb N$.
On the other hand, we note that
\begin{align*}
&\|\hat K_{\eps_n}(t)\psi_k({\hat H}_{\eps_n})- \hat K(t) \psi_k({\hat H})\|_2\\
&\leq	\|\hat K_{\eps_n}(t)\psi_k({\hat H}_{\eps_n})-\hat K_{\eps_n}(t)\psi_k({\hat H})\|_2
+\|\hat K_{\eps_n}(t)\psi_k({\hat H})- \hat K(t) \psi_k({\hat H})\|_2\\
&\leq\|\hat K_{\eps_n}(t)\|_{\mr{op}}\|\psi_k({\hat H}_{\eps_n})-\psi_k({\hat H})\|_2+\|\hat K_{\eps_n}(t)- \hat K(t) \|_{\mr{op}}.
\end{align*}
This vanishes as $n\to\infty$ for all $k\in\mbb N$.
Moreover, the spectral expansion \eqref{Equation: Approximate Spectral Expansion}
and the limit \eqref{Equation: Operator Convergence} imply that
\[\lim_{n\to\infty}\hat K_{\eps_n}(t)\psi_k({\hat H}_{\eps_n})
=\lim_{n\to\infty}\mr e^{-t\la_k({\hat H}_{\eps_n})}\psi_k({\hat H}_{\eps_n})
=\mr e^{-t\la_k(\hat H)}\psi_k({\hat H})\]
in $L^2$; hence $ \hat K(t) \psi_k({\hat H})=\mr e^{-t\la_k(\hat H)}\psi_k({\hat H})$. 
Thus $\big(\mr e^{-t\la_k({\hat H})},\psi_k({\hat H})\big)_{k\in\mbb N}$ can be taken as the
eigenvalue-eigenfunction pairs for $\hat K(t)$, concluding the proof
that $\hat K(t)=\mr e^{-t\hat H}$.

\subsubsection{Step 4. Trace Formula}
\label{Section: Main Outline Part 4}

Next, we prove
Theorem \ref{Theorem: Semigroup} (2), that is,
for every $t>0$,
$\mr{Tr}[\mr e^{-t\hat H}]=\int_I\hat K(t;x,x)\d x<\infty$ almost surely.
Let $t>0$ be fixed.
By Propositions \ref{Proposition: Approximate Operator Properties} and \ref{Proposition: Semigroup Convergence},
we can find a vanishing sequence $(\eps_n)_{n\in\mbb N}$
such that \eqref{Equation: Approximate Symmetry Property}--\eqref{Equation: Approximate Spectral Expansion}
hold for all $\eps_n$ and along which
\begin{align}
\label{Equation: Semigroup Convergence Subsequence 2}
\lim_{n\to\infty}\|\hat K_{\eps_n}(t/2)-\hat K(t/2)\|_2=0
\quad\text{and}\quad
\lim_{n\to\infty}\left|\int_I \hat K_{\eps_n}(t;x,x)-\hat K(t;x,x)\d x\right|=0
\end{align}
almost surely. Since $\mr e^{-t\hat H}$ is by definition
a semigroup, we have that
\begin{align}
\label{Equation: Trace = HS halved}
\mr{Tr}[\mr e^{-t\hat H}]=\sum_{k=1}^\infty\left(\mr e^{-(t/2)\la_k(\hat H)}\right)^2=\|\mr e^{-(t/2)\hat H}\|_2^2.
\end{align}
Then, by combining the symmetry and semigroup properties
\eqref{Equation: Approximate Symmetry Property} and \eqref{Equation: Approximate Semigroup Property},
the almost sure convergences  \eqref{Equation: Semigroup Convergence Subsequence 2}, and
the almost sure equality $\hat K(t/2)=\mr e^{-(t/2)\hat H}$ established in the previous step
of this proof, we obtain that
\begin{align*}
&\|\mr e^{-(t/2)\hat H}\|_2^2
=\|\hat K(t/2)\|_2^2
=\lim_{n\to\infty}\|\hat K_{\eps_n}(t/2)\|_2^2\\
&=\lim_{n\to\infty}\int_{I^2}\hat K_{\eps_n}(t/2;x,y)^2\d y\dd x
=\lim_{n\to\infty}\int_I\left(\int_I\hat K_{\eps_n}(t/2;x,y)\hat K_{\eps_n}(t/2;y,x)\d y\right)\dd x\\
&=\lim_{n\to\infty}\int_I\hat K_{\eps_n}(t;x,x)\d x
=\int_I\hat K(t;x,x)\d x
\end{align*}
almost surely. Since we know that $\|\hat K(t/2)\|_2<\infty$
almost surely from the previous step,
this concludes the proof of Theorem \ref{Theorem: Semigroup} (2).

\subsubsection{Step 5. Last Properties}
\label{Section: Main Outline Part 5}

We now conclude the proof of Theorem \ref{Theorem: Semigroup}
by showing that, almost surely, $\mr e^{-t\hat H}$ is a
Hilbert-Schmidt/trace class integral operator for every $t>0$.
By combining \eqref{Equation: Trace = HS halved} with the
fact that every Hilbert-Schmidt operator on $L^2$ has an
integral kernel in $L^2(I\times I)$ (e.g., \cite[Theorem 6.11]{Weidmann}),
we need only prove that, almost surely, $\mr e^{-t\hat H}$ is trace
class for all $t>0$.

In the previous step of this proof, we have already shown the weaker statement
that, for every $t>0$, $\mr{Tr}[\mr e^{-t\hat H}]<\infty$ almost surely. By a countable intersection
we can extend this to the statement that there exists a probability-one event
on which $\mr{Tr}[\mr e^{-t\hat H}]<\infty$ for every $t\in\mbb Q\cap(0,\infty)$.
Since $\la_k(\hat H)\to\infty$ as $k\to\infty$,
there exists some $k_0\in\mbb N$ such that $\la_k(\hat H)>0$ for every $k>k_0$.
Since $\sum_{k=1}^{k_0}\mr e^{-t\la_k(\hat H)}$
is finite for every $t$ and
$\sum_{k=k_0+1}^\infty\mr e^{-t\la_k(\hat H)}$
is monotone decreasing in $t$, the fact that $\mr{Tr}[\mr e^{-t\hat H}]<\infty$
holds for $t\in\mbb Q\cap(0,\infty)$ implies that it holds for all $t>0$,
concluding the proof of Theorem \ref{Theorem: Semigroup}.

\begin{remark}\label{Remark: Comparison 2}
In contrast to the proofs of \cite[Proposition 1.8 (a)]{GaudreauLamarreShkolnikov}
and \cite[Corollary 2.2]{GorinShkolnikov} (which we recall apply to Case 2 with $V(x)=x$),
the argument presented here uses smooth approximations of ${\hat K}(t)$ rather than
random matrix approximations. Since the present paper does not deal with convergence of random matrices,
this choice is natural, and it allows to sidestep several technical difficulties involved with discrete models.
With this said, the proof of \eqref{Equation: Operator Convergence} is inspired by the convergence result
for the spectrum of random matrices in \cite[Section 2]{BloemendalVirag} and \cite[Section 5]{RamirezRiderVirag}.
We refer to Section \ref{Section: Semigroup} for the details.
\end{remark}

\subsection{Proof of Proposition \ref{Proposition: Examples}}\label{Section: Examples Outline}

The main technical result in the proof of Proposition \ref{Proposition: Examples}
is the following estimate, which is a direct consequence of \cite[Lemma 4.2]{GaudreauLamarreGhosalLiao}
(as shown in \cite{GaudreauLamarreGhosalLiao}, \eqref{Equation: gamma to Lp bounds Example} is a
straightforward consequence
of Young's convolution inequality).

\begin{proposition}\label{Proposition: Gamma to Lp Bounds}
Using the notations of Example \ref{Example: Four Noises},
there exists a constant $c_\ga>0$ such that
for every $f\in \mr{PC}_c$, it holds that
\begin{align}
\label{Equation: gamma to Lp bounds Example}
\|f\|_\ga^2\leq
\begin{cases}
c_\ga\|f\|_1^2&\text{(bounded noise)}\\
c_\ga\|f\|_2^2&\text{(white noise)}\\
c_\ga \big(\|f\|_2^2+\|f\|_1^2\big)&\text{(fractional noise with $\mf H\in(\tfrac12,1)$)}\\
c_\ga\big(\sum_{i=1}^\ell \|f\|_{1/(1-1/2p_i)}^2+\|f\|_1^2\big)&\text{($L^p$-singular noise with $p_i\geq1$)}.
\end{cases}
\end{align}
\end{proposition}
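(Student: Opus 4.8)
The plan is to reduce the whole proposition to two elementary ingredients: a direct computation for white noise, and a single convolution-type inequality that handles all the other cases uniformly. Throughout, note that since $f\in\mr{PC}_c$ is bounded with compact support it lies in every $L^q(\mbb R)$, so all the norms below are automatically finite; moreover, since $\langle\cdot,\cdot\rangle_\ga$ is a semi-inner-product, $\|f\|_\ga^2=\langle f,f\rangle_\ga=|\langle f,f\rangle_\ga|$, so it suffices throughout to bound the absolute value of the bilinear form.

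The key step is the following lemma, which I would prove first: for any $g\in L^p(\mbb R)$ with $1\le p\le\infty$ and any $f\in\mr{PC}_c$,
\[
\Big|\int_{\mbb R^2}f(x)g(x-y)f(y)\d x\dd y\Big|=\big|\langle f,g*f\rangle\big|\le\|g\|_p\,\|f\|_{s(p)}^2,\qquad s(p):=\frac{2p}{2p-1}
\]
(with the convention $s(\infty):=1$). This follows by chaining two standard inequalities: Hölder's inequality with the conjugate pair $2p$ and $s(p)$ gives $|\langle f,g*f\rangle|\le\|g*f\|_{2p}\|f\|_{s(p)}$, and then Young's convolution inequality gives $\|g*f\|_{2p}\le\|g\|_p\|f\|_{s(p)}$, which is valid because $1+\tfrac1{2p}=\tfrac1p+\tfrac1{s(p)}$ and all three exponents lie in $[1,\infty]$. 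Observe that $s(1)=2$, $s(\infty)=1$, and $s(p_i)=1/(1-1/2p_i)$, which is exactly the exponent appearing in the statement.

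With the lemma in hand the four cases are quick. For \textbf{bounded noise}, taking $p=\infty$ gives $\|f\|_\ga^2\le\|\ga\|_\infty\|f\|_1^2$. For \textbf{white noise}, $\ga=\si^2\de_0$ is not a function, so one works with the convolution pairing: $\|f\|_\ga^2=\langle f*\mf rf,\ga\rangle=\si^2(f*\mf rf)(0)=\si^2\int_{\mbb R}f(x)^2\d x=\si^2\|f\|_2^2$. For \textbf{$L^p$-singular noise}, decompose $\ga=\ga_1+\dots+\ga_\ell+\ga_\infty$ as in Example \ref{Example: Four Noises}, apply the lemma to each $\ga_i\in L^{p_i}$ and to $\ga_\infty\in L^\infty$, and sum; taking $c_\ga:=\max\{\|\ga_1\|_{p_1},\dots,\|\ga_\ell\|_{p_\ell},\|\ga_\infty\|_\infty\}$ yields $c_\ga\big(\sum_{i=1}^\ell\|f\|_{1/(1-1/2p_i)}^2+\|f\|_1^2\big)$. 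Finally, \textbf{fractional noise} with $\mf H\in(\tfrac12,1)$ is a special case of the previous one: writing $\ga=\ga\mbf 1_{\{|x|\le1\}}+\ga\mbf 1_{\{|x|>1\}}$, the first term lies in $L^1$ (the exponent $2\mf H-2>-1$, so the origin singularity is integrable) and the second lies in $L^\infty$ (since $2\mf H-2<0$, so $|x|^{2\mf H-2}\le1$ for $|x|>1$), which is the $L^p$-singular case with $\ell=1$ and $p_1=1$, giving $c_\ga(\|f\|_2^2+\|f\|_1^2)$.

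The argument is mostly bookkeeping; the one genuine point is the exponent selection in the lemma, where Hölder and Young must be chained so that \emph{both} factors of $f$ end up carrying the same $L^{s(p)}$ norm — this forces $1/s(p)=1/s'(p)+1-1/p$ together with $1/s(p)+1/s'(p)=1$, whose solution is $s(p)=2p/(2p-1)$. A secondary care point is the distributional interpretation in the white-noise case, where one must use the pairing $\langle f*\mf rf,\ga\rangle$ rather than the (undefined) double integral against $\ga$.
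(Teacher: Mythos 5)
Your proof is correct and is essentially the same argument the paper gives: each case is handled by a direct computation (white noise) or by combining H\"older's inequality with Young's convolution inequality after splitting $\ga$ into integrable and bounded pieces, and the fractional case is resolved via the same near-origin/far-field decomposition $\ga = \ga\mbf 1_{\{|x|\le1\}}+\ga\mbf 1_{\{|x|>1\}}$. The only difference is presentational: you extract the H\"older--Young chain as a standalone lemma with a unified exponent $s(p)=2p/(2p-1)$ and then deduce the fractional case as a sub-case of $L^p$-singular noise with $\ell=1$, $p_1=1$, whereas the paper writes each case out inline and invokes the trilinear form of Young's inequality (equivalently $1/q_i+1/q_i+1/p_i=2$) directly.
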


Whenever $\ga$ is such that $\langle\cdot,\cdot\rangle_\ga$ is a semi-inner-product,
we know from standard existence theorems that there exists a Gaussian process $\Xi$
on $\mbb R$ with covariance 
\eqref{Equation: Xi Covariance}. As argued in Remark \ref{Remark: Redundancy}, such a process
must have stationary increments.
To see that such $\Xi$ have continuous versions, we note that for any $1\leq q\leq 2$ and
and $x<y$ such that $y-x\leq 1$, one has
$\|\mbf 1_{[x,y)}\|_q^4=(y-x)^{4/q}$ with $4/q>1$.
Thus, given that $1/(1-1/2p)\in(1,2]$ for every $p\geq1$,
it follows from Proposition \ref{Proposition: Gamma to Lp Bounds} that
there exists some constants $c,r>0$ such that
\[\mbf E\big[\big(\Xi(x)-\Xi(y)\big)^4\big]=3!\,\|\mbf 1_{[x,y)}\|_\ga^4\leq c|x-y|^{1+r}\]
for every $x<y\in\mbb R$. The existence of
a continuous version then follows from the
classical Kolmogorov criterion (e.g., \cite[Section 14.1]{MarcusRosen}).

\section{Proof of Propositions \ref{Proposition: Noise Continuity} and \ref{Proposition: Operator}}\label{Section: Operator}

In this section, we complete the proof 
of Propositions \ref{Proposition: Noise Continuity} and \ref{Proposition: Operator}.
Following-up on the outline in Section \ref{Section: Operator Outline},
it only remains to prove Proposition \ref{Proposition: Form Bound}.

\subsection{Step 1. Reduction to a Simple Inequality}

We begin by showing that Proposition \ref{Proposition: Form Bound} can be entirely reduced
to the following claim: Almost surely, for every $\theta>0$, there exists $c=c(\theta)>0$ such that
\begin{align}\label{Equation: Form Bound Reduction}
|\xi(f^2)|\leq\theta\big(\tfrac12\|f'\|_2^2+\|V^{1/2}f\|_2^2\big)+ c\|f\|_2^2
\end{align}
for every $f\in\mr{C}^\infty_0$.

This is easiest to see
in Cases 1, 2-D, and 3-D: On the one hand, in those cases
\eqref{Equation: Form Bound Reduction} directly implies
\eqref{Equation: Infinitesimal Form Bound} for all $f\in\mr{FC}$, which we can
then extend to every $f\in D(\mc E)$ since $\mr{FC}$ is a form core for $\mc E$.
On the other hand,
\eqref{Equation: Form Bound Reduction} implies that
$|\xi(f^2)|\leq\max\{\theta,c\}\|f\|_*^2$, which yields \eqref{Equation: Noise Continuity}.
With \eqref{Equation: Noise Continuity} established, the unique continuous extension
of $\xi(f^2)$ to $\mr H^1_V$ then follows from the fact that
$C_0^\infty$ is dense in the Hilbert space $(\mr H^1_V,\langle\cdot,\cdot\rangle_*)$.

To see how \eqref{Equation: Form Bound Reduction} implies the desired estimate in other cases,
let us consider for example Case 2-R: By \eqref{Equation: Form Bound Reduction},
almost surely,
for every $\bar\theta>0$ there exists $\bar c>0$ such that
\[|\xi(f^2)|\leq\bar\theta\big(\tfrac12\|f'\|_2^2+\|V^{1/2}f\|_2^2\big)+ \bar c\|f\|_2^2
=\bar\theta\mc E(f,f)+\tfrac{\bar\theta\al}2f(0)^2+\bar c\|f\|_2^2.\]
At this point, controlling $f(0)^2$ with Lemma \ref{Lemma: Pointwise Form Bound} yields
the desired estimate (with the straightforward substitution $\theta:=\bar\theta(1+\tfrac{\al\ka}2)$).
Cases 3-R and 3-M can be dealt with in the same way.

\subsection{Step 2. Proof of \eqref{Equation: Form Bound Reduction}}

We now complete the proof of Proposition \ref{Proposition: Form Bound}
by proving \eqref{Equation: Form Bound Reduction}.
We begin with Cases 1 and 2. Following \cite{Minami,RamirezRiderVirag},
we define the integrated process
\[\tilde\Xi(x):=\int_x^{x+1}\Xi(y)\d y,\qquad x\in\mbb R\]
so that we can write $\Xi(x)=\tilde\Xi(x)+\big(\Xi(x)-\tilde\Xi(x)\big)$; hence for every $f\in\mr C^\infty_0$, one has
\[\xi(f^2)=-\langle 2f'f,\tilde\Xi\rangle-\langle 2f'f,\Xi-\tilde\Xi\rangle=
f^2(0)\tilde\Xi(0)+\langle f^2,\tilde\Xi'\rangle+2\langle f'f,\tilde\Xi-\Xi\rangle\]
by Definition \ref{Definition: Operator Noise} and an integration by parts.
By applying Lemma \ref{Lemma: Pointwise Form Bound} to the term
$f^2(0)\tilde\Xi(0)$ (since $|\tilde\Xi(0)|<\infty$ whenever $\Xi$'s path is continuous),
it suffices to prove that almost surely, for every $\theta>0$, there exists $c>0$ such that
\[|\langle f^2,\tilde\Xi'\rangle|+2|\langle f'f,\tilde\Xi-\Xi\rangle|\leq\theta\big(\tfrac12\|f'\|_2^2+\|V^{1/2}f\|_2^2\big)+ c\|f\|_2^2\]
for all $f\in\mr C^\infty_0$.
Thanks to Assumption \ref{Assumption: FN},
the processes $x\mapsto\tilde \Xi'(x)$ and $x\mapsto\tilde \Xi(x)-\Xi(x)$ are
continuous stationary centered Gaussian processes on $\mbb R$,
and thus it follows from standard Gaussian suprema estimates (e.g., Corollary \ref{Corollary: Stationary Noise Suprema}) that
there exists a finite random variable $C>0$ such that,
almost surely,
\begin{align}\label{Equation: Integrated Noise Bounds}
|\tilde\Xi'(x)|,
\big(\Xi(x)-\tilde\Xi(x)\big)^2\leq C\log(2+|x|)
\end{align}
for all $x\in I$. 
Since $V(x)\gg\log|x|$ as $|x|\to\infty$, for every $\theta>0$, there exists $\tilde c_1,\tilde c_2>0$
depending on $\theta$ such that
\begin{align}
\label{Equation: Integrated Noise Bounds 2}
C\log(2+|x|)\leq\tfrac\theta2\big(\tilde c_1+V(x)\big),\qquad \sqrt{C\log(2+|x|)}\leq\tfrac\theta2\sqrt{\tilde c_2+V(x)}
\end{align}
for all $x\in I$.
On the one hand, \eqref{Equation: Integrated Noise Bounds} and the above inequality imply that
\[\int_I f(x)^2|\tilde\Xi'(x)|\d x\leq\tfrac\theta2\|V^{1/2}f\|_2^2+\tfrac{\theta\tilde c_1}2\|f\|_2^2.\]
On the other hand, the same inequalities and $|z\bar z|\leq \frac12(z^2+\bar z^2)$ imply
\begin{multline*}
\int_I|f'(x)f(x)|\big|\tilde\Xi(x)-\Xi(x)\big|\d x
\leq\frac\theta2\int_I|f'(x)f(x)|\sqrt{\tilde c_2+V(x)}\d x\\
\leq\frac\theta2\left(\int_If'(x)^2\d x+\int_If(x)^2\big(\tilde c_2+V(x)\big)\d x\right)
\leq\tfrac\theta2\big(\|f'\|_2^2+\|V^{1/2}f\|_2^2\big)+\tfrac{\theta\tilde c_2}2\|f\|_2^2,
\end{multline*}
concluding the proof.

Suppose then that we are in Case 3. Since $\Xi$ is almost surely continuous
by Assumption \ref{Assumption: FN},
the random variable $C:=\sup_{0\leq x\leq b}|\Xi(x)|$ is finite, and thus
\[|\xi(f^2)|\leq 2C\int_0^b|f'(x)|\,|f(x)|\d x+|\Xi(b)|f(b)^2.\]
An application of the bound $|f'|\,|f|\leq \frac{\ka}2(f')^2+\frac{1}{2\ka}f^2$
for arbitrary $\ka>0$ followed by
Lemma \ref{Lemma: Pointwise Form Bound} to $f(b)^2$ then
yield an upper bound of the form $|\xi(f^2)|\leq\tfrac\theta2\|f'\|_2^2+ c\|f\|_2^2$,
which is better than \eqref{Equation: Form Bound Reduction}.

\section{Proof of Theorem \ref{Theorem: Semigroup}}\label{Section: Semigroup}

In this section, we complete the outline for the proof of Theorem \ref{Theorem: Semigroup}
provided in Section \ref{Section: Semigroup Outline} by proving Propositions
\ref{Proposition: Approximate Operator Properties}--\ref{Proposition: Semigroup Convergence}.
This is done in Sections
\ref{Section: First Semigroup Proposition}--\ref{Section: Last Semigroup Proposition} below.
Before we do this, however, we need several technical results regarding the deterministic semigroup $\mr e^{-t H}$
and the behaviour of the local times $L_t(Z)$ and $\mf L_t(Z)$.
This is done in Sections \ref{Section: First Semigroup Technical}--\ref{Section: Last Semigroup Technical}.

\subsection{Feynman-Kac Formula for Deterministic Operators}\label{Section: First Semigroup Technical}

We begin by recording some standard results in semigroup theory.
By the Feynman-Kac formula, we expect that $\mr e^{-t H}=K(t)$
for the kernels $K(t)$ defined as follows:

\begin{definition}\label{Definition: Deterministic Kernel}
With the same notations as in Definitions \ref{Definition: Stochastic Processes Etc}
and \ref{Definition: Random Semigroup},
for every $t>0$, we define the kernel $ K(t):I^2\to\mbb R$ as
\begin{align}
\label{Equation: Deterministic Kernel}
K(t;x,y):=\begin{cases}
\Pi_B(t;x,y)\,\mbf E^{x,y}_t\Big[\mr e^{-\langle L_t(B),V\rangle}\Big]&\text{(Case 1)}
\vspace{5pt}
\\
\Pi_X(t;x,y)\,\mbf E^{x,y}_t\Big[\mr e^{-\langle L_t(X),V\rangle+\bar\al \mf L^0_t(X)}\Big]&\text{(Case 2)}
\vspace{5pt}
\\
\Pi_Y(t;x,y)\,\mbf E^{x,y}_t\Big[\mr e^{-\langle L_t(Y),V\rangle+\bar\al \mf L^0_t(Y)+\bar\be \mf L^b_t(Y)}\Big]&\text{(Case 3)}
\end{cases}
\end{align}
\end{definition}

To prove this,
we begin with a reminder regarding the Kato class of potentials.

\begin{definition}
We define the Kato class, which we denote by $\mc K=\mc K(I)$, as the collection of nonnegative functions $f:I\to\mbb R$
such that
\begin{align}\label{Equation: Kato Class}
\sup_{x\in I}\int_{\{y\in I:|x-y|\leq 1\}}f(y)\d y<\infty.
\end{align}
We use $\mc K_{\mr{loc}}=\mc K_{\mr{loc}}(I)$ to denote the class of $f$'s such that
$f\mbf 1_K\in \mc K$ for every compact subset $K$ of $I$'s closure.
\end{definition}

\begin{remark}
There is a large diversity of equivalent definitions of the Kato class, some of which
are probabilistic. See, for instance,
\cite[Section A.2]{Simon}.
\end{remark}

\begin{theorem}\label{Theorem: Deterministic Feynman-Kac}
If $V\in\mc K_{\mr{loc}}$, then $\mr e^{- tH}=K(t)$ for all $t>0$.
Moreover,
\begin{align}
\label{Equation: Deterministic Symmetry Property}
&K(t;x,y)=K(t;y,x),&& t>0,~x,y\in I;\\
\label{Equation: Deterministic Semigroup Property}
&\int_IK(t;x,z)K(\bar t;z,y)\d z=K(t+\bar t;x,y),&& t,\bar t>0,~x,y\in I.
\end{align}
\end{theorem}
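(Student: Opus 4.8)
The plan is to establish Theorem~\ref{Theorem: Deterministic Feynman-Kac} by reducing it to the classical Feynman--Kac formula for Schr\"odinger operators with Kato-class potentials, dealing separately with the three domains and with the boundary terms encoded in $\bar\al$ and $\bar\be$. First I would treat \textbf{Case 1}, which is essentially textbook: since $V\in\mc K_{\mr{loc}}$ and $V\geq0$, the semigroup $\mr e^{-tH}$ on $L^2(\mbb R)$ is given by the kernel $\Pi_B(t;x,y)\,\mbf E^{x,y}_t[\mr e^{-\int_0^t V(B(s))\d s}]$ (e.g.\ \cite[Theorem A.2.7]{Simon}), and rewriting $\int_0^t V(B(s))\d s=\langle L_t(B),V\rangle$ via the occupation-times formula \eqref{Equation: Interior Local Time} gives exactly $K(t;x,y)$ in \eqref{Equation: Deterministic Kernel}. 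The symmetry \eqref{Equation: Deterministic Symmetry Property} and the semigroup identity \eqref{Equation: Deterministic Semigroup Property} are then immediate consequences of $\mr e^{-tH}$ being self-adjoint and a semigroup, together with uniqueness of integral kernels in $L^2(I\times I)$; alternatively they can be read off directly from the reversibility of Brownian bridge and the Chapman--Kolmogorov/Markov property of $B$, which is the route I would actually write down since it does not require knowing a priori that $K(t)$ is the semigroup kernel.

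Next I would handle \textbf{Case 2}. For \textbf{Case 2-D} the relevant process is Brownian motion on $(0,\infty)$ killed at the origin, whose transition density is $\Pi_X^{\mr{kill}}(t;x,y)=\ms G_t(x-y)-\ms G_t(x+y)$; however, the kernel in \eqref{Equation: Deterministic Kernel} is written in terms of the \emph{reflected} process $X$ with density $\Pi_X(t;x,y)=\ms G_t(x-y)+\ms G_t(x+y)$ and the factor $\mr e^{\bar\al\mf L^0_t(X)}$ with $\bar\al=-\infty$. By the convention in Remark~\ref{Remark: Infinity Times Zero}, $\mr e^{-\infty\cdot\mf L^0_t(X)}=\mbf 1_{\{\tau_0(X)>t\}}$, so conditioning the reflected bridge on not hitting $0$ recovers precisely the killed bridge, and $\Pi_X(t;x,y)\,\mbf E^{x,y}_{t,X}[\mbf 1_{\{\tau_0>t\}}\,\cdots]=\Pi_X^{\mr{kill}}(t;x,y)\,\mbf E^{x,y}_{t,\mr{kill}}[\cdots]$ by the decomposition of the reflected-bridge law. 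For \textbf{Case 2-R} I would invoke the classical fact (going back to the construction of Sturm--Liouville semigroups via reflected Brownian motion and its boundary local time, e.g.\ \cite{FukushimaNakao} or the discussion in \cite{BloemendalVirag}) that the Robin operator $-\tfrac12\De+V$ with $f'(0)+\al f(0)=0$ has Feynman--Kac kernel $\Pi_X(t;x,y)\,\mbf E^{x,y}_t[\mr e^{-\langle L_t(X),V\rangle+\al\mf L^0_t(X)}]$, where $\mf L^0_t$ is the symmetric (``two-sided'') boundary local time as normalized in Definition~\ref{Definition: Stochastic Processes Etc}; the factor $\mr e^{\al\mf L^0_t}$ is precisely the Feynman--Kac weight associated to the boundary condition. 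I would verify the normalization of $\mf L^0_t$ is the one for which this identity holds (the factor $\tfrac1{2\eps}$ in Definition~\ref{Definition: Stochastic Processes Etc} is chosen for this), and then symmetry and the semigroup property follow as in Case~1 from the reversibility of the reflected Brownian bridge and the Markov property of $X$, noting that $\mf L^0$ and $L$ are additive functionals so that $\mf L^0_{t+\bar t}=\mf L^0_t\circ\theta_0+\mf L^0_{\bar t}\circ\theta_t$ along concatenated paths.

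\textbf{Case 3} is analogous: $Y$ is reflected Brownian motion on $(0,b)$, whose transition density $\Pi_Y(t;x,y)=\sum_{z\in2b\mbb Z\pm y}\ms G_t(x-z)$ is the image of the real-line heat kernel under the reflection group generated by $0$ and $b$, and the two boundary conditions at $0$ and $b$ contribute the independent local-time weights $\mr e^{\bar\al\mf L^0_t(Y)}$ and $\mr e^{\bar\be\mf L^b_t(Y)}$, with Dirichlet conditions again obtained as the $-\infty$ limit (killing at the corresponding endpoint). Since $(0,b)$ is a bounded interval and $V$ is locally integrable hence in $\mc K_{\mr{loc}}$, $H$ is a regular Sturm--Liouville operator and there is no issue at infinity; the kernel is bounded and continuous. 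The properties \eqref{Equation: Deterministic Symmetry Property}--\eqref{Equation: Deterministic Semigroup Property} again follow from reversibility of the reflected bridge on $(0,b)$ and the Markov/concatenation property of $Y$ together with additivity of the functionals $L$, $\mf L^0$, $\mf L^b$. I expect the main obstacle to be bookkeeping rather than conceptual: namely, pinning down the precise normalization of the boundary local time $\mf L^c_t$ so that the weight $\mr e^{\bar\al\mf L^0_t}$ (resp.\ $\mr e^{\bar\be\mf L^b_t}$) matches the Robin boundary condition $f'(0)+\al f(0)=0$ (resp.\ $-f'(b)+\be f(b)=0$) with the correct sign and constant, and checking that conditioning a reflected bridge on avoiding an endpoint genuinely produces the Dirichlet (killed) bridge with the density $\ms G_t(x-y)-\ms G_t(x+y)$ and its $(0,b)$ analog. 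Once these normalizations are fixed, each assertion reduces to a standard computation with Brownian bridges and additive functionals, which I would carry out in Section~\ref{Section: First Semigroup Technical} without further comment.
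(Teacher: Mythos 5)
Your treatment of Case 1 (classical Feynman--Kac), the symmetry \eqref{Equation: Deterministic Symmetry Property} via time-reversal of the bridge, and the semigroup identity \eqref{Equation: Deterministic Semigroup Property} via midpoint conditioning and additivity of the local-time functionals, all match the paper's route in Appendix~\ref{Appendix: F-K}. Your observation that the Dirichlet cases arise from the reflected bridge by the indicator $\mbf 1_{\{\tau_c>t\}}$ (the $-\infty$ convention of Remark~\ref{Remark: Infinity Times Zero}) is also correct, and the citations you would give for Cases 2-D, 3-D, and 3-R coincide in spirit with the ones the paper uses.

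The gap is in Cases 2-R and 3-M. You assert that the identity $\mr e^{-tH}=K(t)$ for the Robin half-line and for mixed boundary conditions on $(0,b)$ is ``classical'' and can be cited from \cite{FukushimaNakao} or \cite{BloemendalVirag}. Neither reference actually gives this: \cite{FukushimaNakao} treats $V=0$ on a bounded interval with Dirichlet conditions, and \cite{BloemendalVirag} does not prove a Feynman--Kac formula at all. The paper explicitly states that no suitable reference exists for these two cases and proves them in Appendix~\ref{Appendix: F-K}. The missing content is genuinely nontrivial: Case 3-M is obtained by replacing $-\infty\cdot\mf L^b_t(Y)$ with $-n\,\mf L^b_t(Y)$, showing $\|K_n(t)-K(t)\|_{\mr{op}}\to0$, and then separately showing $H_n\to H$ in the sense of eigenvalue and eigenfunction convergence via Sturm--Liouville perturbation theory (continuous dependence of the spectrum on the boundary matrix, \cite[Theorems 3.5.1 and 3.5.2]{Zettl}), after which a spectral matching argument identical to Step~3 of the main proof transfers the identity from $K_n$ to $K$. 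Case 2-R then requires a second, independent limiting argument: truncation to $(0,n)$ with mixed boundary conditions (reducing to Case 3-M just proved), an extension of the truncated kernel to all of $(0,\infty)^2$ via killing at $[n,\infty)$, Hilbert--Schmidt convergence, and a domain-monotonicity argument for the spectra of the truncated forms. Your proposal replaces both of these limiting arguments with a citation that does not exist, so as written it does not prove the theorem in Cases 2-R and 3-M.

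A secondary remark: you flag ``pinning down the precise normalization of $\mf L^c_t$'' as the main obstacle, but that is not the issue; the normalization in Definition~\ref{Definition: Stochastic Processes Etc} is standard. The real work is the operator-theoretic convergence needed to pass from cases with known Feynman--Kac formulas (Dirichlet, Robin on a bounded interval) to the two cases without a ready reference.
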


While Theorem \ref{Theorem: Deterministic Feynman-Kac}
follows from standard functional-analytic methods
(e.g., \cite{ChungZhao}), we were not able to locate an exact statement in the literature
that covers Cases 2-R and 3-M.
We provide a full proof and references in Appendix \ref{Appendix: F-K}.

It is easy to see from \eqref{Equation: Kato Class} that locally integrable functions are in $\mc K_{\mr{loc}}$
so that, by Assumption \ref{Assumption: PG}, $V\in\mc K_{\mr{loc}}$. Therefore, we have the following immediate
consequence of Theorem \ref{Theorem: Deterministic Feynman-Kac}:

\begin{corollary}\label{Corollary: Feynman-Kac with V}
Theorem \ref{Theorem: Deterministic Feynman-Kac}
holds under Assumptions \ref{Assumption: DB} and \ref{Assumption: PG}.
\end{corollary}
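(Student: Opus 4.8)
The plan is to check that $V$ satisfies the single hypothesis of Theorem \ref{Theorem: Deterministic Feynman-Kac}, namely $V \in \mc K_{\mr{loc}}(I)$, and then simply quote that theorem. Everything reduces to the elementary inclusion: any nonnegative function that is locally integrable on $I$'s closure belongs to $\mc K_{\mr{loc}}(I)$.

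To prove that inclusion, fix such an $f$ and a compact set $K \subseteq \bar I$. Let $K^+ := \{ y \in \bar I : \mr{dist}(y, K) \leq 1 \}$, which is again a compact subset of $\bar I$, so $\int_{K^+} f < \infty$ by local integrability. For any $x \in I$,
\[\int_{\{y \in I\,:\,|x - y| \leq 1\}} f(y)\,\mbf 1_K(y)\d y \;\leq\; \int_{K^+ \cap I} f(y)\d y \;\leq\; \int_{K^+} f(y)\d y,\]
and the last quantity is a finite bound independent of $x$. Hence $f\mbf 1_K \in \mc K(I)$ by \eqref{Equation: Kato Class}, and since $K$ was arbitrary, $f \in \mc K_{\mr{loc}}(I)$. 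By Assumption \ref{Assumption: PG}, $V$ is nonnegative and locally integrable on $\bar I$, so $V \in \mc K_{\mr{loc}}(I)$; Theorem \ref{Theorem: Deterministic Feynman-Kac} then yields $\mr e^{-tH} = K(t)$ for all $t > 0$ together with the identities \eqref{Equation: Deterministic Symmetry Property}--\eqref{Equation: Deterministic Semigroup Property}.

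There is no real obstacle here. The only point requiring a little care is the boundary bookkeeping in Cases 2 and 3: the compact set $K$ and its unit enlargement $K^+$ must be taken inside the \emph{closure} $\bar I$, so that possible growth or mild singularities of $V$ near the endpoints $0$ and $b$ are still captured — which is precisely why Assumption \ref{Assumption: PG} asks for integrability "on $I$'s closure." I would note that the growth hypothesis \eqref{Equation: Assumptions} plays no role in this corollary; only the local integrability of $V$ is used.
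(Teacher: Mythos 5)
Your proof is correct and takes the same route as the paper, which simply notes that locally integrable functions lie in $\mc K_{\mr{loc}}$ and then invokes Theorem \ref{Theorem: Deterministic Feynman-Kac}; you merely spell out the one-line verification. (A tiny remark: the unit enlargement $K^+$ is not actually needed, since the indicator $\mbf 1_K$ already confines the integrand to $K$, so $\int_K f < \infty$ suffices as the $x$-independent bound.)
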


\subsection{Reflected Brownian Motion Couplings}\label{Section: Couplings}

The local time process of the Brownian motion $B$ is much more well studied than that of its reflected
versions $X$ or $Y$. Thus, it is convenient to reduce statements regarding the local times of the latter into statements
concerning the local time of $B$. In order to achieve this, we use the following couplings of $B$ with $X$ and $Y$.

\subsubsection{Half-Line}

For any $x>0$, we can couple $B$ and $X$ in such a way that
$X^x(t)=|B^x(t)|$ for every $t\geq0$. In particular, for any functional $F$ of
Brownian paths, one has
\begin{align}\label{Equation: Reflected Motion 0}
\mbf E^x[F(X)]=\mbf E^x[F(|B|)].
\end{align}
Under the same coupling, we observe that for every positive $x$, $y$, and $t$, one has 
\[X_t^{x,y}\deq\big(|B^x|\,\big|\,B^x(t)\in\{-y,y\}\big).\]
Note that
\[\mbf P\big[B^x(t)=y\,\big|\,B^x(t)\in\{-y,y\}\big]
=\frac{\ms G_t(x-y)}{\ms G_t(x-y)+\ms G_t(x+y)}
=\frac{\Pi_B(t;x,y)}{\Pi_X(t;x,y)},\]
and similarly,
\[\mbf P\big[B^x(t)=-y\,\big|\,B^x(t)\in\{-y,y\}\big]
=\frac{\Pi_B(t;x,-y)}{\Pi_X(t;x,y)}.\]
Therefore, for any path functional $F$, it holds that
\begin{multline}\label{Equation: Reflected Motion 1}
\Pi_X(t;x,y)\,\mbf E^{x,y}_t[F(X)]=\Pi_X(t;x,y)\,\mbf E\big[F(|B^x|)|B^x(t)\in\{-y,y\}\big]\\
=\Pi_B(t;x,y)\,\mbf E^{x,y}_t\big[F(|B|)\big]+\Pi_B(t;x,-y)\,\mbf E^{x,-y}_t\big[F(|B|)\big].
\end{multline}

According to the strong Markov property and the symmetry about $0$ of
Brownian motion, we note the equivalence of conditionings
\begin{align}\label{Equation: Equivalence of Conditionings}
\big(|B^x|\,\big|\,B^x(t)=-y\big)
\deq
\big(|B^x|\,\big|\,\tau_0(B^x)<t\text{ and }B^x(t)=y\big),
\end{align}
where we define the hitting time $\tau_0$ as in Remark \ref{Remark: Infinity Times Zero}.
Indeed, we can obtain the left-hand side of \eqref{Equation: Equivalence of Conditionings} from
the right-hand side by reflecting $(B^x|B^x(t)=-y)$ after it first
hits zero and then taking an absolute value (see Figure \ref{Figure: Reflected half-space} below for an illustration).
Since
\[\mbf P[\tau_0(B^x)<t|B^x(t)=y]^{-1}\,\Pi_B(t;x,-y)=\mr e^{2xy/t}\,\Pi_B(t;x,-y)=\Pi_B(t;x,y)\]
(this is easily computed from the joint density of the running maximum and current
value of a Brownian motion \cite[Chapter III, Exercise 3.14]{RevuzYor}),
we see that
\[\Pi_B(t;x,-y)\,\mbf E^{x,-y}_t\big[F(|B|)\big]=\Pi_B(t,x,y)\,\mbf E^{x,y}_t\big[\mbf 1_{\{\tau_0(B)<t\}}F(|B|)\big].\]
Thus \eqref{Equation: Reflected Motion 1} becomes
\begin{align}\label{Equation: Reflected Motion 2}
\Pi_X(t;x,y)\,\mbf E^{x,y}_t[F(X)]
=\Pi_B(t;x,y)\,\mbf E^{x,y}_t\big[(1+\mbf 1_{\{\tau_0(B)<t\}})F(|B|)\big].
\end{align}
Finally, given that $\Pi_B(t;x,y)/\Pi_X(t;x,y)\leq1$,
if $F\geq0$, then
\eqref{Equation: Reflected Motion 2} yields the inequality
\begin{align}\label{Equation: Reflected Motion 3}
\mbf E^{x,y}_t[F(X)]\leq2\mbf E^{x,y}_t[F(|B|)].
\end{align}

\begin{figure}[htbp]
\begin{center}
\input{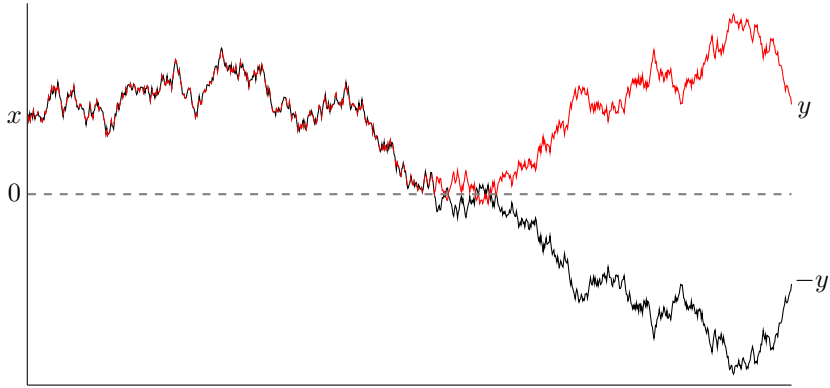}
\caption{Reflection Principle: The path of $B^{x,-y}_t$ (black) and its reflection after the first passage to zero
(red).}
\label{Figure: Reflected half-space}
\end{center}
\end{figure}

\subsubsection{Bounded Interval}

For any $x\in(0,b)$, we can couple $Y^x$ and $B^x$ by reflecting the path of the latter
on the boundary of $(0,b)$, that is,
\begin{align}
\label{Equation: B Y Coupling}
Y^x(t):=\begin{cases}
B^x(t)-2kb&\text{if }B^x(t)\in[2kb,(2k+1)b],\quad k\in\mbb Z,\\
|B^x(t)-2kb|&\text{if }B^x(t)\in[(2k-1)b,2kb],\quad k\in\mbb Z.
\end{cases}
\end{align}
(See Figure \ref{Figure: Reflected Interval} below for an illustration of this coupling.)
Under this coupling, it is clear that for any $z\in(0,b)$, we have
\begin{align}\label{Equation: Interval Local Time}
L_t^z(Y^x)=\sum_{a\in2b\mbb Z\pm z}L^a_t(B^x).
\end{align}

\begin{figure}[htbp]
\begin{center}
\input{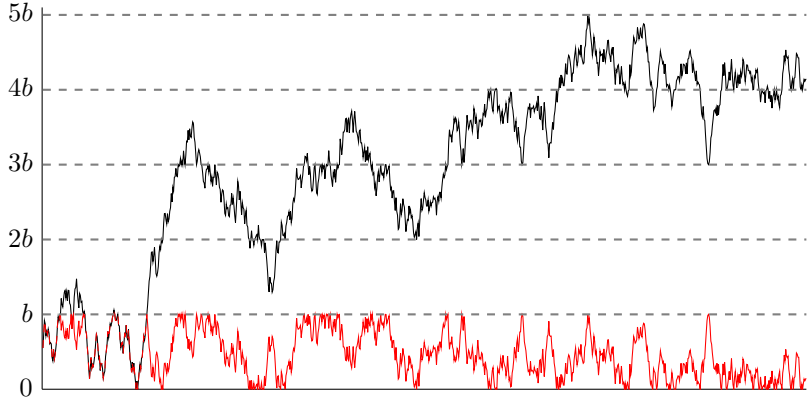}
\caption{Path of $B^x$ (black) and its reflection on the boundary of $(0,b)$ (red).}
\label{Figure: Reflected Interval}
\end{center}
\end{figure}

\subsection{Boundary Local Time}

In this section, we control the exponential moments of the boundary local time of the reflected
paths $X$ and $Y$.

\begin{lemma}\label{Lemma: Regular Local Time}
For every $\theta,t>0$ and $c\in\{0,b\}$, it holds that
\begin{align}\label{Equation: Regular Local Time 1}
\sup_{x\in(0,\infty)}\mbf E^x\big[\mr e^{\theta \mf L^0_t(X)}\big],
\sup_{x\in(0,b)}\mbf E^x\big[\mr e^{\theta \mf L^c_t(Y)}\big]&<\infty.
\end{align}
\end{lemma}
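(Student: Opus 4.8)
The plan is to reduce the boundary local times of the reflected processes to occupation‑time local times of a standard Brownian motion via the couplings of Section~\ref{Section: Couplings}, and then to deduce exponential integrability from a Khasminskii‑type argument. Concretely, using the coupling \eqref{Equation: Reflected Motion 0} (so that $X^x=|B^x|$) together with the definition of $\mf L^0$, one checks that $\mf L^0_t(X^x)=L^0_t(B^x)$, the symmetric occupation‑density local time of $B$ at the origin; likewise, from the coupling \eqref{Equation: B Y Coupling} and \eqref{Equation: Interval Local Time} one obtains $\mf L^0_t(Y^x)=\sum_{k\in\mbb Z}L^{2kb}_t(B^x)$ and $\mf L^b_t(Y^x)=\sum_{k\in\mbb Z}L^{(2k+1)b}_t(B^x)$. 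In each case the quantity under consideration is thus a positive continuous additive functional, which I denote $A$, of the underlying (strong) Markov process, built from a lattice $\La\subset\mbb R$ (respectively $\{0\}$, $2b\mbb Z$, and $b+2b\mbb Z$).

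The crucial input is the short‑time estimate $\sup_x\mbf E^x[A_s]\to0$ as $s\downarrow0$. Since $\mbf E^x\big[L^a_s(B)\big]=\int_0^s\Pi_B(u;x,a)\dd u=\int_0^s\ms G_u(x-a)\dd u$, monotone convergence gives $\mbf E^x[A_s]=\int_0^s\big(\sum_{a\in\La}\ms G_u(x-a)\big)\dd u$, so it suffices to prove the uniform theta‑function bound
\[\sum_{a\in\La}\ms G_u(x-a)\ \le\ \frac{1}{\sqrt{2\pi u}}+C,\qquad u>0,\ x\in\mbb R,\]
where $C$ depends only on $b$ (and $C=0$ when $\La=\{0\}$). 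This I would get by isolating the lattice point nearest to $x$ (whose contribution is at most $(2\pi u)^{-1/2}$) and bounding the remaining terms by $2\int_0^\infty\mr e^{-b^2r^2/2u}\dd r=\sqrt{2\pi u}/b$; it then follows that $\sup_x\mbf E^x[A_s]\le\sqrt{2s/\pi}+Cs\to0$.

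With this in hand, Khasminskii's lemma applies: iterating the Markov property in the standard way yields $\sup_x\mbf E^x[A_s^n]\le n!\,\big(\sup_x\mbf E^x[A_s]\big)^n$, hence $\sup_x\mbf E^x\big[\mr e^{\theta A_s}\big]\le\big(1-\theta\sup_x\mbf E^x[A_s]\big)^{-1}<\infty$ as soon as $\theta\sup_x\mbf E^x[A_s]<1$, which holds for $s$ sufficiently small. To pass from small $s$ to an arbitrary $t>0$, I would choose $m\in\mbb N$ with $t/m$ small and combine additivity of $A$ with the Markov property at the times $jt/m$ ($j=1,\dots,m-1$) to obtain $\sup_x\mbf E^x\big[\mr e^{\theta A_t}\big]\le\big(\sup_x\mbf E^x[\mr e^{\theta A_{t/m}}]\big)^m<\infty$, which is \eqref{Equation: Regular Local Time 1}.

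I expect the main obstacle to be the uniform theta‑function estimate above (keeping the bound integrable near $u=0$ and finite for large $u$, uniformly in $x$), together with the bookkeeping needed to identify $\mf L^c_t(Z)$ as a positive continuous additive functional to which Khasminskii's lemma genuinely applies. I note that the $X$‑case also admits a shorter route: since $\mf L^0_t(X^x)=L^0_t(B^x)$ and, by the strong Markov property at $\tau_0(B^x)$, $L^0_t(B^x)$ is stochastically dominated by $L^0_t(B^0)\deq|B(t)|$, one gets at once $\sup_x\mbf E^x\big[\mr e^{\theta\mf L^0_t(X)}\big]\le\mbf E\big[\mr e^{\theta|B(t)|}\big]<\infty$.
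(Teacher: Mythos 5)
Your proof is correct, but it follows a genuinely different path than the paper's. The paper handles the half-line case by invoking Pitman's explicit joint density for the pair $\big(\mf L^0_1(B^x),B^x(1)\big)$, and handles the bounded-interval case by simply citing the exponential-moment bound in Papanicolaou's thesis. You instead reduce both cases to exponential moments of a single positive continuous additive functional $A_t=\sum_{a\in\La}L^a_t(B)$ of the underlying Brownian motion (with $\La$ a one-point set or a shifted lattice $2b\mbb Z$), prove the short-time estimate $\sup_x\mbf E^x[A_s]\to0$ from the elementary Gaussian-kernel bound $\sup_x\sum_{a\in\La}\ms G_u(x-a)\leq C_1u^{-1/2}+C_2$, and then close via Khasminskii's lemma plus additivity and the Markov property at times $jt/m$. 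This buys you a self-contained, unified treatment of all three boundary local times with no appeal to explicit densities or external sources, at the cost of carrying out the theta-function tail estimate (which, as you anticipate, requires care, and in fact as written the bound $\sqrt{2\pi u}/b$ for the non-nearest lattice terms has dropped the $(2\pi u)^{-1/2}$ prefactor of $\ms G_u$; a correct version gives a tail of order $C_1 u^{-1/2}+C_2$, which is all you need since $\int_0^s(u^{-1/2}+1)\dd u\to0$). Your alternative one-line argument for the $X$ case via the strong Markov property at $\tau_0$ and L\'evy's identity $L^0_t(B^0)\deq|B(t)|$ is a nice shortcut in the same spirit as, but distinct from, the paper's use of Pitman's formula.
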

\begin{proof}
We begin by proving \eqref{Equation: Regular Local Time 1}
in Case 2 (i.e., the process $X$).
By \eqref{Equation: Reflected Motion 0} it suffices to prove that
\[\sup_{x\in(0,\infty)}\mbf E^x\big[\mr e^{\theta \mf L^0_t(B)}\big]<\infty\]
for every $\theta,t>0$,
where
\[\mf L^0_t(B):=\lim_{\eps\to0}\frac1{2\eps}\int_0^t\mbf 1_{\{-\eps<B(s)<\eps\}}\d s.\]
On the one hand,
by Brownian scaling, we have the equality in law
\begin{align}\label{Equation: Local Time Rescaling}
\mf L_t^0(B^x_t)\deq t^{1/2}\mf L_1^0(B^{t^{-1/2}x}_1).
\end{align}
On the other hand, according to \cite[(1)]{Pitman},
for every $x,y\in\mbb R$ and $\ell>0$, one has
\[\mbf P[\mf L_1^0(B^x)\in\dd\ell,B^x(1)\in\dd y]
=\frac{(|x|+|y|+\ell)\mr e^{-(|x|+|y|+\ell)^2/2}}{\sqrt{2\pi}}\d\ell\dd y;\]
integrating out the $y$ variable then yields
\begin{align}\label{Equation: Local Time Density}
\mbf P[\mf L^0_1(B^x)\in\dd\ell]=\frac{2\mr e^{-(|x|+\ell)^2/2}}{\sqrt{2\pi}}.
\end{align}
Thanks to \eqref{Equation: Local Time Rescaling} and 
\eqref{Equation: Local Time Density}, we see that
\[\sup_{x\in(0,\infty)}\mbf E^x_t\big[\mr e^{\theta\mf L^0_t(B)}\big]
\leq\mbf E^0_1\big[\mr e^{\theta t^{1/2}\mf L^0_1(B)}\big]<\infty\]
for every $\theta,t>0$;
hence \eqref{Equation: Regular Local Time 1} holds in Case 2.

The proof of \eqref{Equation: Regular Local Time 1} for Case 3 (i.e., the process $Y$)
follows directly from \cite[(2.18) and (3.11')]{PapanicolaouThesis},
which states that there exists constants $K,K'>0$ (depending
on $\theta$) such that
$\mbf E^x\big[\mr e^{\theta \mf L^c_t(Y)}\big]\leq K'\mr e^{Kt}$ for all $t>0$
and $x\in(0,b)$.
\end{proof}

Next, we aim to extend the result of Lemma \ref{Lemma: Regular Local Time}
to the local time of the bridge processes $Z^{x,x}_t$. Before we can do this,
we need the following estimate on $\Pi_Z$.

\begin{lemma}\label{Lemma: Transition Bounds}
For every $t>0$, it holds that
\begin{align}\label{Equation: Midpoint Transition Bound}
\mf s_t(Z):=\sup_{x,y\in I}\frac{\Pi_Z(t/2;x,y)}{\Pi_Z(t;x,x)}<\infty.
\end{align}
\end{lemma}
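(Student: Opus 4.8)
The plan is to prove the bound separately for each of the three transition kernels, since each requires a slightly different computation, and in each case the strategy is the same: reduce the supremum over $x,y$ to an explicit elementary estimate on Gaussian densities. First I would dispose of $\Pi_B$, which is the easiest: here $\Pi_B(t/2;x,y)=\ms G_{t/2}(x-y)\leq\ms G_{t/2}(0)=1/\sqrt{\pi t}$, while $\Pi_B(t;x,x)=\ms G_t(0)=1/\sqrt{2\pi t}$, so the ratio is bounded by $\sqrt 2$ uniformly in $x,y$. Thus $\mf s_t(B)=\sqrt2<\infty$.

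Next I would treat $\Pi_X$. Here $\Pi_X(t;x,x)=\ms G_t(0)+\ms G_t(2x)\geq\ms G_t(0)=1/\sqrt{2\pi t}$ for every $x\in(0,\infty)$, which gives a uniform lower bound on the denominator. For the numerator, $\Pi_X(t/2;x,y)=\ms G_{t/2}(x-y)+\ms G_{t/2}(x+y)\leq 2\ms G_{t/2}(0)=2/\sqrt{\pi t}$. Hence $\mf s_t(X)\leq 2\sqrt2<\infty$. The key simple observation being exploited is that the diagonal value $\Pi_Z(t;x,x)$ is bounded \emph{below} by the Gaussian kernel at $0$, while the off-diagonal value at time $t/2$ is bounded \emph{above} by a multiple of the same quantity, so the two $t$-dependences match and the ratio is scale-free.

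Finally, for $\Pi_Y$ on the bounded interval I would argue similarly but with a little more care because of the sum over the lattice $2b\mbb Z\pm y$. For the denominator, note that $\Pi_Y(t;x,x)=\sum_{z\in 2b\mbb Z\pm x}\ms G_t(x-z)\geq\ms G_t(0)=1/\sqrt{2\pi t}$, taking just the single term $z=x$ (coming from $+x$ with the lattice shift $0$). For the numerator, $\Pi_Y(t/2;x,y)=\sum_{z\in 2b\mbb Z\pm y}\ms G_{t/2}(x-z)$; each term is at most $\ms G_{t/2}(0)=1/\sqrt{\pi t}$, and this is a sum of the values of a Gaussian density (of variance $t/2$) sampled along two shifted copies of the lattice $2b\mbb Z$, so a standard comparison of a lattice sum of a monotone-tailed function with the corresponding integral shows $\sum_{z\in 2b\mbb Z\pm y}\ms G_{t/2}(x-z)\leq C_t$ for a finite constant $C_t$ depending only on $t$ and $b$ but not on $x,y$ (for instance, $\sum_{k\in\mbb Z}\ms G_{t/2}(a+2bk)\leq\ms G_{t/2}(0)+\tfrac1b\int_\mbb R\ms G_{t/2}(u)\d u=1/\sqrt{\pi t}+1/b$, uniformly in $a\in\mbb R$, and there are two such sums). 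Combining these bounds gives $\mf s_t(Y)\leq 2\sqrt{2\pi t}\,C_t<\infty$.

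The only mildly delicate point — and the one I would expect to be the main obstacle, though it is still routine — is the uniform control of the theta-type lattice sum defining $\Pi_Y(t/2;x,y)$: one must be sure the bound does not degenerate as $x$ or $y$ approaches an endpoint or as the offset $a$ varies, which is exactly what the integral-comparison estimate above guarantees since it is uniform in the offset. Everything else is a one-line Gaussian inequality. Since the statement only asserts finiteness for each fixed $t>0$, no uniformity in $t$ is needed, and the constants produced above suffice.
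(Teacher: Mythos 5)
Your proof is correct and follows essentially the same route as the paper: bound the denominator from below by the leading term $\ms G_t(0)=1/\sqrt{2\pi t}$, then bound the numerator above, with the only nontrivial case being the theta-type lattice sum for $\Pi_Y$, which you control by the same integral-comparison argument the paper uses. The explicit numerical constants you extract in Cases~1 and~2 are a minor refinement, but the underlying argument is identical.
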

\begin{proof}
In all three cases, $\Pi_Z(t;x,x)\geq1/\sqrt{2\pi t}$,
and thus it suffices to prove that
\begin{align}
\label{Equation: Midpoint Transition Bound 2}
\sup_{(x,y)\in I^2}\Pi_Z(t;x,y)<\infty.
\end{align}
In Cases 1 \& 2, this is trivial. In Case 3,
we recall that, by definition,
\[\Pi_Y(t;x,y):=\sum_{z\in2b\mbb Z\pm y}\ms G_t(x-z)=\frac{1}{\sqrt{2\pi t}}\left(\sum_{k\in\mbb Z}\mr e^{-(x+y-2bk)^2/2t}+\mr e^{-(x-y-2bk)^2/2t}\right).\]
According to the integral test for series convergence, we note that
for every $b,t>0$ and $z\in\mbb R$, it holds that
\[\sum_{k=\lceil -z/2b\rceil}^\infty\frac{\mr e^{-(z+2bk)^2/2t}}{\sqrt{2\pi t}}
\leq\frac{\mr e^{-(z+2b\lceil -z/2b\rceil)^2/2t}}{\sqrt{2\pi t}}+\int_{\lceil -z/2b\rceil}^\infty\frac{\mr e^{-(z+2bu)^2/2t}}{\sqrt{2\pi t}}\d u
\leq\frac{1}{\sqrt{2\pi t}}+\frac1b,\]
and similarly for the sum from $k=-\infty$ to $\lfloor-z/2b\rfloor$;
hence \eqref{Equation: Midpoint Transition Bound 2} holds.
\end{proof}

We finish this section with the following.

\begin{lemma}\label{Lemma: Bridge Local Time}
For every $\theta,t>0$ and $c\in\{0,b\}$, it holds that
\begin{align}\label{Equation: Bridge Local Time 1}
\sup_{x\in(0,\infty)}\mbf E^{x,x}_t\big[\mr e^{\theta \mf L^0_t(X)}\big],
\sup_{x\in(0,b)}\mbf E^{x,x}_t\big[\mr e^{\theta \mf L^c_t(Y)}\big]&<\infty.
\end{align}
\end{lemma}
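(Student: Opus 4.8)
The plan is to deduce the bridge estimate \eqref{Equation: Bridge Local Time 1} from the unconditioned estimate \eqref{Equation: Regular Local Time 1} in Lemma \ref{Lemma: Regular Local Time} by disintegrating the bridge $Z^{x,x}_t$ ($Z=X$ or $Y$) at its midpoint. Fix $\theta,t>0$ and $c\in\partial I$.

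First I would use the additivity $\mf L^c_t(Z)=\mf L^c_{[0,t/2]}(Z)+\mf L^c_{[t/2,t]}(Z)$ together with the elementary bound $\mr e^{\theta(a+b)}\leq\tfrac12\mr e^{2\theta a}+\tfrac12\mr e^{2\theta b}$ and the time-reversal symmetry of the bridge to obtain
\[\mbf E^{x,x}_t\big[\mr e^{\theta\mf L^c_t(Z)}\big]\leq\mbf E^{x,x}_t\big[\mr e^{2\theta\mf L^c_{[0,t/2]}(Z)}\big].\]
The reversal symmetry is what makes the two half-interval contributions equal: since the transition kernel $\Pi_Z$ is symmetric and both endpoints of the bridge equal $x$, the reflected path $s\mapsto Z^{x,x}_t(t-s)$ has the same law as $Z^{x,x}_t$ — reflected Brownian motion on $(0,\infty)$ or $(0,b)$ being reversible with respect to Lebesgue measure — and under this map $\mf L^c_{[t/2,t]}(Z)$ becomes $\mf L^c_{[0,t/2]}(Z)$.

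Next I would disintegrate at time $t/2$. Since $\mf L^c_{[0,t/2]}(Z)$ is a functional of $Z|_{[0,t/2]}$, the Markov property at $t/2$ combined with the identity $\int_I\Pi_Z(t/2;x,z)\Pi_Z(t/2;z,x)\d z=\Pi_Z(t;x,x)$ (and Bayes' rule for the conditioning $Z(t)=x$) yields
\[\Pi_Z(t;x,x)\,\mbf E^{x,x}_t\big[\mr e^{2\theta\mf L^c_{[0,t/2]}(Z)}\big]=\mbf E^x\Big[\mr e^{2\theta\mf L^c_{[0,t/2]}(Z)}\,\Pi_Z\big(t/2;Z(t/2),x\big)\Big].\]
Now I would bound the right-hand side crudely using $\Pi_Z(t/2;Z(t/2),x)\leq\sup_{w,w'\in I}\Pi_Z(t/2;w,w')<\infty$ (finiteness established in the proof of Lemma \ref{Lemma: Transition Bounds}) and $\mf L^c_{[0,t/2]}(Z)\leq\mf L^c_t(Z)$; together with the lower bound $\Pi_Z(t;x,x)\geq1/\sqrt{2\pi t}$ (also from that proof) this gives
\[\mbf E^{x,x}_t\big[\mr e^{\theta\mf L^c_t(Z)}\big]\leq\sqrt{2\pi t}\,\Big(\sup_{w,w'\in I}\Pi_Z(t/2;w,w')\Big)\,\sup_{y\in I}\mbf E^y\big[\mr e^{2\theta\mf L^c_t(Z)}\big],\]
whose right-hand side is finite and independent of $x$ by Lemma \ref{Lemma: Regular Local Time} applied with $2\theta$ in place of $\theta$. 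This establishes \eqref{Equation: Bridge Local Time 1} for both $X$ and $Y$ simultaneously.

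The only points demanding care are the two structural facts about the bridge — its reversibility and the midpoint disintegration identity — both of which are classical for the Markov processes $B$, $X$, $Y$ and follow from the symmetry of $\Pi_Z$ and the Markov property; I would state them as such rather than prove them from scratch. I do not expect a genuine obstacle here: once they are in place the lemma reduces to a one-line application of Lemmas \ref{Lemma: Regular Local Time} and \ref{Lemma: Transition Bounds}.
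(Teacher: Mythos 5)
Your proof is correct and follows essentially the same midpoint-disintegration strategy as the paper, combining time reversal, the Doob $h$-transform at time $t/2$, the transition-kernel bound from Lemma \ref{Lemma: Transition Bounds}, and a reduction to Lemma \ref{Lemma: Regular Local Time}. The only cosmetic difference is that the paper conditions on the midpoint first and then invokes conditional independence of the two halves together with Jensen's inequality, whereas you apply the pointwise inequality $\mr e^{\theta(a+b)}\leq\tfrac12\mr e^{2\theta a}+\tfrac12\mr e^{2\theta b}$ and time-reversal symmetry of the full bridge before disintegrating; both routes produce the same factor-two degradation of $\theta$ and the same final bound.
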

\begin{proof}
As it turns out,
\eqref{Equation: Bridge Local Time 1}
follows from Lemma \ref{Lemma: Regular Local Time}.
The trick that we
use to prove this makes several other appearances in this paper:
Since the exponential function is nonnegative, for every $\theta>0$,
an application of the tower property and the Doob $h$-transform yields
\begin{multline}
\label{Equation: Tower+Doob}
\mbf E^{x,x}_{t}\left[\mr e^{\theta \mf L^c_t(Z)}\right]
=\mbf E\left[\mbf E^{x,x}_{t}\Big[\mr e^{\theta \mf L^c_t(Z)}\big|Z^{x,x}_{t}(t/2)\Big]\right]\\
=\int_I\mbf E^{x,x}_{t}\Big[\mr e^{\theta \mf L^c_t(Z)}\big|Z^{x,x}_{t}(t/2)=y\Big]\frac{\Pi_Z(t/2;x,y)\Pi_Z(t/2;y,x)}{\Pi_Z(t;x,x)}\d y.
\end{multline}
If we condition on $Z^{x,x}_{t}(t/2)=y$, then the path segments
\[\big(Z^{x,x}_{t}(s):0\leq s\leq t/2\big)
\qquad\text{and}\qquad
\big(Z^{x,x}_{t}(t/2+s):0\leq s\leq t/2\big)\]
are independent of each other and have respective distributions $Z^{x,y}_{t/2}$
and $Z^{y,x}_{t/2}$. Since $\Pi_Z(t/2;\cdot,\cdot)$ is symmetric for every $t>0$, the time-reversed process
$s\mapsto Z^{y,x}_{t/2}(t/2-s)$ (with $0\leq s\leq t/2$)
is equal in distribution to $Z^{x,y}_{t/2}$.
Thus,
\begin{align}\label{Equation: Midpoint Trick Final}
\nonumber
\mbf E^{x,x}_{t}\Big[\mr e^{\theta \mf L^c_t(Z)}\big|Z^{x,x}_{t}(t/2)=y\Big]
&=\mbf E^{x,x}_{t}\Big[\mr e^{\theta(\mf L^c_{[0,t/2]}(Z)+\mf L^c_{[t/2,t]}(Z))}\big|Z^{x,x}_{t}(t/2)=y\Big]\\
&=\mbf E^{x,y}_{t/2}\Big[\mr e^{\theta \mf L^c_{t/2}(Z)}\Big]^2
\leq\mbf E^{x,y}_{t/2}\Big[\mr e^{2\theta \mf L^c_{t/2}(Z)}\Big],
\end{align}
where
the equality in \eqref{Equation: Midpoint Trick Final} follows from independence and the fact that local time is invariant
with respect to time reversal,
and the last term in \eqref{Equation: Midpoint Trick Final} follows from Jensen's inequality.

Let us define the constant $\mf s_t(Z)<\infty$ as in \eqref{Equation: Midpoint Transition Bound}.
According to \eqref{Equation: Tower+Doob} and \eqref{Equation: Midpoint Trick Final},
we then have that for $t>0$,
\begin{multline}
\label{Equation: Midpoint Trick 3}
\mbf E^{x,x}_{t}\left[\mr e^{\theta \mf L^c_t(Z)}\right]
\leq \mf s_t(Z)\int_I\mbf E^{x,y}_{t/2}\left[\mr e^{2\theta \mf L^c_{t/2}(Z)}\right]\Pi_Z(t/2;x,y)\d y\\
=\mf s_t(Z)\,\mbf E^{x}_{t/2}\left[\mr e^{2\theta \mf L^c_{t/2}(Z)}\right].
\end{multline}
Hence the present result is a direct consequence of Lemma \ref{Lemma: Regular Local Time}.
\end{proof}

\subsection{$L^q$ Norms of Local Time}

In this section, we obtain bounds on the exponential moments of the $L^q$ norms of the local times of $B$, $X$, and $Y$.
Such results for $B^x$ are well known (see, for instance, \cite[Section 4.2]{Chen}).
For $X$ and $Y$ and the bridge processes, we rely on the couplings introduced in Section \ref{Section: Couplings}
and the midpoint sampling trick used in the proof Lemma \ref{Lemma: Bridge Local Time},
respectively. Before we state our result, we need the following.

\begin{lemma}
\label{Equation: Non Bridge Midpoint Trick}
For every $\theta,u,v>0$ and $q\geq1$, it holds that
\[\sup_{x\in I}\mbf E^x\left[\mr e^{\theta\|L_{u+v}(Z)\|_q^2}\right]
\leq\left(\sup_{x\in I}\mbf E^x\left[\mr e^{2\theta\|L_u(Z)\|_q^2}\right]\right)
\left(\sup_{x\in I}\mbf E^x\left[\mr e^{2\theta\|L_v(Z)\|_q^2}\right]\right)\]
\end{lemma}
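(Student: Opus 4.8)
The plan is to use the additivity of local time under splitting the time interval at time $u$, together with the Markov property of $Z$ and the Cauchy--Schwarz inequality. First I would observe that for any $a\in I$ we have the pathwise decomposition
\[
L^a_{u+v}(Z)=L^a_{[0,u]}(Z)+L^a_{[u,u+v]}(Z),
\]
so that by the triangle inequality for the $L^q$ norm on $I$,
\[
\|L_{u+v}(Z)\|_q\le\|L_{[0,u]}(Z)\|_q+\|L_{[u,u+v]}(Z)\|_q.
\]
Squaring and using $(\alpha+\beta)^2\le 2\alpha^2+2\beta^2$ gives
\[
\|L_{u+v}(Z)\|_q^2\le 2\|L_{[0,u]}(Z)\|_q^2+2\|L_{[u,u+v]}(Z)\|_q^2,
\]
hence $\mr e^{\theta\|L_{u+v}(Z)\|_q^2}\le \mr e^{2\theta\|L_{[0,u]}(Z)\|_q^2}\,\mr e^{2\theta\|L_{[u,u+v]}(Z)\|_q^2}$.

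Next I would take $\mbf E^x$ of both sides and apply the Cauchy--Schwarz inequality to split the product of exponentials:
\[
\mbf E^x\!\left[\mr e^{\theta\|L_{u+v}(Z)\|_q^2}\right]
\le \mbf E^x\!\left[\mr e^{4\theta\|L_{[0,u]}(Z)\|_q^2}\right]^{1/2}
\mbf E^x\!\left[\mr e^{4\theta\|L_{[u,u+v]}(Z)\|_q^2}\right]^{1/2}.
\]
Hmm, this produces $4\theta$ rather than $2\theta$; to land exactly on the stated bound I would instead avoid Cauchy--Schwarz and argue via conditioning. Condition on $\mc F_u=\sigma(Z(s):s\le u)$: the first factor $\mr e^{2\theta\|L_{[0,u]}(Z)\|_q^2}$ is $\mc F_u$-measurable, while by the Markov property
\[
\mbf E^x\!\left[\mr e^{2\theta\|L_{[u,u+v]}(Z)\|_q^2}\,\Big|\,\mc F_u\right]
=\mbf E^{Z(u)}\!\left[\mr e^{2\theta\|L_{v}(Z)\|_q^2}\right]
\le\sup_{y\in I}\mbf E^{y}\!\left[\mr e^{2\theta\|L_{v}(Z)\|_q^2}\right],
\]
using stationarity of local time under time shift (so that $L_{[u,u+v]}(Z)$ started from $Z(u)$ has the law of $L_v(Z)$ started from $Z(u)$). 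Here I use that the bound is a deterministic constant independent of the starting point. Pulling this supremum out of the expectation over $\mc F_u$ and then taking $\sup_{x\in I}$ of the remaining factor $\mbf E^x[\mr e^{2\theta\|L_{[0,u]}(Z)\|_q^2}]=\mbf E^x[\mr e^{2\theta\|L_{u}(Z)\|_q^2}]$ yields precisely the claimed inequality.

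The main obstacle I anticipate is the measure-theoretic bookkeeping in the three cases $Z=B,X,Y$: I must make sure the Markov property and the time-shift invariance of local time hold uniformly, and in particular that $L^a_{[u,u+v]}(Z)$ really is, conditionally on $Z(u)$, distributed as the local time of a fresh copy of $Z$ started at $Z(u)$ run for time $v$ (this uses that $X$ and $Y$ are themselves Markov, being reflected Brownian motions). For $Z=X$ or $Y$ one could alternatively pass through the couplings of Section \ref{Section: Couplings} to reduce to $B$, but the direct Markov-property argument above is cleaner and applies verbatim in all three cases. A minor point to check is that $a\mapsto L^a_{[u,u+v]}(Z)$ is indeed an element of $\mr{PC}_c$ so that its $L^q$ norm is well defined and finite almost surely, which follows from the continuous-version convention in Definition \ref{Definition: Stochastic Processes Etc}.
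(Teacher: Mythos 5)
Your proposal is correct and follows essentially the same route as the paper: both decompose the local time at time $u$, apply the triangle inequality in $L^q$ together with $(\alpha+\beta)^2\le 2(\alpha^2+\beta^2)$, and then use the Markov property of $Z$ at time $u$ to factor the expectation and pull out a supremum over the restart point. You phrase the conditioning in terms of $\mathcal F_u$ whereas the paper conditions explicitly on $Z^x(u)=y$ and integrates against the transition kernel $\Pi_Z(u;x,y)$, but these are the same argument in different notation.
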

\begin{proof}
Let $x,y\in I$ be fixed.
Conditional on $Z^x(u)=y$,
the path segments
\[\big(Z^x(s):0\leq s\leq u\big)
\qquad\text{and}\qquad
\big(Z^x(u+t):0\leq t\leq\infty\big)\]
are independent of each other and
have respective distributions $Z^{x,y}_u$ and $Z^y$.
Therefore, by the tower property, we have that
\begin{align*}
\mbf E^x\left[\mr e^{\theta\|L_{u+v}(Z)\|_q^2}\right]
&=\int_I\mbf E^x\left[\mr e^{\theta\|L_{u+v}(Z)\|_q^2}\Big|Z^x(u)=y\right]\Pi_Z(u;x,y)\d y\\
&\leq\int_I\mbf E^x\left[\mr e^{2\theta\|L_u(Z)\|_q^2+2\theta\|L_{[u,u+v]}(Z)\|_q^2}\Big|Z^x(u)=y\right]\Pi_Z(u;x,y)\d y\\
&=\int_I\mbf E^{x,y}_u\left[\mr e^{2\theta\|L_u(Z)\|_q^2}\right]\mbf E^y\left[\mr e^{2\theta\|L_v(Z)\|_q^2}\right]\Pi_Z(u;x,y)\d y\\
&\leq\left(\sup_{y\in I}\mbf E^y\left[\mr e^{2\theta\|L_v(Z)\|_q^2}\right]\right)
\int_I\mbf E^{x,y}_u\left[\mr e^{2\theta\|L_u(Z)\|_q^2}\right]\Pi_Z(u;x,y)\d y\\
&=\left(\sup_{y\in I}\mbf E^y\left[\mr e^{2\theta\|L_v(Z)\|_q^2}\right]\right)
\mbf E^{x}\left[\mr e^{2\theta\|L_u(Z)\|_q^2}\right],
\end{align*}
where the second line
follows from Minkowski's inequality and $(z+\bar z)^2\leq 2(z^2+\bar z^2)$,
and the third line follows from conditional
independence of the path segments.
The result then follows by taking a supremum over $x\in I$.
\end{proof}

\begin{lemma}\label{Lemma: Regular Self-Intersection}
Let $1\leq q\leq 2$.
For every $\theta,t>0$, one has
\begin{align}\label{Equation: Regular Self-Intersection}
\sup_{x\in I}\mbf E^x\left[\mr e^{\theta\|L_t(Z)\|_q^2}\right]<\infty.
\end{align}
\end{lemma}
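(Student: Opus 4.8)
The plan is to reduce \eqref{Equation: Regular Self-Intersection} to an estimate on the maximal local time $\|L_t(Z)\|_\infty:=\sup_{a\in I}L^a_t(Z)$, dispatch the cases $Z=B$ and $Z=X$ from classical bounds on Brownian local time together with the coupling $X^x=|B^x|$ of Section \ref{Section: Couplings}, and handle $Z=Y$ via the folding coupling \eqref{Equation: B Y Coupling} and the self-improving inequality of Lemma \ref{Equation: Non Bridge Midpoint Trick}. For the reduction: for $q\in[1,2]$ and any of the three processes, using $\int_IL^a_t(Z)\d a=t$ one gets
\[\|L_t(Z)\|_q^q=\int_IL^a_t(Z)^q\d a\le\|L_t(Z)\|_\infty^{q-1}\int_IL^a_t(Z)\d a=t\,\|L_t(Z)\|_\infty^{q-1},\]
hence $\|L_t(Z)\|_q^2\le t^{2/q}\|L_t(Z)\|_\infty^{2(q-1)/q}\le t^{2/q}\big(1+\|L_t(Z)\|_\infty\big)$ because $2(q-1)/q\in[0,1]$. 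Thus $\mbf E^x\big[\mr e^{\theta\|L_t(Z)\|_q^2}\big]\le\mr e^{\theta t^{2/q}}\,\mbf E^x\big[\mr e^{\theta t^{2/q}\|L_t(Z)\|_\infty}\big]$, so it suffices to control $\sup_{x\in I}\mbf E^x\big[\mr e^{\la\|L_t(Z)\|_\infty}\big]$ for all $\la,t>0$.

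For $Z=B$ the quantity $\|L_t(B^x)\|_\infty$ does not depend on $x$ by translation invariance, and the classical bound for the maximum of Brownian local time (Barlow--Yor; see, e.g., \cite[Section 4.2]{Chen}) gives $\mbf P\big(\|L_t(B)\|_\infty>u\big)\le C\mr e^{-cu^2/t}$, so $\mbf E^0\big[\mr e^{\la\|L_t(B)\|_\infty}\big]<\infty$ for every $\la,t>0$; in particular $\sup_x\mbf E^x\big[\mr e^{\theta\|L_t(B)\|_q^2}\big]<\infty$ for all $\theta,t,q$. For $Z=X$, the coupling $X^x=|B^x|$ yields $L^a_t(X^x)=L^a_t(B^x)+L^{-a}_t(B^x)$ for $a>0$, so $\|L_t(X^x)\|_\infty\le 2\|L_t(B^x)\|_\infty$ and the same conclusion follows. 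Neither of these two cases needs Lemma \ref{Equation: Non Bridge Midpoint Trick}.

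The case $Z=Y$ is the heart of the matter. Under the folding coupling \eqref{Equation: B Y Coupling}, $L^z_t(Y^x)=\sum_{a\in2b\mbb Z\pm z}L^a_t(B^x)$; since $L_t(B^x)$ is supported in an interval of length $\le2M_t$, where $M_t:=\sup_{s\le t}|B^x(s)-x|$, at most $N_t:=2\lceil M_t/b\rceil+2$ of these summands are nonzero for a fixed $z$, so $\|L_t(Y^x)\|_\infty\le N_t\,\|L_t(B^x)\|_\infty$. Both factors have Gaussian-type tails with variance proxy of order $t$, uniformly in $x$ — namely $\mbf P^x(M_t>u)\le4\mr e^{-u^2/2t}$ and the tail for $\|L_t(B^x)\|_\infty$ recalled above — so by AM--GM ($N_t\|L_t(B^x)\|_\infty\le\tfrac12(N_t^2+\|L_t(B^x)\|_\infty^2)$) and Cauchy--Schwarz there is $c_*>0$ with $\sup_x\mbf E^x\big[\mr e^{\la N_t\|L_t(B^x)\|_\infty}\big]<\infty$ whenever $\la\le c_*/t$. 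Combined with the reduction, this gives $\sup_x\mbf E^x\big[\mr e^{\theta\|L_t(Y)\|_q^2}\big]<\infty$ for $\theta\le c_*\,t^{-1-2/q}$. Iterating Lemma \ref{Equation: Non Bridge Midpoint Trick} with $u=v$ then gives, for every $n$,
\[\sup_x\mbf E^x\big[\mr e^{\theta\|L_t(Y)\|_q^2}\big]\le\Big(\sup_x\mbf E^x\big[\mr e^{2^n\theta\|L_{t/2^n}(Y)\|_q^2}\big]\Big)^{2^n},\]
and since the admissible threshold for the inner expectation, $c_*(t/2^n)^{-1-2/q}=c_*\,t^{-1-2/q}\,2^{n(1+2/q)}$, eventually exceeds $2^n\theta$ (as $2/q>0$), the right-hand side is finite for all $\theta,t>0$, proving \eqref{Equation: Regular Self-Intersection} for $Y$.

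The main obstacle is precisely the $Y$ case: the folding coupling only makes $\|L_t(Y^x)\|_\infty$ a \emph{product} of two sub-Gaussian quantities, which is merely sub-exponential, so the direct estimate degenerates as $t\downarrow0$ and holds only for $\theta$ below a threshold of order $t^{-1-2/q}$. What saves the argument is that this threshold grows fast enough under dyadic time-splitting for the self-improving bound of Lemma \ref{Equation: Non Bridge Midpoint Trick} to upgrade it to all $\theta,t$; correspondingly, the only other delicate point is quoting the $x$-uniform Gaussian tail of the maximal Brownian local time with the right dependence on $t$.
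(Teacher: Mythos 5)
Your argument is correct, and it takes a genuinely different route from the paper's, at least in Cases 1 and 2. The paper handles $Z=B$ by Brownian scaling ($\|L_t(B^0)\|_q^2\deq t^{1+1/q}\|L_1(B^0)\|_q^2$) followed by Chen's large-deviation estimate for $\|L_1(B^0)\|_q^2$, then pulls $Z=X$ back to $Z=B$, and only for $Z=Y$ does it pass through a H\"older splitting that produces a product of $\sup_a L^a_t(B)$ with the range of $B$. You instead interpolate once and for all: $\|L_t(Z)\|_q^q\le\|L_t(Z)\|_\infty^{q-1}\|L_t(Z)\|_1=t\,\|L_t(Z)\|_\infty^{q-1}$, so $\|L_t(Z)\|_q^2\le t^{2/q}(1+\|L_t(Z)\|_\infty)$ (using $x^\alpha\le1+x$ for $\alpha\in[0,1]$, which is exactly where $q\le2$ is used). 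For $B$ and $X$ this finishes the proof outright, since $\|L_t(B)\|_\infty$ is sub-Gaussian and hence has finite exponential moments of every order; no self-improvement and no $L^q$ LDP are needed. For $Y$, your bound $\|L_t(Y^x)\|_\infty\le N_t\|L_t(B^x)\|_\infty$, with AM--GM and Cauchy--Schwarz, gives finiteness only below a threshold $\theta\lesssim t^{-1-2/q}$, but iterating Lemma \ref{Equation: Non Bridge Midpoint Trick} with $u=v=t/2$ works precisely because the threshold scales as $t^{-(1+2/q)}$ with $1+2/q>1$, so $2^{n(1+2/q)}$ outruns $2^n$; this is the same self-improvement mechanism the paper abstracts into conditions \eqref{Equation: Lp Local Time Scaling}--\eqref{Equation: Lp Local Time Scaling 2} with $\kappa>1$. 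Net comparison: your approach is more unified (a single $L^\infty$ reduction applied to all three processes) and more elementary for $B$ and $X$ (only the sub-Gaussian tail of maximal local time is needed, a fact the paper also invokes in its Case 3); the paper's route for $B$ and $X$ is shorter to state but relies on the heavier $L^q$ large-deviation input. The $Y$ case ends up structurally very similar in both treatments. One small quantitative remark: your count $N_t\le2\lceil M_t/b\rceil+2$ is off by at most an absolute constant (the number of lattice points of $2b\mbb Z\pm z$ in an interval of length $2M_t$ is bounded by $2M_t/b+2$, not exactly your expression), but this does not affect the argument since only the order of growth in $M_t$ matters.
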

\begin{proof}
We begin by noting that $\|L_t(Z)\|_1=t$
by \eqref{Equation: Interior Local Time},
and thus the result is trivial if $q=1$.
To prove the result for $1<q\leq 2$, we claim that it suffices to show that there exists nonnegative random variables
$R_1,R_2\geq0$ with finite exponential moments in some neighbourhood of zero,
as well as constants $\ka_1,\ka_2>1$ such that
\begin{align}
\label{Equation: Lp Local Time Scaling}
\sup_{x\in I}\mbf E^x\left[\mr e^{\theta\|L_t(Z)\|_q^2}\right]\leq\mbf E\left[\mr e^{\theta t^{\ka_1}R_1}\right]
\end{align}
or
\begin{align}
\label{Equation: Lp Local Time Scaling 2}
\sup_{x\in I}\mbf E^x\left[\mr e^{\theta\|L_t(Z)\|_q^2}\right]\leq\mbf E\left[\mr e^{\theta t^{\ka_1}R_1}\right]^{1/2}
\mbf E\left[\mr e^{\theta t^{\ka_2}R_2}\right]^{1/2}
\end{align}
for all $t>0$.
To see this, suppose \eqref{Equation: Lp Local Time Scaling} holds,
and let $\theta_0>0$ be such that $\mbf E[\mr e^{\theta R_1}]<\infty$
for all $\theta<\theta_0$.
Then, for any fixed $\theta>0$,
\[\sup_{x\in I}\mbf E^x\left[\mr e^{\theta\|L_t(Z)\|_q^2}\right]
\leq\mbf E\left[\mr e^{\theta t^{\ka_1} R_1}\right]<\infty\]
for every $t<(\theta_0/\theta)^{1/\ka_1}$.
In particular, if $u,v\leq(\theta_0/2\theta)^{1/\ka_1}$,
we get from Lemma \ref{Equation: Non Bridge Midpoint Trick} that
\[\sup_{x\in I}\mbf E^x\left[\mr e^{\theta\|L_{u+v}(Z)\|_q^2}\right]
\leq\left(\sup_{x\in I}\mbf E^x\left[\mr e^{2\theta\|L_u(Z)\|_q^2}\right]\right)
\left(\sup_{x\in I}\mbf E^x\left[\mr e^{2\theta\|L_v(Z)\|_q^2}\right]\right)<\infty.\]
Thus, \eqref{Equation: Regular Self-Intersection} now holds
for $t<2(\theta_0/2\theta)^{1/\ka_1}=2^{1-1/\ka_1}(\theta_0/\theta)^{1/\ka_1}$.
Since $\ka_1>1$, $2^{1-1/\ka_1}>1$, and thus
by repeating this procedure infinitely often, we obtain by induction that 
\eqref{Equation: Regular Self-Intersection} holds for all $t>0$,
as desired. Essentially the same argument gives the result
if we instead have \eqref{Equation: Lp Local Time Scaling 2}.

We then prove \eqref{Equation: Lp Local Time Scaling}/\eqref{Equation: Lp Local Time Scaling 2}. We argue
on a case-by-case basis.
Let us begin with {Case 1}.
If we couple $B^x=x+B^0$ for all $x$, then
changes of variables with a Brownian scaling imply that
\[\|L_t(B^x)\|_q^2=\|L_t(B^0)\|_q^2
\deq t\left(\int_{\mbb R} L^{t^{-1/2}a}_1(B^0)^q\d a\right)^{2/q}
=t^{1+1/q}\|L_1(B^0)\|_q^2\]
for every $q>1$.
Thanks to the large deviation result \cite[Theorem 4.2.1]{Chen},
we know that for every $q>1$, there exists some $c_q>0$ such that
\[\mbf P\big[\|L_1(B^0)\|_q^2>u\big]=\mr e^{-c_qu^{q/(q-1)}(1+o(1))},\qquad u\to\infty.\]
Thus, in {Case 1} \eqref{Equation: Lp Local Time Scaling} holds with $R_1=\|L_1(B^0)\|_q^2$
and $\ka_1=1+1/q$.

Consider now {Case 2}. By coupling $X^x(t)=|B^x(t)|$ for all $t>0$, we note that
for every $a>0$, one has $L_t^a(X^x)=L_t^a(|B^x|)=L_t^a(B^x)+L_t^{-a}(B^x).$
Therefore,
\begin{multline*}
\|L_t(X^x)\|_q^2
=\left(\int_0^\infty L^a_t(X^x)^q\d a\right)^{2/q}\\
\leq 2^{2(q-1)/q}\left(\int_0^\infty L^a_t(B^x)^q+L^{-a}_t(B^x)^q\d a\right)^{2/q}
=2^{2(q-1)/2}\|L_t(B^x)\|_q^2.
\end{multline*}
Thus, the proof in {Case 2} follows from {Case 1}.

Finally, consider {Case 3}.
Recall the coupling of $Y^x$ and $B^x$ in \eqref{Equation: B Y Coupling},
which yields the local time identity \eqref{Equation: Interval Local Time}.
The argument that follows is inspired from the proof of \cite[Lemma 2.1]{ChenLi}:
Under the coupling \eqref{Equation: Interval Local Time},
\begin{multline*}
\left(\int_0^b L_t^z(Y^x)^q\d z\right)^{1/q}
=\left(\int_0^b \Big(\sum_{k\in2b\mbb Z} L_t^{k+z}(B^x)+L_t^{k-z}(B^x)\Big)^q\d z\right)^{1/q}\\
\leq2^{(q-1)/q}\sum_{k\in2b\mbb Z}\left(\int_{-b}^bL^{k+z}_t(B^x)^q\d z\right)^{1/q}.
\end{multline*}
Let us denote the maximum and minimum of $B^x$ as
\[M^x(t):=\sup_{s\in[0,t]}B^x(s)
\qquad\text{and}\qquad
m^x(t):=\inf_{s\in[0,t]}B^x(s).\]
In order for the integral $\int_{-b}^bL^{k+z}_t(B^x)^2\d z$ to be different from zero,
it is necessary that
$M^x(t)\geq k-b$ and $m^x(t)\leq k+b$, that is,
$M^x(t)+b\geq k \geq m^x(t)-b$. Consequently,
for every $q>1$, one has
\begin{align*}
&\sum_{k\in2b\mbb Z}\Big(\int_{-b}^bL^{k+z}_t(B^x)^q\d z\Big)^{1/q}\\
&=\sum_{k\in2b\mbb Z}\Big(\int_{-b}^bL^{k+z}_t(B^x)^q\d z\Big)^{1/q}\mbf 1_{\{M^x(t)+b\geq k \geq m^x(t)-b\}}\\
&\leq\Big(\sum_{k\in2b\mbb Z}\int_{-b}^bL^{k+z}_t(B^x)^q\d z\Big)^{1/q}\Big(\sum_{k\in2b\mbb Z}\mbf 1_{\{M^x(t)+b\geq k \geq m^x(t)-b\}}\Big)^{\frac{q-1}{q}}\\
&=\Big(\int_\mbb RL^a_t(B^x)^q\d a\Big)^{1/q}\Big(\sum_{k\in2b\mbb Z}\mbf 1_{\{M^x(t)+b\geq k \geq m^x(t)-b\}}\Big)^{\frac{q-1}{q}}\\
&\leq c_1t^{1/q}\Big(\sup_{a\in\mbb R}L^a_t(B^x)\Big)^{\frac{q-1}{q}}\big(M^x(t)-m^x(t)+c_2\big)^{\frac{q-1}{q}}\\
&\leq c_1t^{1/q}\Bigg(c_2^{\frac{q-1}{q}}\big(\sup_{a\in\mbb R}L^a_t(B^x)\big)^{\frac{q-1}{q}}+\Big(\sup_{a\in\mbb R}L^a_t(B^x)\cdot\big(M^x(t)-m^x(t)\big)\Big)^{\frac{q-1}{q}}\Bigg)
\end{align*}
where $c_1,c_2>0$ only depend on $b$ and $q$:
The inequality on the third line follows from H\"older's inequality;
the equality on the fourth line follows from the fact that $\sum_{k \in 2b\mathbb{Z}}\int^{b}_{-b} L^{a}_t(B^x)^q \d a$ is equal to $\int_\mbb RL^a_t(B^x)^q\d a$;
the inequality on the fifth line follows from the fact that $\int_\mbb RL^a_t(B^x)^q\d a$ is bounded by $(\sup_{a\in\mbb R}L^a_t(B^x))^{q-1} \|L_t(B^x)\|_1$, where $\|L_t(B^x)\|_1=t$;
and the inequality on the last line follows from the fact that
\[\big(M^x(t)-m^x(t)+c_2\big)^{\frac{q-1}{q}}\leq\big(M^x(t)-m^x(t)\big)^{\frac{q-1}{q}}+c^{\frac{q-1}{q}}_2.\]   

By Brownian scaling and translation invariance, we have that
\[t^{1/q}\big(\sup_{a\in\mbb R}L^a_t(B^x)\big)^{\frac{q-1}{q}}\deq t^{1/2+1/2q}\big(\sup_{a\in\mbb R}L^a_1(B^0)\big)^{\frac{q-1}{q}}\]
and
\begin{align}
t^{1/q}\bigg(\sup_{a\in\mbb R}L^a_t(B^x)\cdot\big(M^x(t)-m^x(t)\big)\bigg)^{\frac{q-1}{q}}
\deq t\,\bigg(\sup_{a\in\mbb R}L^a_1(B^0)\cdot\big(M^0(1)-m^0(1)\big)\bigg)^{\frac{q-1}{q}}.\nonumber
\end{align}
Given that $4(\frac{q-1}{q})\leq 2$ for all $q\in(1,2]$
and that there exists $\theta_0>0$ small enough so that
\[\mbf E\left[\exp\left(\theta_0 \sup_{a\in\mbb R}L^a_1(B^0)^2\right)\right],\mbf E\left[\mr e^{\theta_0(M^0(1)-m^0(1))^2}\right]<\infty,\]
(e.g., the proof of \cite[Lemma 2.1]{ChenLi} and references therein)
we finally conclude by H\"older's inequality that \eqref{Equation: Lp Local Time Scaling 2} holds
in Case 3 with
\[R_1=4c_1^2\big(c_2\sup_{a\in\mbb R}L^a_1(B^0)\big)^{2\frac{q-1}{q}},
\quad
R_2=4\bigg(\sup_{a\in\mbb R}L^a_1(B^0)\cdot\big(M^0(1)-m^0(1)\big)\bigg)^{2\frac{q-1}{q}},\]
and $\ka_1=1+1/q$ and $\ka_2=2$.
\end{proof}

\begin{lemma}\label{Lemma: Bridge Self-Intersection}
Let $1\leq q\leq 2$.
For every $\theta,t>0$, one has
\[\sup_{x\in I}\mbf E^{x,x}_t\left[\mr e^{\theta\|L_t(Z)\|_q^2}\right]<\infty.\]
\end{lemma}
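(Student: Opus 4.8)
The plan is to reduce Lemma \ref{Lemma: Bridge Self-Intersection} (the bridge version) to Lemma \ref{Lemma: Regular Self-Intersection} (the non-bridge version) using exactly the midpoint-sampling trick that appeared in the proof of Lemma \ref{Lemma: Bridge Local Time}. First I would fix $x\in I$, $t>0$, and $\theta>0$, and apply the tower property and the Doob $h$-transform to condition on the midpoint value $Z^{x,x}_t(t/2)=y$, writing
\[
\mbf E^{x,x}_t\Big[\mr e^{\theta\|L_t(Z)\|_q^2}\Big]
=\int_I\mbf E^{x,x}_t\Big[\mr e^{\theta\|L_t(Z)\|_q^2}\,\big|\,Z^{x,x}_t(t/2)=y\Big]\,\frac{\Pi_Z(t/2;x,y)\Pi_Z(t/2;y,x)}{\Pi_Z(t;x,x)}\d y.
\]
Conditioned on the midpoint, the two half-bridges $\big(Z^{x,x}_t(s):0\le s\le t/2\big)$ and $\big(Z^{x,x}_t(s):t/2\le s\le t\big)$ are independent with laws $Z^{x,y}_{t/2}$ and $Z^{y,x}_{t/2}$ respectively; time-reversal symmetry (valid because $\Pi_Z(t/2;\cdot,\cdot)$ is symmetric) identifies the law of the second half with $Z^{x,y}_{t/2}$ as well, and local time is invariant under time reversal.

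The next step is the key inequality splitting the local time over the two halves: since $L_t(Z)=L_{[0,t/2]}(Z)+L_{[t/2,t]}(Z)$ pointwise, the triangle inequality in $L^q$ together with $(z+\bar z)^2\le 2(z^2+\bar z^2)$ gives $\|L_t(Z)\|_q^2\le 2\|L_{[0,t/2]}(Z)\|_q^2+2\|L_{[t/2,t]}(Z)\|_q^2$. Combined with conditional independence of the two halves and the time-reversal identification, this yields
\[
\mbf E^{x,x}_t\Big[\mr e^{\theta\|L_t(Z)\|_q^2}\,\big|\,Z^{x,x}_t(t/2)=y\Big]
\le\mbf E^{x,y}_{t/2}\Big[\mr e^{2\theta\|L_{t/2}(Z)\|_q^2}\Big]\,\mbf E^{y,x}_{t/2}\Big[\mr e^{2\theta\|L_{t/2}(Z)\|_q^2}\Big]
=\mbf E^{x,y}_{t/2}\Big[\mr e^{2\theta\|L_{t/2}(Z)\|_q^2}\Big]^2.
\]
By Jensen's inequality the right-hand side is bounded by $\mbf E^{x,y}_{t/2}\big[\mr e^{4\theta\|L_{t/2}(Z)\|_q^2}\big]$. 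Then, bounding one of the factors $\Pi_Z(t/2;y,x)/\Pi_Z(t;x,x)$ by the finite constant $\mf s_t(Z)$ from Lemma \ref{Lemma: Transition Bounds} and recognizing $\int_I(\cdot)\,\Pi_Z(t/2;x,y)\d y$ as an unconditioned expectation under the Doob transform, I obtain
\[
\mbf E^{x,x}_t\Big[\mr e^{\theta\|L_t(Z)\|_q^2}\Big]
\le\mf s_t(Z)\int_I\mbf E^{x,y}_{t/2}\Big[\mr e^{4\theta\|L_{t/2}(Z)\|_q^2}\Big]\Pi_Z(t/2;x,y)\d y
=\mf s_t(Z)\,\mbf E^x_{t/2}\Big[\mr e^{4\theta\|L_{t/2}(Z)\|_q^2}\Big].
\]
The supremum over $x\in I$ of the right-hand side is finite by Lemma \ref{Lemma: Regular Self-Intersection} applied with parameter $4\theta$ and time $t/2$, which completes the proof.

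This argument is essentially bookkeeping once the midpoint trick is in place, so I do not expect a serious obstacle; the only point requiring a little care is ensuring the $L^q$-norm splitting inequality is applied correctly when $q$ lies in $(1,2]$, and that the time-reversal symmetry of $L_{[t/2,t]}$ genuinely matches it in law with $L_{t/2}$ of a forward $Z^{x,y}_{t/2}$ bridge — both of which are already used in the proof of Lemma \ref{Lemma: Bridge Local Time} and carry over verbatim here. The case $q=1$ is trivial since $\|L_t(Z)\|_1=t$ deterministically.
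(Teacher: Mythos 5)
Your proposal is correct and follows essentially the same approach as the paper's proof: the midpoint Doob $h$-transform conditioning, the $L^q$ triangle inequality with $(z+\bar z)^2\le 2(z^2+\bar z^2)$, time-reversal identification, Jensen's inequality to combine the two factors, and the bound by $\mf s_t(Z)$ reducing to Lemma \ref{Lemma: Regular Self-Intersection}. The argument matches the paper's step by step.
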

\begin{proof}
Once again, the present result follows from
Lemma \ref{Lemma: Regular Self-Intersection}.
To see this, we use the same trick employed in the proof
of Lemma \ref{Lemma: Bridge Local Time}:
For every $\theta>0$,
the tower property and the Doob $h$-transform yields
\[\mbf E^{x,x}_{t}\left[\mr e^{\theta\|L_{t}(Z)\|_q^2}\right]
=\int_I\mbf E^{x,x}_{t}\Big[\mr e^{\theta\|L_{t}(Z)\|_q^2}\big|Z^{x,x}_{t}(t/2)=y\Big]\frac{\Pi_Z(t/2;x,y)\Pi_Z(t/2;y,x)}{\Pi_Z(t;x,x)}\d y.\]
Arguing as in the passage following \eqref{Equation: Tower+Doob},
\begin{align*}
\mbf E^{x,x}_{t}\Big[\mr e^{\theta\|L_{t}(Z)\|_q^2}\big|Z^{x,x}_{t}(t/2)=y\Big]
&=\mbf E^{x,x}_{t}\Big[\mr e^{\theta\|L_{t/2}(Z)+L_{[t/2,t]}(Z)\|_q^2}\big|Z^{x,x}_{t}(t/2)=y\Big]\\
&\leq\mbf E^{x,x}_{t}\Big[\mr e^{2\theta(\|L_{t/2}(Z)\|_q^2+\|L_{[t/2,t]}(Z)\|_q^2)}\big|Z^{x,x}_{t}(t/2)=y\Big]\\
&=\mbf E^{x,y}_{t/2}\Big[\mr e^{2\theta\|L_{t/2}(Z)\|_q^2}\Big]^2\leq\mbf E^{x,y}_{t/2}\Big[\mr e^{4\theta\|L_{t/2}(Z)\|_q^2}\Big],
\end{align*}
where the inequality on the second line follows from a combination the triangle inequality
and $(z+\bar z)^2\leq2(z^2+\bar z^2)$,
the equality on the third line follows from independence and invariance of local time
under time reversal,
and the inequality on the third line follows from Jensen's inequality.

With $\mf s_t(Z)$ as in \eqref{Equation: Midpoint Transition Bound},
similarly to \eqref{Equation: Midpoint Trick 3} we then have the upper bound
\[\mbf E^{x,x}_{t}\left[\mr e^{\theta\|L_{t}(Z)\|_q^2}\right]\leq\mf s_t(Z)\,\mbf E^x\big[\mr e^{4\theta\|L_{t/2}(Z)\|_q^2}\big]\]
for every $t>0$;
whence the present result readily follows from
Lemma \ref{Lemma: Regular Self-Intersection}.
\end{proof}

\subsection{Compactness Properties of Deterministic Kernels}\label{Section: Last Semigroup Technical}

We now conclude the proofs of our technical results
with some estimates regarding the integrability/compactness
of the deterministic kernels \eqref{Definition: Deterministic Kernel}.
In this section and several others, to alleviate notation,
we introduce the following shorthand.

\begin{notation}
For every $t>0$, we define the path functional
\begin{align}
\label{Equation: Mathfrak A,B Shortcut}
\mf A_t(Z)&:=
\begin{cases}
-\langle L_t(B),V\rangle&\text{(Case 1)}\\
-\langle L_t(X),V\rangle+\bar\al\mf L^{0}_t(X)&\text{(Case 2)}\\
-\langle L_t(Y),V\rangle+\bar\al\mf L^{0}_t(Y)+\bar\be\mf L^{b}_t(Y)&\text{(Case 3)}
\end{cases}
\end{align}
\end{notation}

\begin{lemma}\label{Lemma: Deterministic Compactness}
For every $p\geq1$ and $t>0$,
\[\int_I \Pi_Z(t;x,x)\,\mbf E^{x,x}_t\left[\mr e^{p\mf A_t(Z)}\right]^{1/p}\d x<\infty.\]
\end{lemma}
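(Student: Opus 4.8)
The plan is to bound the $L^1(I)$ norm of the diagonal of the deterministic kernel $\Pi_Z(t;x,x)\,\mbf E^{x,x}_t[\mr e^{p\mf A_t(Z)}]^{1/p}$ by splitting $\mf A_t(Z)$ into its potential part $-\langle L_t(Z),V\rangle$ and its boundary-local-time part, and then combining three ingredients already available in the excerpt: (i) the exponential-moment bounds on boundary local times of bridge processes from Lemma \ref{Lemma: Bridge Local Time}; (ii) the growth condition \eqref{Equation: Assumptions} on $V$, which forces the Gaussian factor $\mbf E^{x,x}_t[\mr e^{-\langle L_t(Z),V\rangle}]$ to decay faster than any power of $|x|$; and (iii) the explicit form of $\Pi_Z(t;x,x)$, which is uniformly bounded above (see Lemma \ref{Lemma: Transition Bounds}) and hence contributes nothing to integrability — the decay must come entirely from the potential term.

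First I would apply H\"older's inequality inside the expectation to separate the potential and boundary contributions: writing $p' := 2p$, one has
\[
\mbf E^{x,x}_t\big[\mr e^{p\mf A_t(Z)}\big]^{1/p}
\leq
\mbf E^{x,x}_t\big[\mr e^{-2p\langle L_t(Z),V\rangle}\big]^{1/2p}\,
\mbf E^{x,x}_t\big[\mr e^{2p(\bar\al\mf L^0_t(Z)+\bar\be\mf L^b_t(Z))}\big]^{1/2p},
\]
where in Cases 2--3 we treat $-\infty\cdot\mf L^c_t$ via the convention of Remark \ref{Remark: Infinity Times Zero} (in which case the corresponding factor is simply $\leq 1$), and in the Robin cases the second factor is bounded, uniformly in $x$, by a finite constant thanks to Lemma \ref{Lemma: Bridge Local Time}. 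In Case 1 there is no boundary term and the second factor is absent. This reduces the problem to showing
\[
\int_I \Pi_Z(t;x,x)\,\mbf E^{x,x}_t\big[\mr e^{-2p\langle L_t(Z),V\rangle}\big]^{1/2p}\d x<\infty.
\]

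Next I would bound the integrand pointwise. Since $\mf L^c_t\geq 0$ and $V\geq 0$, we have $\mbf E^{x,x}_t[\mr e^{-2p\langle L_t(Z),V\rangle}]\leq 1$, so $\Pi_Z(t;x,x)\,\mbf E^{x,x}_t[\cdots]^{1/2p}$ is bounded; this already settles Case 3, where $I=(0,b)$ is bounded. For the unbounded cases $I=\mbb R$ or $(0,\infty)$, I would extract decay from $V$: since $\mbf E^{x,x}_t[\mr e^{-2p\langle L_t(Z),V\rangle}]\leq K(2pt;x,x)/\Pi_Z(2pt;x,x)\cdot(\text{normalization})$ — or, more directly, by Jensen's inequality applied to the bridge measure together with the explicit Gaussian weight of $\Pi_Z$ — one obtains that for $|x|$ large the integrand is dominated by $C\exp\big(-c\inf_{|y-x|\leq \de}V(y)\big)$ for suitable $\de,c>0$ (the event that the bridge stays within distance $\de$ of its endpoint $x$ for a fixed positive fraction of $[0,t]$ has probability bounded below uniformly in $x$). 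Since $\liminf_{|x|\to\infty}V(x)/\log|x|=\infty$, this dominating function is eventually $\leq C|x|^{-2}$, which is integrable at infinity; combined with the uniform boundedness of the integrand on compacts, the claim follows.

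The main obstacle I anticipate is making the pointwise lower bound on the "bridge stays near its endpoint" event genuinely uniform in $x$ and rigorous across all three $Z = B, X, Y$ — the reflected processes $X$ and $Y$ behave differently near the boundary, so for small $x$ (close to $0$) the bridge geometry is distorted, and one must either use the couplings of Section \ref{Section: Couplings} (bounding $L_t(X)$ and $L_t(Y)$ from below by local times of an ordinary Brownian bridge, for which the Gaussian estimate is classical) or invoke the semigroup/Chapman--Kolmogorov identity \eqref{Equation: Deterministic Semigroup Property} to reduce the diagonal bound to a statement about $K(t;x,y)$ that is easier to control. Passing from the local-time functional $\langle L_t(Z),V\rangle$ to a pointwise $\inf V$ bound also requires care with the ``$-\infty\cdot 0$'' convention when $\bar\al$ or $\bar\be$ is $-\infty$, but this only helps (it restricts the path away from the boundary) rather than hurts.
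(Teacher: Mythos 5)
Your decomposition (H\"older to separate the potential term from the boundary local-time term), your Case 3 argument (use $\mr e^{-p\langle L_t(Y),V\rangle}\leq 1$ and Lemmas \ref{Lemma: Transition Bounds} and \ref{Lemma: Bridge Local Time} on the bounded domain), and your Case 2 reduction to Case 1 via the reflected coupling $X=|B|$ all agree with the paper. However, the core of the lemma is Case 1, and there your argument has a genuine gap.

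You propose to bound $\mbf E^{x,x}_t\big[\mr e^{-2p\langle L_t(Z),V\rangle}\big]$ from above by $C\exp\big(-c\inf_{|y-x|\leq\de}V(y)\big)$, citing that the event that the bridge spends a positive fraction of $[0,t]$ within $\de$ of $x$ has probability bounded \emph{below} uniformly in $x$. This probability estimate points in the wrong direction for an upper bound on the expectation. Write $q>0$ for the probability of this ``good'' event; by translation invariance of the bridge, $q$ is a constant independent of $x$. Then the best one gets from your argument is
\begin{equation*}
\mbf E^{x,x}_t\big[\mr e^{-2p\langle L_t(Z),V\rangle}\big]
\leq q\,\mr e^{-c\inf_{|y-x|\leq\de}V(y)}+(1-q)\cdot 1,
\end{equation*}
where the second term uses only $V\geq 0$, since you have no control of $\langle L_t(Z),V\rangle$ on the bad event (the bridge can wander toward the origin where $V$ may vanish). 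As $|x|\to\infty$ the right side tends to the constant $1-q$, so $\int_I\Pi_Z(t;x,x)\,\mbf E^{x,x}_t[\mr e^{-2p\langle L_t(Z),V\rangle}]^{1/2p}\,\d x$ diverges when $I$ is unbounded. The alternative routes you sketch also fail: Jensen's inequality yields $\mbf E^{x,x}_t[\mr e^{-2p\langle L_t(Z),V\rangle}]\geq \mr e^{-2p\mbf E^{x,x}_t[\langle L_t(Z),V\rangle]}$ (the opposite direction), and the Chapman--Kolmogorov identity relates diagonals at different time scales without by itself exposing decay in $x$.

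The missing ingredient is a \emph{pathwise} lower bound on $\langle L_t(B),V\rangle$ valid for every bridge path, not only those that stay near $x$. The paper achieves this using the growth condition twice: first, Assumption \ref{Assumption: PG} gives, for any $c_1>0$, a $c_2>0$ with $V(y)\geq c_1\log(1+|y|)-c_2$ for all $y$; second, the elementary inequality $\log(1+|x+z|)\geq\log(1+|x|)-|z|$ applied along the path yields, after centering the bridge,
\begin{equation*}
\int_0^t\log\big(1+|x+B^{0,0}_t(s)|\big)\d s
\geq t\log(1+|x|)-\int_0^t|B^{0,0}_t(s)|\d s
\end{equation*}
for \emph{every} realization. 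Substituting produces the bound
$\Pi_B(t;x,x)\,\mbf E^{x,x}_t[\mr e^{p\mf A_t(B)}]^{1/p}
\leq C\,(1+|x|)^{-c_1 t}\,\mbf E^{0,0}_t\big[\mr e^{pc_1\int_0^t|B(s)|\d s}\big]^{1/p}$,
and the last expectation — the exponential moment of a one-dimensional Bessel bridge area — is finite by the tail estimate of Gruet--Shi. Choosing $c_1>1/t$ then makes $(1+|x|)^{-c_1 t}$ integrable. It is this pathwise dominance, not a restriction to a high-probability event, that closes the argument.
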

\begin{proof}
Let us begin with Case 1. By Assumption \ref{Assumption: PG}, for every $c_1>0$, there exists
$c_2>0$ large enough so that $V(x)\geq c_1\log(1+|x|)-c_2$ for every $x\in \mbb R$.
Therefore, we have
\begin{align*}
\Pi_Z(t;x,x)\,\mbf E^{x,x}_t\left[\mr e^{p\mf A_t(B)}\right]^{1/p}
&\leq \frac{\mr e^{c_2 t}}{\sqrt{2\pi t}}\,\mbf E^{x,x}_t\left[\exp\left(-pc_1\int_0^t\log\big(1+|B(s)|\big)\d s\right)\right]^{1/p}\\
&=\frac{\mr e^{c_2 t}}{\sqrt{2\pi t}}\,\mbf E^{0,0}_t\left[\exp\left(-pc_1\int_0^t\log\big(1+|x+B(s)|\big)\d s\right)\right]^{1/p}.
\end{align*}
By using the inequalities
\begin{align}
\label{Equation: Log Lower Bound}
\log(1+|x+z|)\geq\log(1+|x|)-\log(1+|z|)\geq\log(1+|x|)-|z|,
\end{align}
which are valid for all $z\in\mbb R$, we get the further upper bound
\[\frac{\mr e^{c_2 t-c_1 t\log(1+|x|)}}{\sqrt{2\pi t}}\,\mbf E^{0,0}_t\left[\exp\left(pc_1\int_0^t|B(s)|\d s\right)\right]^{1/p}.\]
On the one hand, a Brownian scaling implies that
\begin{multline}\label{Equation: Bessel Bridge Area}
\mbf E^{0,0}_t\left[\exp\left(pc_1\int_0^t|B(s)|\d s\right)\right]\\
=\mbf E^{0,0}_1\left[\exp\left(t^{3/2}pc_1\int_0^1|B(s)|\d s\right)\right]
\leq \mbf E\left[\exp\left(t^{3/2}pc_1\mc S\right)\right],
\end{multline}
where $\mc S=\sup_{s\in[0,1]}|B_1^{0,0}(s)|$. Note that $s\mapsto|B^{0,0}_1(s)|$ is a Bessel bridge of dimension one (see, for instance, \cite[Chapter XI]{RevuzYor}).
Consequently, we know that \eqref{Equation: Bessel Bridge Area} is finite for any $t,p,c_1>0$ thanks to the tail asymptotic for $\mc S$ in \cite[Remark 3.1]{GruetShi}
(the Bessel bridge is denoted by $\rho$ in that paper).
On the other hand, for any $t>0$, we can choose $c_1>0$ large enough so that
\[\int_{\mbb R}e^{-c_1 t\log(1+|x|)}\d x=\int_\mbb R(1+|x|)^{-c_1t}\d x<\infty,\]
concluding the proof in Case 1.

For Case 2, by H\"older's inequality, we have that
\begin{multline*}
\Pi_X(t;x,x)\,\mbf E^{x,x}_t\left[\mr e^{p\mf A_t(X)}\right]^{1/p}\\
\leq\Pi_X(t;x,x)\,\mbf E^{x,x}_t\Big[\mr e^{-\langle L_t(X),2pV\rangle}\Big]^{1/2p}\sup_{x\in(0,\infty)}\mbf E^{x,x}_t\Big[\mr e^{2p\bar\al \mf L^0_t(X)}\Big]^{1/2p}.
\end{multline*}
The supremum of exponential moments of local time can be bounded by a direct application of Lemma \ref{Lemma: Bridge Local Time}. Then, by \eqref{Equation: Reflected Motion 3}, we have that
\[\int_0^\infty\Pi_X(t;x,x)\,\mbf E^{x,x}_t\Big[\mr e^{-\langle L_t(X),2pV\rangle}\Big]^{1/2p}\d x
\leq\frac{2\sqrt{2}}{\sqrt{\pi t}}\int_0^\infty\mbf E^{x,x}_t\Big[\mr e^{-\langle L_t(|B|),2pV\rangle}\Big]^{1/2p}\d x.\]
This term can be controlled in the same way as Case 1.

For Case 3, since $I=(0,b)$ is finite and $V\geq0$ (hence $\mr e^{-\langle L_t(Y),pV\rangle}\leq1$),
\begin{multline*}
\int_0^b\Pi_Y(t;x,x)\,\mbf E^{x,x}_t\Big[\mr e^{-\langle L_t(Y),pV\rangle+p\bar\al \mf L^0_t(Y)+p\bar\be \mf L^b_t(Y)}\Big]^{1/p}\d x\\
\leq b\left(\sup_{x\in(0,b)}\Pi_Y(t;x,x)\right)\left(\sup_{x\in(0,b)}\mbf E^{x,x}_t\Big[\mr e^{p\bar\al \mf L^0_t(Y)+p\bar\be \mf L^b_t(Y)}\Big]^{1/p}\right).
\end{multline*}
This is finite by Lemmas \ref{Lemma: Transition Bounds} and \ref{Lemma: Bridge Local Time}.
\end{proof}

\subsection{Proof of Proposition \ref{Proposition: Approximate Operator Properties}}
\label{Section: First Semigroup Proposition}
\label{Section: Proof of Operator Convergence}

Suppose we can prove that for every $\eps>0$, the potential $V+\Xi_\eps'$
satisfies Assumption \ref{Assumption: PG} with probability one
(up to a random additive constant, making it nonnegative). Then,
by Proposition \ref{Proposition: Classical Schrodinger Operator},
the ${\hat H}_\eps$ are self-adjoint with compact resolvent.
Moreover, $\hat K_\eps(t)=\mr e^{-t\hat H_\eps}$
and the properties
\eqref{Equation: Approximate Symmetry Property}--\eqref{Equation: Approximate Spectral Expansion}
then follow from Corollary \ref{Corollary: Feynman-Kac with V},
and the fact that $\mr e^{-t\hat H}$ is trace class follows
from Lemma \ref{Lemma: Deterministic Compactness}
in the case $p=1$. Thus, it only remains to prove the following:

\begin{lemma}\label{Lemma: Smoothed Potential Assumptions}
For every $\eps>0$, there exists a random $c=c(\eps)\geq0$ such that
the potential $V+\Xi_\eps'+c$ satisfies Assumption \ref{Assumption: PG}
with probability one.
\end{lemma}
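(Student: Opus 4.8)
The plan is to verify, for each $\eps>0$, the three requirements of Assumption \ref{Assumption: PG} for the potential $V+\Xi_\eps'+c$ with a suitable finite random constant $c=c(\eps)\geq0$, using only that $\Xi_\eps'=\Xi*(\rho_\eps)'$ is a continuous stationary centered Gaussian process on $\mbb R$.

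First I would record two elementary properties of $\Xi_\eps'$. Since $\Xi$ is continuous and $(\rho_\eps)'$ is smooth and compactly supported, $\Xi_\eps'$ has locally bounded (indeed $C^\infty$) sample paths, so it is locally integrable on $I$'s closure; hence $V+\Xi_\eps'+c$ is locally integrable for every constant $c$, as $V$ is locally integrable by Assumption \ref{Assumption: PG}. Second, because $\Xi$ has stationary increments and $(\rho_\eps)'$ integrates to zero, $\Xi_\eps'$ is a stationary centered Gaussian process, so by Corollary \ref{Corollary: Stationary Noise Suprema} there is a finite random $C>0$ with
\[|\Xi_\eps'(x)|\leq C\sqrt{\log(2+|x|)}\qquad\text{for all }x\in\mbb R.\]

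It then remains to choose $c$ making $V+\Xi_\eps'+c$ nonnegative while, in the unbounded cases, preserving the growth condition \eqref{Equation: Assumptions}. If $I=(0,b)$, take $c:=\sup_{x\in[0,b]}|\Xi_\eps'(x)|<\infty$; then $V+\Xi_\eps'+c\geq V\geq0$ on $\bar I$, and there is no growth condition to check. If $I$ is unbounded, the displayed bound together with $V(x)/\log|x|\to\infty$ shows that $V(x)\geq|\Xi_\eps'(x)|$ outside some compact $K\subset\bar I$; set $c:=\sup_{x\in K}|\Xi_\eps'(x)|<\infty$. Then $V+\Xi_\eps'+c\geq0$ on all of $\bar I$ (on $K$ since $\Xi_\eps'+c\geq0$ and $V\geq0$; off $K$ since $V+\Xi_\eps'\geq0$ and $c\geq0$), and moreover $(\Xi_\eps'(x)+c)/\log|x|\to0$ while $V(x)/\log|x|\to\infty$, so $\liminf_{x\to\pm\infty}(V(x)+\Xi_\eps'(x)+c)/\log|x|=\infty$. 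Hence $V+\Xi_\eps'+c$ satisfies Assumption \ref{Assumption: PG}.

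I do not expect a genuine obstacle here: the one substantive ingredient is the almost-sure $\sqrt{\log}$ growth bound for stationary Gaussian processes (Corollary \ref{Corollary: Stationary Noise Suprema}), which is already available, and the only point requiring care is to note that nonnegativity is needed only after adding a finite random constant, which affects neither local integrability nor the $\log$-growth condition. In the bounded-interval case the growth bound is not even needed.
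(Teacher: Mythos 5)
Your proof is correct and follows essentially the same approach as the paper: both use Corollary \ref{Corollary: Stationary Noise Suprema} to obtain the $\sqrt{\log}$ growth bound on $\Xi_\eps'$, both use continuity of $\Xi_\eps'$ for local integrability, and both absorb the negative part by a finite random constant. Your separate treatment of the bounded case and the explicit justification that $\Xi_\eps'$ is stationary (stationary increments of $\Xi$ plus $\int(\rho_\eps)'=0$) are minor elaborations of what the paper leaves implicit, not a different route.
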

\begin{proof}
Since $\Xi_\eps'$ is continuous, $V+\Xi_\eps'$ is locally integrable on $I$'s closure.
Moreover, if we prove that $|\Xi_\eps'(x)|\ll\log|x|$ as $x\to\pm\infty$,
then the continuity of $\Xi_\eps'$ also implies that $V+\Xi_\eps'$ is bounded below and is such that
\[\lim_{x\to\pm\infty}\frac{V(x)+\Xi_\eps'(x)}{\log|x|}=\infty;\]
hence we can take
\[c(\eps):=\max\left\{0,-\inf_{x\in I}\big(V(x)+\Xi_\eps'(x)\big)\right\}<\infty.\]
The fact that $|\Xi_\eps'(x)|\ll\log|x|$ follows from Corollary \ref{Corollary: Stationary Noise Suprema},
since $\Xi_\eps'$ is stationary.
\end{proof}

\subsection{Proof of Proposition \ref{Proposition: Operator Convergence}}

Our proof of this result is similar to \cite[Section 2]{BloemendalVirag}
and \cite[Section 5]{RamirezRiderVirag}, save for the fact that we use smooth approximations
instead of discrete ones. We provide the argument in full.
Following \cite[Fact 2.2]{BloemendalVirag} and \cite[Fact 2.2]{RamirezRiderVirag},
we begin by recording some compactness properties of $\|\cdot\|_*$.

\begin{lemma}\label{Lemma: L Star Compactness}
If $(f_n)_{n\in\mbb N}\subset D(\mc E)$ is such that $\sup_n\|f_n\|_*<\infty$, then
there exists $f\in D(\mc E)$ and a subsequence $(n_i)_{i\in\mbb N}$ along which
\begin{enumerate}
\item $\displaystyle\lim_{i\to\infty}\|f_{n_i}-f\|_2=0;$
\item $\displaystyle\lim_{i\to\infty}\langle g,f'_{n_i}\rangle=\langle g,f'\rangle$ for every $g\in L^2$;
\item $\displaystyle\lim_{i\to\infty}f_{n_i}=f$ uniformly on compact sets; and
\item $\displaystyle\lim_{i\to\infty}\langle g,f_{n_i}\rangle_*=\langle g,f\rangle_*$
for every $g\in D(\mc E)$.
\end{enumerate}
\end{lemma}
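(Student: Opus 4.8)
The plan is to prove Lemma~\ref{Lemma: L Star Compactness} by combining weak compactness in the Hilbert space $(\mr H^1_V,\langle\cdot,\cdot\rangle_*)$ with a Rellich--Kondrachov--type local compact embedding $\mr H^1_V\hookrightarrow L^2_{\mr{loc}}$, and then upgrading local $L^2$ convergence to global $L^2$ convergence using the tightness provided by the potential $V$. The four conclusions will then be harvested one at a time from a single subsequence. First I would observe that $(\mr H^1_V,\langle\cdot,\cdot\rangle_*)$ is a Hilbert space (completeness is already noted in the proof of Proposition~\ref{Proposition: Classical Schrodinger Operator}), so a bounded sequence $(f_n)$ has a weakly convergent subsequence, say $f_{n_i}\rightharpoonup f$ in $\|\cdot\|_*$; this immediately gives conclusion (4), and also gives $\langle g,f'_{n_i}\rangle\to\langle g,f'\rangle$ and $\langle g,(V+1)^{1/2}f_{n_i}\rangle\to\langle g,(V+1)^{1/2}f\rangle$ for all $g\in L^2$, since $f\mapsto f'$ and $f\mapsto(V+1)^{1/2}f$ are bounded linear maps from $(\mr H^1_V,\|\cdot\|_*)$ to $L^2$, which in particular yields conclusion (2).

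Next I would establish (3). On any bounded subinterval $J\subset I$ (with closure in $I$'s closure), the sequence $(f_{n_i})$ is bounded in the ordinary Sobolev norm $H^1(J)$, since $\|f'_{n_i}\|_{L^2(J)}\le\|f_{n_i}\|_*$ and $\|f_{n_i}\|_{L^2(J)}\le\|f_{n_i}\|_*$. In one dimension $H^1(J)\hookrightarrow C(\bar J)$ and the sequence $(f_{n_i})$ is uniformly bounded and equi-Hölder-$\tfrac12$ on $\bar J$ (by $|f(x)-f(y)|\le\|f'\|_{L^2(J)}|x-y|^{1/2}$), so by Arzelà--Ascoli and a diagonal argument over an exhausting sequence of such $J$, a further subsequence (not relabeled) converges uniformly on compact subsets of $I$'s closure; the uniform limit must agree with the weak limit $f$ (e.g.\ test against indicator functions), giving (3).

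Finally, conclusion (1): I would use the potential to control the tails. Since $V(x)\to\infty$ (in Cases~1 and 2, by Assumption~\ref{Assumption: PG}) or $I$ is bounded (Case~3), for any $\delta>0$ there is a compact $K\subset I$ such that $V+1\ge\delta^{-1}$ on $I\setminus K$; hence for every $n$,
\[
\int_{I\setminus K}|f_{n_i}|^2\d x\le\delta\int_{I\setminus K}(V+1)|f_{n_i}|^2\d x\le\delta\sup_n\|f_n\|_*^2,
\]
and likewise for $f$ by weak lower semicontinuity. Combined with the uniform-on-compacts convergence from (3) (which controls $\int_K|f_{n_i}-f|^2\d x$), this gives $\|f_{n_i}-f\|_2\to0$, i.e.\ (1). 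In Case~3 the tail term is absent and (1) follows directly from (3) since $I$ is bounded. The main obstacle I anticipate is purely bookkeeping rather than conceptual: making sure the same subsequence simultaneously realizes weak $\|\cdot\|_*$-convergence, uniform-on-compacts convergence, and the tightness estimate, and checking that the various limits (weak $H^1_V$-limit, weak $L^2$-limit of derivatives, uniform-on-compacts limit) all coincide with a single $f\in D(\mc E)$ — in particular that the boundary-condition constraints defining $D(\mc E)$ in the Dirichlet/mixed cases (e.g.\ $f(0)=0$) pass to the limit, which they do because point evaluation is continuous under uniform-on-compacts convergence.
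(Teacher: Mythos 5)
Your proof is correct, and it follows the same route as the argument the paper delegates to \cite[Fact 2.2]{BloemendalVirag} and \cite[Fact 2.2]{RamirezRiderVirag}: weak compactness of a bounded sequence in the Hilbert space $(D(\mc E),\langle\cdot,\cdot\rangle_*)$, local uniform convergence via the one-dimensional Sobolev embedding and Arzel\`a--Ascoli, and global $L^2$ convergence via the tightness supplied by $V(x)\to\infty$ (or the boundedness of $I$). The paper itself states the lemma without supplying a proof, so your self-contained write-up — including the observation that the boundary constraints defining $D(\mc E)$ pass to the limit through the uniform-on-compacts convergence — simply fills in what the paper leaves to the references.
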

\begin{proof}
(2) and (4) follow from the Banach-Alaoglu theorem. Next, by combining
Lemma \ref{Lemma: Pointwise Form Bound} with the estimate
\begin{align}
\label{Equation: Uniformly Equicontinuous}
|f_n(x)-f_n(y)|\leq\int_x^y|f'_n(x)|\d x\leq\|f_n'\|_2\sqrt{|y-x|},
\end{align}
we may extract a further subsequence along which (3) holds
by the Arzel\`a-Ascoli theorem.
Finally, in Case 3, (3) immediately implies (1), whereas in Cases 1 and 2,
by combining (3) with the Vitali convergence
theorem, to prove (1) it suffices to show that for every $\eps>0$, there exists
$K>0$ large enough and $\de>0$ small enough so that
\[\int_{I\setminus[-K,K]}f_{n_i}(x)^2\d x\leq\eps^2
\qquad\text{and}\qquad
\sup_{x\in I}\int_{I\cap[x-\de,x+\de]}f_{n_i}(x)^2\d x\leq\eps^2.\]
The first of these conditions follows from the fact that $\sup_n\|V^{1/2}f_n\|_2<\infty$
and that $V(x)\gg\log|x|$; the second follows from the uniform bound in Lemma \ref{Lemma: Pointwise Form Bound}.
\end{proof}

\begin{remark}\label{Remark: Compactness to Deterministic Convergence}
It is easy to see by definition of $\langle\cdot,\cdot\rangle_*$
that if $f_n\to f$ in the sense of Lemma \ref{Lemma: L Star Compactness} (1)--(4),
then for every $g\in \mr{FC}$, one has
\[\lim_{n\to\infty}\mc E(g,f_n)=\mc E(g,f).\]
\end{remark}

We can reformulate Proposition \ref{Proposition: Form Bound} in terms of $\|\cdot\|_*$ thusly:

\begin{lemma}\label{Lemma: Convergence Form Bound}
There exist finite random variables $c_1,c_2,c_3>0$ such that
\[c_1\|f\|_*^2-c_2\|f\|_2^2\leq\mc {\hat E}(f,f)\leq c_3\|f\|_*^2,\qquad f\in D(\mc E).\]
\end{lemma}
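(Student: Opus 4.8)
The plan is to prove the two-sided bound $c_1\|f\|_*^2-c_2\|f\|_2^2\leq\hat{\mc E}(f,f)\leq c_3\|f\|_*^2$ for $f\in D(\mc E)$ by combining the definition of $\hat{\mc E}=\mc E+\xi$ with the infinitesimal form bound from Proposition \ref{Proposition: Form Bound}, keeping careful track of the boundary terms that appear in Cases 2-R, 3-R, and 3-M. First I would record the elementary comparison between $\|f\|_*^2=\|f'\|_2^2+\|(V+1)^{1/2}f\|_2^2$ and the quantity $Q(f):=\tfrac12\|f'\|_2^2+\|V^{1/2}f\|_2^2$: one has $Q(f)\leq\|f\|_*^2$ and $\|f\|_*^2\leq 2Q(f)+\|f\|_2^2$ trivially, so it suffices to obtain the bound with $Q(f)$ in place of $\|f\|_*^2$ (adjusting constants). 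This reduces everything to controlling $\mc E(f,f)$ from above and below in terms of $Q(f)$ and $\|f\|_2^2$, and controlling $|\xi(f^2)|$.

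The upper bound is the easier direction. By Proposition \ref{Proposition: Form Bound} applied with, say, $\theta=1$, there is a finite random $c>0$ with $\xi(f^2)\leq\mc E(f,f)+c\|f\|_2^2$, so $\hat{\mc E}(f,f)=\mc E(f,f)+\xi(f^2)\leq 2\mc E(f,f)+c\|f\|_2^2$. In Cases 1, 2-D, 3-D we have $\mc E(f,f)=Q(f)\leq\|f\|_2^2+Q(f)\le\|f\|_*^2+\|f\|_*^2$, and combining gives $\hat{\mc E}(f,f)\leq c_3\|f\|_*^2$ with $c_3$ depending on $c$. In the Robin/mixed cases $\mc E(f,f)=Q(f)-\tfrac\al2 f(0)^2$ (and similarly with an $f(b)^2$ term), so I would invoke Lemma \ref{Lemma: Pointwise Form Bound} to absorb $f(0)^2$ (and $f(b)^2$) into $\ka Q(f)+c'\|f\|_2^2$ for small $\ka$; this still yields $\mc E(f,f)\leq C(Q(f)+\|f\|_2^2)\leq C\|f\|_*^2$ and hence the claimed upper bound.

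For the lower bound I would again use Proposition \ref{Proposition: Form Bound}, but this time with a small parameter: pick $\theta=1/2$ to get a finite random $c$ with $|\xi(f^2)|\leq\tfrac12\mc E(f,f)+c\|f\|_2^2$, but actually it is cleaner to use the reduced form bound \eqref{Equation: Form Bound Reduction}, which gives $|\xi(f^2)|\leq\theta Q(f)+c\|f\|_2^2$ directly in terms of $Q(f)$. Then
\begin{align*}
\hat{\mc E}(f,f)&=\mc E(f,f)+\xi(f^2)\geq \mc E(f,f)-\theta Q(f)-c\|f\|_2^2.
\end{align*}
In Cases 1, 2-D, 3-D, $\mc E(f,f)=Q(f)$, so with $\theta=1/2$ this is $\geq\tfrac12 Q(f)-c\|f\|_2^2\geq\tfrac14\|f\|_*^2-(c+\tfrac14)\|f\|_2^2$, using $\|f\|_*^2\le 2Q(f)+\|f\|_2^2$; this is the desired inequality with $c_1=1/4$, $c_2=c+1/4$. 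In the Robin/mixed cases I must also handle $-\tfrac\al2 f(0)^2$ (and $-\tfrac\be2 f(b)^2$), which are potentially negative; here I apply Lemma \ref{Lemma: Pointwise Form Bound} once more, bounding $|{-\tfrac\al2}f(0)^2|\leq\tfrac{|\al|}2(\ka Q(f)+c'\|f\|_2^2)$, and choose $\theta$ and $\ka$ small enough (depending on $\al,\be$) that the total coefficient of $Q(f)$ subtracted off stays strictly below $1$, leaving a positive multiple of $Q(f)$ and hence of $\|f\|_*^2$ after subtracting a multiple of $\|f\|_2^2$.

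The main obstacle — though a minor one — is bookkeeping: in the Robin and mixed cases one has to apply Lemma \ref{Lemma: Pointwise Form Bound} twice (once for the boundary term inside $\mc E$ and once implicitly inside Proposition \ref{Proposition: Form Bound}'s proof, though the stated form bound already packages this) and then choose the free parameters $\theta,\ka$ in the correct order so that all coefficients of $Q(f)$ balance to leave something strictly positive. There is no analytic difficulty beyond what is already in Propositions \ref{Proposition: Form Bound} and Lemma \ref{Lemma: Pointwise Form Bound}; the equivalence of $\|\cdot\|_*$ with the $\mc E$-graph norm, noted in the remark before the statement, is exactly what makes the translation routine. As usual the bound holds first for $f\in\mr C^\infty_0$ and extends to $D(\mc E)$ by continuity, since $\mr C^\infty_0$ is $\|\cdot\|_*$-dense and both sides are continuous in $\|\cdot\|_*$.
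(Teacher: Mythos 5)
Your proposal is correct and fills in exactly the argument the paper leaves implicit. The paper offers no written proof here — it introduces the lemma as a ``reformulation'' of Proposition \ref{Proposition: Form Bound} in terms of $\|\cdot\|_*$, relying on the remark preceding the statement (that $\|\cdot\|_*$ is equivalent to the $+1$-norm of $\mc E$) to make the translation routine. Your write-up is faithful to that intent: reducing to the quantity $Q(f)=\tfrac12\|f'\|_2^2+\|V^{1/2}f\|_2^2$ via $Q(f)\le\|f\|_*^2\le 2Q(f)+\|f\|_2^2$, invoking the reduced bound \eqref{Equation: Form Bound Reduction} for the lower estimate, and absorbing the Robin/mixed boundary terms with Lemma \ref{Lemma: Pointwise Form Bound} after choosing the parameters $\theta,\kappa$ small in the right order. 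The only blemishes are cosmetic constant-chasing (e.g.\ $Q(f)+\|f\|_2^2\le 2\|f\|_*^2$, not $\|f\|_*^2$), and the final remark about extending from $\mr C^\infty_0$ to $D(\mc E)$ is superfluous since Proposition \ref{Proposition: Form Bound} and Lemma \ref{Lemma: Pointwise Form Bound} are already stated on $D(\mc E)$. Neither affects the validity of the argument.
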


We also have the following finite $\eps$ variant:

\begin{lemma}\label{Lemma: Convergence Approximate Form Bound}
There exist finite random variables $\tilde c_1,\tilde c_2,\tilde c_3>0$ such that
for every $\eps\in(0,1]$,
\[\tilde c_1\|f\|_*^2-\tilde c_2\|f\|_2^2\leq\mc {\hat E}_\eps(f,f)\leq\tilde c_3\|f\|_*^2,\qquad f\in D(\mc E).\]
\end{lemma}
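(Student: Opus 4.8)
The strategy is to establish an $\eps$-uniform version of Proposition~\ref{Proposition: Form Bound} and then deduce the two-sided $\|\cdot\|_*$ estimate exactly as Lemma~\ref{Lemma: Convergence Form Bound} is deduced from Proposition~\ref{Proposition: Form Bound}. Concretely, it suffices to establish the following:
\begin{equation*}
\big|\langle f^2,\Xi_\eps'\rangle\big|\le\theta\big(\tfrac12\|f'\|_2^2+\|V^{1/2}f\|_2^2\big)+c\,\|f\|_2^2,\qquad f\in D(\mc E),\ \eps\in(0,1],
\tag{$\star$}
\end{equation*}
for every $\theta>0$ and some finite random $c=c(\theta)>0$ \emph{not depending on $\eps$}. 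Indeed, given $(\star)$, handling the boundary terms of $\mc E$ with Lemma~\ref{Lemma: Pointwise Form Bound} (with a small parameter $\ka$) converts it into $|\langle f^2,\Xi_\eps'\rangle|\le\theta\,\mc E(f,f)+c\|f\|_2^2$ with $c$ still $\eps$-free; then $\hat{\mc E}_\eps(f,f)=\mc E(f,f)+\langle f^2,\Xi_\eps'\rangle$ together with the (again $\eps$-free) bounds $\mc E(f,f)\le c'\|f\|_*^2$ and $\mc E(f,f)\ge c''\|f\|_*^2-c'''\|f\|_2^2$ --- once more a consequence of Lemma~\ref{Lemma: Pointwise Form Bound} --- gives the claimed inequality after choosing $\theta$ small enough. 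As in the proof of Proposition~\ref{Proposition: Form Bound}, it is enough to prove $(\star)$ for $f\in\mr C^\infty_0$ and then extend to $D(\mc E)$ by continuity, polarizing the $\mr C^\infty_0$-bound to control $\langle gh,\Xi_\eps'\rangle$ along an approximating sequence; as there, well-definedness of $\langle f^2,\Xi_\eps'\rangle$ on $D(\mc E)$ is part of the statement and follows from the bound.

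For $(\star)$ in Cases~1 and 2, set $\tilde\Xi(x):=\int_x^{x+1}\Xi(y)\d y$ (a $C^1$ process with $\tilde\Xi'(x)=\Xi(x+1)-\Xi(x)$ stationary) and $\tilde\Xi_\eps:=\tilde\Xi*\rho_\eps$, and fix $r_0>0$ with $\mr{supp}(\rho)\subset[-r_0,r_0]$, so that $\mr{supp}(\rho_\eps)\subset[-r_0,r_0]$ for all $\eps\in(0,1]$. For $f\in\mr C^\infty_0$, an integration by parts as in the proof of Proposition~\ref{Proposition: Form Bound} gives
\[
\langle f^2,\Xi_\eps'\rangle=-2\langle f'f,\Xi_\eps\rangle=\langle f^2,\tilde\Xi_\eps'\rangle+2\langle f'f,\tilde\Xi_\eps-\Xi_\eps\rangle
\]
(in the Robin case there are additional boundary terms, each of the form (finite random constant)$\,\times f(0)^2$ with the constant bounded by $\sup_{x\in[-r_0,r_0]}|\Xi(x)|<\infty$ a.s.\ uniformly in $\eps$, hence harmless by Lemma~\ref{Lemma: Pointwise Form Bound}). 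The crucial observation is that the two Gaussian fields appearing here obey growth bounds uniform in $\eps$: since $\tilde\Xi_\eps'=\tilde\Xi'*\rho_\eps$ and $\tilde\Xi_\eps-\Xi_\eps=(\tilde\Xi-\Xi)*\rho_\eps$, with $\rho_\eps$ a probability density supported in $[-r_0,r_0]$, the pointwise bounds \eqref{Equation: Integrated Noise Bounds} for the \emph{fixed} processes $\tilde\Xi'$ and $\tilde\Xi-\Xi$, combined with $\log(2+|x|+r_0)\le\big(1+\log(1+r_0)\big)\log(2+|x|)$, furnish a single finite random $C>0$ with
\[
|\tilde\Xi_\eps'(x)|\le C\log(2+|x|),\qquad\big(\tilde\Xi_\eps(x)-\Xi_\eps(x)\big)^2\le C\log(2+|x|)
\]
for all $x\in I$ and all $\eps\in(0,1]$. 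Inserting these into the computation of the proof of Proposition~\ref{Proposition: Form Bound} --- splitting $C\log(2+|x|)$ against $V(x)$ via Assumption~\ref{Assumption: PG} as in \eqref{Equation: Integrated Noise Bounds 2}, and using $|z\bar z|\le\tfrac12(z^2+\bar z^2)$ --- yields $(\star)$ with $c=c(\theta)$ depending only on $\theta$ and $C$, in particular not on $\eps$.

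In Case~3 one argues directly: for $f\in\mr C^\infty_0$ on $(0,b)$, integration by parts gives $\langle f^2,\Xi_\eps'\rangle=f(b)^2\Xi_\eps(b)-f(0)^2\Xi_\eps(0)-2\langle f'f,\Xi_\eps\rangle$, and since $\Xi$ is a.s.\ continuous, $C:=\sup_{\eps\in(0,1]}\sup_{x\in[0,b]}|\Xi_\eps(x)|\le\sup_{x\in[-r_0,b+r_0]}|\Xi(x)|$ is a.s.\ finite, whence $|\langle f^2,\Xi_\eps'\rangle|\le C\big(f(0)^2+f(b)^2\big)+2C\int_0^b|f'f|\d x$; Lemma~\ref{Lemma: Pointwise Form Bound} together with $2C|f'f|\le\tfrac\theta2(f')^2+\tfrac{2C^2}\theta f^2$ then gives $(\star)$ (in fact a stronger bound not involving $V$), uniformly in $\eps$. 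I expect the one genuine subtlety to be precisely this uniformity: one must avoid applying Corollary~\ref{Corollary: Stationary Noise Suprema} to the $\eps$-dependent family $\{\Xi_\eps'\}_\eps$, and instead channel every stochastic estimate through the two \emph{fixed} processes $\tilde\Xi'$ and $\tilde\Xi-\Xi$ (Cases~1 and 2), or through $\Xi$ restricted to the fixed compact set $[-r_0,b+r_0]$ (Case~3). Everything else is a line-by-line repetition of the arguments already carried out for Proposition~\ref{Proposition: Form Bound} and for the $\|\cdot\|_*$-reformulation behind Lemma~\ref{Lemma: Convergence Form Bound}.
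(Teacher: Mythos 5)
Your proposal is correct and takes essentially the same route as the paper's proof: both reduce to establishing $\eps$-uniform pointwise growth bounds for the smoothed noise by exploiting that $\rho_\eps$ is a probability density supported in a fixed compact set independent of $\eps\in(0,1]$, so that convolution preserves the logarithmic growth coming from Corollary~\ref{Corollary: Stationary Noise Suprema} (the paper packages this as a single estimate on $\sup_{y\in[0,1]}|\Xi_\eps(x+y)-\Xi_\eps(x)|$ via Remark~\ref{Remark: Stronger Stationary Noise Suprema}, whereas you bound $\tilde\Xi_\eps'$ and $\tilde\Xi_\eps-\Xi_\eps$ separately, which amounts to the same thing). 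One small slip: the claimed inequality $\log(2+|x|+r_0)\le\big(1+\log(1+r_0)\big)\log(2+|x|)$ is false in general (e.g.\ $x=0$, $r_0=10$), but since $\log(2+|x|+r_0)/\log(2+|x|)$ is bounded in $x$ there is certainly some $c_{r_0}>0$ with $\log(2+|x|+r_0)\le c_{r_0}\log(2+|x|)$, so the argument is unaffected.
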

\begin{proof}
By repeating the proof of Proposition \ref{Proposition: Form Bound}, we only need to prove
that for every $\theta>0$, there exists $c>0$ large enough so that
\[|\langle f^2,\Xi_\eps'\rangle|\leq\theta\big(\tfrac12\|f'\|_2^2+\|V^{1/2}f\|_2^2\big)+c\|f\|_2^2\]
for every $\eps\in(0,1]$ and $f\in\mr C^\infty_0$.
Let us define
\[\tilde \Xi_\eps(x):=\int_x^{x+1}\Xi_\eps(y)\d y.\]
Arguing as in the proof of Proposition \ref{Proposition: Form Bound},
it suffices to show that
\[\sup_{\eps\in(0,1]}\sup_{0\leq x\leq b}|\Xi_\eps(x)|<\infty\]
almost surely and that there exist finite random variables $C>0$ and $u>1$ independent of $\eps\in(0,1]$
such that for every $x\in\mbb R$,
\[\sup_{y\in[0,1]}|\Xi_\eps(x+y)-\Xi_\eps(x)|\leq C\sqrt{\log(u+|x|)}.\]

Let $K>0$ be such that $\mr{supp}(\rho)\subset[-K,K]$ so that
$\mr{supp}(\rho_\eps)\subset[-K,K]$ for all $\eps\in(0,1]$.
On the one hand, since the $\rho_\eps$ integrate to one,
\[\sup_{\eps\in(0,1]}\sup_{0\leq x\leq b}\left|\int_{\mbb R}\Xi(x-y)\rho_\eps(y)\d y\right|
\leq\sup_{-K\leq x\leq b+K}|\Xi(x)|<\infty.\]
On the other hand, by Corollary \ref{Corollary: Stationary Noise Suprema}
and Remark \ref{Remark: Stronger Stationary Noise Suprema}, for every $x\in I$ and $\eps\in(0,1]$,
one has
\begin{multline*}
\sup_{y\in[0,1]}\left|\int_{\mbb R}\big(\Xi(x+y-z)-\Xi(x-z)\big)\rho_\eps(z)\d y\right|\\
\leq\sup_{w\in[x-K,x+K]}\sup_{y\in[0,1]}|\Xi(w+y)-\Xi(w)|
\leq\sup_{w\in[x-K,x+K]}C\sqrt{\log(2+|w|)},
\end{multline*}
which yields the desired estimate.
\end{proof}

\begin{remark}
We see from Lemma \ref{Lemma: Convergence Approximate Form Bound} that
the forms
$(f,g)\mapsto\langle fg,\Xi_\eps'\rangle$ are {\it uniformly form-bounded} in $\eps\in(0,1]$ by $\mc E$,
 in the sense that there exists a $0<\theta<1$ and a random $c>0$ independent
of $\eps$ such that
\[|\langle f^2,\Xi_\eps'\rangle|\leq\theta\mc E(f,f)+c\|f\|_2^2,\qquad f\in D(\mc E),~\eps\in(0,1].\]
Among other things, this implies by the variational principle
(see, for example, the estimate in \cite[Theorem XIII.68]{ReedSimonIV})
that for every $k\in\mbb N$ and $\eps\in(0,1]$, one has
\begin{align}\label{Equation: Uniform Eigenvalue Bounds}
(1-\theta)\la_k( H)-c\leq\la_k({\hat H}_\eps)\leq(1+\theta)\la_k( H)+c.
\end{align}
\end{remark}

Finally, we need the following convergence result.

\begin{lemma}
Almost surely, for every $f,g\in \mr{FC}$, it holds that
\begin{align}\label{Equation: Noise Convergence 1}
\lim_{\eps\to0}\langle fg,\Xi_\eps'\rangle=\xi(fg).
\end{align}
Moreover, if $(\eps_n)_{n\in\mbb N}\subset(0,1]$ converges
to zero, $\sup_n\|f_n\|_*<\infty$, and $f_n\to f$ in the sense of Lemma
\ref{Lemma: L Star Compactness} (1)--(4),
then almost surely,
\begin{align}\label{Equation: Noise Convergence 2}
\lim_{n\to\infty}\langle f_ng,\Xi_{\eps_n}'\rangle=\xi(fg)
\end{align}
for every $g\in \mr{FC}$.
\end{lemma}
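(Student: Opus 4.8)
The plan is to reduce both limits to pairings against the \emph{continuous} (indeed $C^1$) processes $\Xi_\eps=\Xi*\rho_\eps$, which converge to $\Xi$ uniformly on compact sets as $\eps\to0$; all of the work is then deterministic on the probability-one event that $\Xi$ is continuous (Assumption \ref{Assumption: FN}), so this single null set suffices for the ``almost surely, for every $f,g$'' phrasing. Since $\Xi_\eps$ is $C^1$ and $fg$ is piecewise $C^1$ with compact support whenever $f\in\mr{AC}$ and $g\in\mr C^\infty_0$, a classical integration by parts gives $\langle fg,\Xi_\eps'\rangle=-\langle f'g+fg',\Xi_\eps\rangle+\mc B_\eps$, where $\mc B_\eps$ collects the boundary values of $-fg\,\Xi_\eps$ at $\partial I$ (namely $f(0)g(0)\Xi_\eps(0)$, and in Case 3 also $-f(b)g(b)\Xi_\eps(b)$). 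As $\eps\to0$ one has $\Xi_\eps(c)\to\Xi(c)$ for $c\in\{0,b\}$, so $\mc B_\eps$ converges to the corresponding boundary values of $-fg\,\Xi$; because $\Xi(0)=0$, these reduce to precisely the boundary term appearing in the definition of $\xi(fg)$ in Definition \ref{Definition: Forms} (trivial except in Case 3-R, where $f(b)g(b)\Xi(b)$ survives). This identity is the backbone of both parts. Alternatively one can invoke \eqref{Equation: Crucial Coupling} and \eqref{Equation: Distribution Extension} directly, which amounts to the same bookkeeping.

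For \eqref{Equation: Noise Convergence 1}, with $f,g\in\mr C^\infty_0$ fixed, $f'g+fg'$ is a fixed bounded compactly supported function on whose support $\Xi_\eps\to\Xi$ uniformly, so $\langle f'g+fg',\Xi_\eps\rangle\to\langle f'g+fg',\Xi\rangle$, and together with the convergence of $\mc B_\eps$ this yields $\langle fg,\Xi_\eps'\rangle\to\xi(fg)$ at once. For \eqref{Equation: Noise Convergence 2}, the same integration by parts gives $\langle f_n g,\Xi_{\eps_n}'\rangle=-\langle f_n'g,\Xi_{\eps_n}\rangle-\langle f_n g',\Xi_{\eps_n}\rangle+\mc B_n$, with the pairings over $I$. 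The boundary contribution $\mc B_n$ is built from the constant $g(c)$, from $\Xi_{\eps_n}(c)\to\Xi(c)$, and from $f_n(c)$; by Lemma \ref{Lemma: L Star Compactness}(3) one has $f_n(c)\to f(c)$, and by Lemma \ref{Lemma: Pointwise Form Bound} together with $\sup_n\|f_n\|_*<\infty$ one has $\sup_n|f_n(c)|<\infty$, so $\mc B_n$ converges to the boundary term of $\xi(fg)$ (again $\Xi(0)=0$ kills the one at the origin). The term $\langle f_n g',\Xi_{\eps_n}\rangle$ is handled by plain uniform convergence: $f_n g'\to fg'$ uniformly on the fixed compact $\mr{supp}(g')$ by Lemma \ref{Lemma: L Star Compactness}(3), while $\Xi_{\eps_n}\to\Xi$ uniformly there, so this term tends to $\langle fg',\Xi\rangle$.

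The delicate term is $\langle f_n'g,\Xi_{\eps_n}\rangle=\langle f_n',\,g\,\Xi_{\eps_n}\rangle$, and this is the main obstacle, because $f_n'$ converges only \emph{weakly} in $L^2$. The plan is to use the weak--strong pairing principle --- if $a_n\rightharpoonup a$ weakly and $b_n\to b$ strongly in a Hilbert space then $\langle a_n,b_n\rangle\to\langle a,b\rangle$ --- with $a_n=f_n'$ and $b_n=g\,\Xi_{\eps_n}$. Here $b_n\to g\,\Xi$ strongly in $L^2$, since $\|g(\Xi_{\eps_n}-\Xi)\|_2\le\|g\|_2\,\|\Xi_{\eps_n}-\Xi\|_{L^\infty(\mr{supp}\,g)}\to0$ by uniform convergence of $\Xi_{\eps_n}$ on $\mr{supp}(g)$, and $f_n'\rightharpoonup f'$ weakly in $L^2$, since $\sup_n\|f_n'\|_2\le\sup_n\|f_n\|_*<\infty$ and $\langle h,f_n'\rangle\to\langle h,f'\rangle$ for every $h\in L^2$ by Lemma \ref{Lemma: L Star Compactness}(2). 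Consequently $\langle f_n'g,\Xi_{\eps_n}\rangle\to\langle f'g,\Xi\rangle$, and summing the three limits gives $\langle f_n g,\Xi_{\eps_n}'\rangle\to-\langle f'g+fg',\Xi\rangle+(\text{boundary term of }\xi)=\xi(fg)$, which is the desired conclusion.
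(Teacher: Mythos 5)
Your proof is correct and follows essentially the same route as the paper: integrate by parts to pass from $\Xi_\eps'$ to $\Xi_\eps$, use uniform-on-compacts convergence $\Xi_\eps\to\Xi$ for \eqref{Equation: Noise Convergence 1}, and apply the weak--strong pairing principle (with Lemma \ref{Lemma: L Star Compactness}(1)--(2) supplying the weak convergence and $\Xi_{\eps_n}\mbf 1_K\to\Xi\mbf 1_K$ in $L^2$ the strong convergence) for \eqref{Equation: Noise Convergence 2}. Your handling of the boundary contributions $\mathcal B_\eps$ at $\partial I$ (relevant in Cases 2-R, 3-R, 3-M) is more explicit than the paper's, and your splitting of $\langle f_n'g+f_ng',\Xi_{\eps_n}\rangle$ into a weak--strong term and a strong--strong term is a minor reorganization of the paper's single weak--strong application, but the substance is the same.
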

\begin{proof}
Clearly $\Xi_\eps\to\Xi$ pointwise, hence
for \eqref{Equation: Noise Convergence 1} it suffices to prove that
\[\lim_{\eps\to0}\int_\mbb R\big((f'g+fg')*\rho_\eps\big)(x)\Xi(x)\d x=\langle f'g+fg',\Xi\rangle.\]
Since $f'g+fg'$ is compactly supported and $\Xi$ is continuous
(hence bounded on compacts), the result follows by dominated convergence.

Let us now prove \eqref{Equation: Noise Convergence 2}.
Using again the fact that $g$ and $g'$ are compactly supported, we know that there
exists a compact $K\subset\mbb R$ (in Case 3 we may simply take $K=[0,b]$) such that
\[\langle f_n'g+f_ng',\Xi*\rho_{\eps_n}\rangle\\
=\int_K \big(f_n'(x)g(x)+f_n(x)g'(x)\big)(\Xi*\rho_{\eps_n})(x)\d x\]
and similarly with $f_n$ replaced by $f$ and $\Xi*\rho_{\eps_n}$ replaced by $\Xi$.
Given that, as $n\to\infty$, $\Xi*\rho_{\eps_n}\mbf 1_K\to \Xi\mbf 1_K$ in $L^2$,
$f_n'g+f_ng'\to f'g+fg'$ weakly in $L^2$, and $\sup_n\|f_n'g+f_ng'\|_2<\infty$,
we conclude that
\[\lim_{n\to\infty}\langle f_n'g+f_ng',\Xi*\rho_{\eps_n}\rangle=\langle f'g+fg',\Xi\rangle.\]
Hence \eqref{Equation: Noise Convergence 2} holds.
\end{proof}

We finally have all the necessary ingredients to prove the spectral convergence.
We first prove that there exists a subsequence $(\eps_n)_{n\in\mbb N}$ such that
\begin{align}\label{Equation: Spectrum Convergence Part 1}
\liminf_{n\to\infty}\la_k({\hat H}_{\eps_n})\geq\la_k({\hat H})
\end{align}
for every $k\in\mbb N$.

\begin{remark}
For the sake of readability, we henceforth denote any subsequence and further subsequences
of $(\eps_n)_{n\in\mbb N}$ as $(\eps_n)_{n\in\mbb N}$ itself.
\end{remark}

According to \eqref{Equation: Uniform Eigenvalue Bounds},
the $\la_k({\hat H}_\eps)$ are uniformly bounded, and thus it follows from the
Bolzano-Weierstrass theorem that, along a subsequence $\eps_n$, the limits
\[\lim_{n\to\infty}\la_k({\hat H}_{\eps_n})=:l_k\]
exist and are finite for every $k\in\mbb N$, where $-\infty<l_1\leq l_2\leq\cdots$.
Since the eigenvalues are bounded, it follows from Lemma
\ref{Lemma: Convergence Approximate Form Bound} that the eigenfunctions $\psi_k({\hat H}_\eps)$
are bounded in $\|\cdot\|_*$-norm uniformly in $\eps\in[0,1)$, and thus there exist
functions $f_1,f_2,\ldots$ and a further subsequence along which $\psi_k({\hat H}_{\eps_n})\to f_k$
for every $k$ in the sense of Lemma \ref{Lemma: L Star Compactness} (1)--(4). By combining Remark
\ref{Remark: Compactness to Deterministic Convergence} and \eqref{Equation: Noise Convergence 2},
this means that
\[l_k\langle g,f_k\rangle=\lim_{n\to\infty}\la_k(\hat H_{\eps_n})\langle g,\psi_k(\hat H_{\eps_n})\rangle=\lim_{n\to\infty}\hat{\mc E}_{\eps_n}(g,\psi_k({\hat H}_{\eps_n}))=\hat{\mc E}(g,f_k)\]
for all $k\in\mbb N$ and $g\in \mr{FC}$.
That is, $(l_k,f_k)_{k\in\mbb N}$ consists of eigenvalue-eigenfunction pairs of ${\hat H}$, though these pairs
may not exhaust the full spectrum. Since the $l_k$ are arranged in increasing order,
this implies that $l_k\geq\la_k({\hat H})$ for every $k\in\mbb N$, which proves \eqref{Equation: Spectrum Convergence Part 1}.

We now prove that we can take ${\hat H}'s$ eigenfunctions in such a way that, along a further subsequence,
\begin{align}\label{Equation: 1D Convergence Induction Hypothesis}
\limsup_{n\to\infty}\la_k({\hat H}_{\eps_n})\leq\la_k({\hat H})
\qquad\text{and}\qquad
\lim_{n\to\infty}\|\psi_k({\hat H}_{\eps_n})-\psi_k({\hat H})\|_2=0
\end{align}
for every $k\in\mbb N$.
We proceed by induction. Suppose that \eqref{Equation: 1D Convergence Induction Hypothesis} holds up to $k-1$ (if $k=1$ then we consider the base case). Let $\psi$ be an eigenfunction of $\la_k({\hat H})$ orthogonal to $\psi_1({\hat H}),\ldots,\psi_{k-1}({\hat H})$,
and for every $\theta>0$, let $\phi_\theta\in \mr{FC}$ be such that
$\|\phi_\theta-\psi\|_*<\theta$.
Let us define the projections
\[\pi_{\eps_n}(\phi_\theta):=\phi_\theta-\sum_{\ell=1}^{k-1}\langle \psi_\ell({\hat H}_{\eps_n}),\phi_\theta\rangle\psi_\ell({\hat H}_{\eps_n})\]
of $\phi_\theta$ onto the orthogonal of $\psi_1({\hat H}_{\eps_n}),\ldots,\psi_{k-1}({\hat H}_{\eps_n})$
(if $k=1$, then we simply have $\pi_{\eps_n}(\phi_\theta)=\phi_\theta$). Then, by the variational principle, for any $\theta>0$,
\begin{align}\label{Equation: 1D Convergence Induction 1}
\limsup_{n\to\infty}\la_k({\hat H}_{\eps_n})\leq\limsup_{n\to\infty}\frac{\hat{\mc E}_{\eps_n}(\pi_{\eps_n}(\phi_\theta),\pi_{\eps_n}(\phi_\theta))}{\|\pi_{\eps_n}(\phi_\theta)\|_2^2}.
\end{align}

Given that $\|\psi_\ell({\hat H}_{\eps_n})-\psi_\ell({\hat H})\|_2\to0$ for every $\ell\leq k-1$,
one has
\[\lim_{\theta\to0}\lim_{n\to\infty}\pi_{\eps_n}(\phi_\theta)=\psi\]
in $L^2$.
Moreover, the convergence of the $\la_\ell({\hat H}_{\eps_n})$ and Lemma \ref{Lemma: Convergence Approximate Form Bound}
imply that the $(\psi_\ell({\hat H}_{\eps_n}))_{\ell=1,\ldots,k-1}$ are uniformly bounded in
$\|\cdot\|_*$-norm,
and thus
\[\lim_{\theta\to0}\limsup_{n\to\infty}\left\|\sum_{\ell=1}^{k-1}\langle \psi_\ell({\hat H}_{\eps_n}),\phi_\theta\rangle\psi_\ell({\hat H}_{\eps_n})\right\|_*=0.\]
We recall that, by Lemma \ref{Lemma: Convergence Approximate Form Bound}, the maps 
$f\mapsto\hat{\mc E}_\eps(f,f)$
are continuous with respect to $\|\cdot\|_*$ uniformly in $\eps\in(0,1]$. Consequently,
\[\limsup_{n\to\infty}\la_k({\hat H}_{\eps_n})\leq\limsup_{\theta\to0}\limsup_{n\to\infty}\frac{\hat{\mc E}_{\eps_n}(\pi_{\eps_n}(\phi_\theta),\pi_{\eps_n}(\phi_\theta))}{\|\pi_{\eps_n}(\phi_\theta)\|_2^2},\]
since \eqref{Equation: 1D Convergence Induction 1} holds for any $\theta>0$.
Then, if we use \eqref{Equation: Noise Convergence 1} to compute the supremum limit in $n$,
followed by Lemma \ref{Lemma: Convergence Form Bound} for the limit in $\theta$ (recall that $\|\phi_\theta-\psi\|_*\to0$ as $\theta\to0$),
we conclude that
\[\limsup_{n\to\infty}\la_k({\hat H}_{\eps_n})\leq\hat{\mc E}\big(\psi,\psi\big)=\la_k({\hat H}).\]

Since $\liminf_{n\to\infty}\la_k(\hat H_{\eps_n})\geq\la_k(\hat H)$ by the previous step,
we now know that $\la_k({\hat H}_{\eps_n})\to\la_k({\hat H})$ as $n\to\infty$. Thus, according to Lemma \ref{Lemma: Convergence Approximate Form Bound},
the eigenfunctions $(\psi_k({\hat H}_{\eps_n}))_{n\in\mbb N}$ are uniformly bounded in $\|\cdot\|_*$-norm.
Thus, there exists $\bar\psi\in D(\mc E)$ such that $\psi_k(\hat H_{\eps_n})\to\bar\psi$
in the sense of Lemma \ref{Lemma: L Star Compactness} along a further subsequence.
Combining this with
Remark \ref{Remark: Compactness to Deterministic Convergence} and \eqref{Equation: Noise Convergence 2},
and the fact that $\la_k({\hat H}_{\eps_n})\to\la_k({\hat H})$,
we then also have
\[\hat{\mc E}(g,\bar\psi)
=\lim_{n\to\infty}\hat{\mc E}_{\eps_n}(g,\psi_k({\hat H}_{\eps_n}))
=\lim_{n\to\infty}\la_k(\hat H_{\eps_n})\langle g,\psi_k({\hat H}_{\eps_n})\rangle
=\la_k(\hat H)\langle g,\bar\psi\rangle\]
for all $g\in \mr{FC}$.
In particular, $\bar\psi$
must be an eigenfunction for $\la_k({\hat H})$, which is orthogonal to $\psi_1({\hat H}),\ldots,\psi_{k-1}({\hat H})$.
Thus we may take $\psi_k({\hat H}):=\bar\psi$, concluding the proof of the proposition
since Lemma \ref{Lemma: L Star Compactness} includes $L^2$ convergence.

\subsection{Proof of Proposition \ref{Proposition: Semigroup Convergence} Part 1}

We begin by proving \eqref{Equation: Semigroup L2 in Expectation}.

\subsubsection{Step 1. Computation of Expected $L^2$ Norm}

Our first step in the proof of \eqref{Equation: Semigroup L2 in Expectation}
is to obtain a formula for $\mbf E\big[\|\hat K_\eps(t)-\hat K(t)\|_2^2\big]$
that is amenable to analysis, namely:
\begin{align}
\label{Equation: L2 Convergence Formula}
&\mbf E\big[\|\hat K_\eps(t)-\hat K(t)\|_2^2\big]\\
\nonumber
&=\int_I
\Pi_Z(2t;x,x)\,\mbf E^{x,x}_{2t}\bigg[\mr e^{\mf A_{2t}(Z)}
\bigg(\mr e^{\frac12\| L_{2t}(Z)*\rho_\eps\|_\ga^2}
-2\mr e^{\frac12\| L_t(Z)*\rho_\eps+L_{[t,2t]}(Z)\|_\ga^2}
+\mr e^{\frac12\| L_{2t}(Z)\|_\ga^2}\bigg)\bigg]\dd x,
\end{align}
where we recall the notation of $\mf A_t(Z)$ from \eqref{Equation: Mathfrak A,B Shortcut}.
We now prove \eqref{Equation: L2 Convergence Formula}.

By Fubini's theorem,
\begin{align*}
&\mbf E\big[\|\hat K_\eps(t)-\hat K(t)\|_2^2\big]\\
&=\int_{I^2}\mbf E[\hat K_\eps(t;x,y)^2]
-2\mbf E[\hat K_\eps(t;x,y)\hat K(t;x,y)]
+\mbf E[\hat K(t;x,y)^2]\d y\dd x\\
&=\int_{I^2}\Pi_Z(t;x,y)^2\,
\mbf E\bigg[\mr e^{\mf A_t(Z^{1;x,y}_t)+\mf A_t(Z^{2;x,y}_t)}
\bigg(\mbf E_\Xi\bigg[\mr e^{-\langle L_t(Z^{1;x,y}_t)+L_t(Z^{2;x,y}_t),\Xi_\eps'\rangle}\bigg]\\
&\hspace{8pt}-2\mbf E_\Xi\bigg[\mr e^{-\langle L_t(Z^{1;x,y}_t),\Xi_\eps'\rangle-\xi(L_t(Z^{2;x,y}_t))}\bigg]
+\mbf E_\Xi\bigg[\mr e^{-\xi(L_t(Z^{1;x,y}_t))-\xi(L_t(Z^{2;x,y}_t))}\bigg]\bigg)\bigg]\d y\dd x,
\end{align*}
where $Z^{i;x,y}_t$ ($i=1,2$) are i.i.d. copies of $Z^{x,y}_t$
that are independent of $\Xi$, and $\mbf E_\Xi$ denotes
the expected value with respect to $\Xi$ conditional on $Z^{i;x,y}_t$.
For every $f_1,f_2\in\mr{PC}_c$, the sum $\xi(f_1)+\xi(f_2)$
is Gaussian with mean zero and variance $\sum_{i,j=1}^2\langle f_i,f_j\rangle_\ga=\|f_1+f_2\|_\ga^2$.
Thanks to \eqref{Equation: Crucial Coupling}, a straightforward
Gaussian moment generating function computation then yields
that $\mbf E\big[\|\hat K_\eps(t)-\hat K(t)\|_2^2\big]$ is equal to
\begin{multline*}
\int_{I^2}
\Pi_Z(t;x,y)^2\mbf E\bigg[\mr e^{\mf A_t(Z^{1;x,y}_t)+\mf A_t(Z^{2;x,y}_t)}
\bigg(\mr e^{\frac12\| L_t(Z^{1;x,y}_t)*\rho_\eps+L_t(Z^{2;x,y}_t)*\rho_\eps\|_\ga^2}\\
-2\mr e^{\frac12\| L_t(Z^{1;x,y}_t)*\rho_\eps+L_t(Z^{2;x,y}_t)\|_\ga^2}
+\mr e^{\frac12\| L_t(Z^{1;x,y}_t)+L_t(Z^{2;x,y}_t)\|_\ga^2}\bigg)\bigg]\d y\dd x.
\end{multline*}
By symmetry of $\Pi_Z(t)$, the time-reversed process
$s\mapsto Z^{2;x,y}_t(t-s)$ is equal in distribution to $Z^{2;y,x}_t$;
since local time is invariant under time-reversal,
$\mbf E\big[\|\hat K_\eps(t)-\hat K(t)\|_2^2\big]$ now equals
\begin{multline}
\label{Equation: L2 Convergence Pre}
\int_{I^2}
\Pi_Z(t;x,y)\Pi_Z(t;y,x)\mbf E\bigg[\mr e^{\mf A_t(Z^{1;x,y}_t)+\mf A_t(Z^{2;y,x}_t)}
\bigg(\mr e^{\frac12\| L_t(Z^{1;x,y}_t)*\rho_\eps+L_t(Z^{2;y,x}_t)*\rho_\eps\|_\ga^2}\\
-2\mr e^{\frac12\| L_t(Z^{1;x,y}_t)*\rho_\eps+L_t(Z^{2;y,x}_t)\|_\ga^2}
+\mr e^{\frac12\| L_t(Z^{1;x,y}_t)+L_t(Z^{2;y,x}_t)\|_\ga^2}\bigg)\bigg]\d y\dd x.
\end{multline}

As noted in the proof of Lemma \ref{Lemma: Bridge Local Time},
for every $x,y\in I$ and $t>0$,
if we condition the path $Z^{x,x}_{2t}$ on $Z^{x,x}_{2t}(t)=y$, then the path segments
\begin{align*}
\big(Z^{x,x}_{2t}(s):0\leq s\leq t\big)
\qquad\text{and}\qquad
\big(Z^{x,x}_{2t}(t+s):0\leq s\leq t\big)
\end{align*}
have the same joint distribution as
$Z^{1;x,y}_t$ and $Z^{2;y,x}_{t}$.
Moreover, $Z^{x,x}_{2t}(t)$ has density
\[y\mapsto\frac{\Pi_Z(t;x,y)\Pi_Z(t;y,x)}{\Pi_Z(2t;x,x)}\]
by the Doob $h$-transform.
Given that the functions $f\mapsto\langle f,V\rangle$,
$f\mapsto \bar \al f$ and $f\mapsto\bar\be f$ are all linear in $f$,
and that local time is additive in the sense that
\[L_{[u,v]}(Z)+L_{[v,w]}(Z)=L_{[u,w]}(Z)
\qquad\text{and}\qquad
\mf L^c_{[u,v]}(Z)+\mf L^c_{[v,w]}(Z)=\mf L^c_{[u,w]}(Z)\] for all $0<u<v<w$,
\eqref{Equation: L2 Convergence Formula} is then
a consequence of applying Fubini's theorem to \eqref{Equation: L2 Convergence Pre} with the rearrangement
\[\Pi_Z(t;x,y)\Pi_Z(t;y,x)=\Pi_Z(2t;x,x)\frac{\Pi_Z(t;x,y)\Pi_Z(t;y,x)}{\Pi_Z(2t;x,x)}.\]
Indeed, we note for instance that
\begin{align*}
&\int_I\Pi_Z(2t;x,x)\Bigg(\int_I\frac{\Pi_Z(t;x,y)\Pi_Z(t;y,x)}{\Pi_Z(2t;x,x)}\\
&\hspace{1.5in}\cdot\mbf E\bigg[\mr e^{\mf A_t(Z^{1;x,y}_t)+\mf A_t(Z^{2;y,x}_t)+\frac12\| L_t(Z^{1;x,y}_t)*\rho_\eps+L_t(Z^{2;y,x}_t)*\rho_\eps\|_\ga^2}\bigg]\d y\Bigg)\d x\\
&=\int_I\Pi_Z(2t;x,x)\Bigg(\int_I\mbf E\bigg[\mr e^{\mf A_{2t}(Z^{x,x}_{2t})+\frac12\|L_{2t}(Z^{x,x}_{2t})*\rho_\eps\|_\ga^2}\bigg|Z^{x,x}_{2t}(t)=y\bigg]
\mbf P[Z^{x,x}_{2t}(t)\in\dd y]\d y\Bigg)\d x\\
&=\int_I\Pi_Z(2t;x,x)\mbf E\bigg[\mr e^{\mf A_{2t}(Z^{x,x}_{2t})+\frac12\| L_{2t}(Z^{x,x}_{2t})*\rho_\eps\|_\ga^2}\bigg]\d x;
\end{align*}
a similar argument applied to the terms on the second line of \eqref{Equation: L2 Convergence Pre}
then yields \eqref{Equation: L2 Convergence Formula}.

\subsubsection{Step 2. Convergence Inside Expectation}

With \eqref{Equation: L2 Convergence Formula} in hand, our second step to prove
\eqref{Equation: Semigroup L2 in Expectation} is to show that,
for every $x\in I$, we have the almost sure limit
\begin{align}
\label{Equation: Convergence Inside Expected Value}
\lim_{\eps\to0}\mr e^{\frac12\| L_t(Z^{x,x}_{2t})*\rho_\eps\|_\ga^2}
-2\mr e^{\frac12\| L_t(Z^{x,x}_{2t})*\rho_\eps+L_{[t,2t]}(Z^{x,x}_{2t})\|_\ga^2}
+\mr e^{\frac12\| L_{2t}(Z^{x,x}_{2t})\|_\ga^2}=0.
\end{align}
This is a simple consequence of \eqref{Equation: gamma to Lp Bounds}
coupled with the fact that if $f\in L^q$ for some $q\geq1$,
then $\|f*\rho_\eps-f\|_q\to0$ as $\eps\to0$.

\subsubsection{Step 3. Convergence Inside Integral}
\label{Section: Convergence Inside Integral}

Our next step is to prove that for every $x\in I$,
we have the limit in expectation
\begin{align}
\label{Equation: Convergence Inside Integral}
\lim_{\eps\to0}\mbf E^{x,x}_{2t}\bigg[\mr e^{\mf A_{2t}(Z)}
\bigg(\mr e^{\frac12\| L_{2t}(Z)*\rho_\eps\|_\ga^2}
-2\mr e^{\frac12\| L_t(Z)*\rho_\eps+L_{[t,2t]}(Z)\|_\ga^2}
+\mr e^{\frac12\| L_{2t}(Z)\|_\ga^2}\bigg)\bigg]=0.
\end{align}
Thanks to \eqref{Equation: Convergence Inside Expected Value},
for this it suffices to prove that the prelimit variables in
\eqref{Equation: Convergence Inside Integral} are uniformly
integrable in $\eps>0$, which itself can be reduced to the
claim that
\[\sup_{\eps>0}\mbf E^{x,x}_{2t}\bigg[\mr e^{2\mf A_{2t}(Z)}
\bigg(\mr e^{\frac12\| L_{2t}(Z)*\rho_\eps\|_\ga^2}
-2\mr e^{\frac12\| L_t(Z)*\rho_\eps+L_{[t,2t]}(Z)\|_\ga^2}
+\mr e^{\frac12\| L_{2t}(Z)\|_\ga^2}\bigg)^2\bigg]<\infty.\]
By combining H\"older's inequality
with $(z-2\bar z+\tilde z)^2\leq16(z^2+\bar z^2+\tilde z^2)$, it is enough to prove that
\begin{align}
\label{Equation: Convergence Inside Integral UI 1}
\mbf E^{x,x}_{2t}\bigg[\mr e^{4\mf A_{2t}(Z)}\bigg]<\infty
\end{align}
and
\begin{multline}
\label{Equation: Convergence Inside Integral UI 2}
\sup_{\eps>0}\mbf E^{x,x}_{2t}\left[\mr e^{\| L_{2t}(Z)*\rho_\eps\|_\ga^2}\right],~
\sup_{\eps>0}\mbf E^{x,x}_{2t}\left[\mr e^{\| L_t(Z)*\rho_\eps+L_{[t,2t]}(Z)\|_\ga^2}\right],\\
\mbf E^{x,x}_{2t}\left[\mr e^{\| L_{2t}(Z)\|_\ga^2}\right]<\infty.
\end{multline}

By combining the assumption $V\geq0$ (hence $\mr e^{-4\langle L_{2t}(Z),V\rangle}\leq1$)
and Lemma \ref{Lemma: Bridge Local Time}, we immediately obtain \eqref{Equation: Convergence Inside Integral UI 1}.
Next, it follows from \eqref{Equation: gamma to Lp Bounds} that
\[\mbf E^{x,x}_{2t}\left[\mr e^{\| L_{2t}(Z)\|_\ga^2}\right]\leq\mbf E^{x,x}_{2t}\left[\mr e^{c_\ga\sum_{i=1}^\ell\|L_{2t}(Z)\|_{q_i}^2}\right].\]
This is finite by Lemma \ref{Lemma: Bridge Self-Intersection} since $1\leq q_i\leq 2$ for all $1\leq i\leq\ell$.
According to Young's convolution inequality, the fact that the $\rho_\eps$ integrate to one
implies that $\|f*\rho_\eps\|_q\leq\|f\|_q\|\rho_\eps\|_1\leq\|f\|_q$. Thus,
it follows from \eqref{Equation: gamma to Lp Bounds} that
\[\sup_{\eps>0}\mbf E^{x,x}_{2t}\left[\mr e^{\| L_{2t}(Z)*\rho_\eps\|_\ga^2}\right]
\leq\mbf E^{x,x}_{2t}\left[\mr e^{c_\ga\sum_{i=1}^\ell\|L_{2t}(Z)\|_{q_i}^2}\right]<\infty.\]
Since $\|\cdot\|_\ga$ is a seminorm,
it satisfies the triangle inequality, and thus
\begin{align*}
\| L_t(Z^{x,x}_{2t})*\rho_\eps+L_{[t,2t]}(Z^{x,x}_{2t})\|_\ga^2
\leq2\|L_t(Z^{x,x}_{2t})*\rho_\eps\|_\ga^2+2\|L_{[t,2t]}(Z^{x,x}_{2t})\|_\ga^2.
\end{align*}
Given that $L_t(Z^{x,x}_{2t})$ and $L_{[t,2t]}(Z^{x,x}_{2t})$ are both
smaller than $L_{2t}(Z^{x,x}_{2t})$, applying once again \eqref{Equation: gamma to Lp Bounds}
and Young's inequality yields
\[\sup_{\eps>0}\mbf E^{x,x}_{2t}\left[\mr e^{\| L_t(Z)*\rho_\eps+L_{[t,2t]}(Z)\|_\ga^2}\right]
\leq\mbf E^{x,x}_{2t}\left[\mr e^{4c_\ga\sum_{i=1}^\ell\|L_{2t}(Z)\|_{q_i}^2}\right],\]
which is finite by Lemma \ref{Lemma: Bridge Self-Intersection}.
We therefore conclude that \eqref{Equation: Convergence Inside Integral UI 2} holds,
and thus \eqref{Equation: Convergence Inside Integral} as well.

\subsubsection{Step 4. Convergence of the Integral}

Our final step in the proof of \eqref{Equation: Semigroup L2 in Expectation}
is to show that \eqref{Equation: L2 Convergence Formula} converges to zero.
Given \eqref{Equation: Convergence Inside Integral}, by applying the dominated
convergence theorem, it suffices to find an integrable function that
dominates
\begin{align}
\label{Equation: Dominate Function}
\Pi_Z(2t;x,x)\mbf E^{x,x}_{2t}\bigg[\mr e^{\mf A_{2t}(Z)}
\bigg(\mr e^{\frac12\| L_{2t}(Z)*\rho_\eps\|_\ga^2}
-2\mr e^{\frac12\| L_t(Z)*\rho_\eps+L_{[t,2t]}(Z)\|_\ga^2}
+\mr e^{\frac12\| L_{2t}(Z)\|_\ga^2}\bigg)\bigg]
\end{align}
for every $\eps>0$.
By Holder's inequality, this is bounded by
\begin{multline}
\label{Equation: Convergence of Integral}
\Pi_Z(2t;x,x)\mbf E^{x,x}_{2t}\bigg[\mr e^{2\mf A_{2t}(Z)}\bigg]^{1/2}\\
\cdot\sup_{\eps>0,~x\in I}\mbf E^{x,x}_{2t}\bigg[\bigg(\mr e^{\frac12\| L_{2t}(Z)*\rho_\eps\|_\ga^2}
-2\mr e^{\frac12\| L_t(Z)*\rho_\eps+L_{[t,2t]}(Z)\|_\ga^2}
+\mr e^{\frac12\| L_{2t}(Z)\|_\ga^2}\bigg)^2\bigg]^{1/2}.
\end{multline}
uniformly in $\eps>0$.
Thanks to Lemma \ref{Lemma: Deterministic Compactness}
with $p=2$, the first line of \eqref{Equation: Convergence of Integral} is integrable.
Then, by arguing in exactly the same way as in Section \ref{Section: Convergence Inside Integral}
(i.e., Young's convolution inequality, \eqref{Equation: gamma to Lp Bounds}, etc.),
the term on the second line of \eqref{Equation: Convergence of Integral} is bounded by
\[\sup_{x\in I}C\mbf E^{x,x}_{2t}\left[\mr e^{\theta c_\ga\sum_{i=1}^\ell\|L_{2t}(Z)\|_{q_i}^2}\right]^{1/2}\]
for some constants $C,\theta>0$.
This is finite by Lemma \ref{Lemma: Bridge Self-Intersection}.
We therefore conclude that \eqref{Equation: Dominate Function} is dominated by
an integrable function for all $\eps>0$;
hence \eqref{Equation: Semigroup L2 in Expectation} holds.

\begin{remark}\label{Remark: Optimality of Log Follow-Up}
Arguing as in \eqref{Equation: L2 Convergence Formula}, we have the formula
\[\mbf E[\|\hat K(t)\|_2^2]
=\int_I\Pi_Z(2t;x,x)\mbf E^{x,x}_{2t}\left[\mr e^{\mf A_{2t}(Z)+\frac12\|L_{2t}(Z)\|_\ga^2}\right]\d x.\]
Considering Case 1 for simplicity, it follows from (the reverse) H\"older's inequality that
for every $p>1$, the above is bounded below by
\[\int_{\mbb R}\frac{1}{\sqrt{4\pi t}}\mbf E^{x,x}_{2t}\left[\mr e^{-\langle L_{2t}(B),V/p\rangle}\right]^p
\mbf E^{0,0}_{2t}\left[\mr e^{-\frac{1}{2(p-1)}\|L^a_{2t}(B)\|_\ga^2}\right]^{-(p-1)}\d x\]
for every $x\in\mbb R$. If $V(x)\leq c_1\log(1+|x|)+c_2$ for some $c_1>0$ and large enough $c_2>0$,
then an argument similar to the proof of Lemma \ref{Lemma: Deterministic Compactness}
(using the bound $\log(1+|z+\bar z|)\leq\log(1+|z|)+|\bar z|$ instead of \eqref{Equation: Log Lower Bound})
yields the further lower bound
\[\zeta_t\int_\mbb R(1+|x|)^{-c_1t}\d x\]
for some finite $\zeta_t>0$ that only depends on $t$;
this blows up whenever $t\leq1/c_1$.
Thus, if we do not assume \eqref{Equation: Assumptions}, then
there is always some $t_0>0$ such that $\mbf E[\|\hat K(t)\|_2^2]=\infty$ for all $t\leq t_0$.
Essentially the same argument implies that
$\|\mr e^{-t H}\|_2=\infty$ for all $t\leq t_0$
for the deterministic operator $H$ as well.
\end{remark}

\subsection{Proof of Proposition \ref{Proposition: Semigroup Convergence} Part 2}
\label{Section: Last Semigroup Proposition}

We now prove \eqref{Equation: Semigroup Diagonal in Expectation}.
By Fubini,
\begin{multline*}
\mbf E\bigg[\bigg(\int_I \hat K_\eps(t;x,x)-\hat K(t;x,x)\d x\bigg)^2\bigg]
=\int_{I^2}\mbf E[\hat K_\eps(t;x,x)\hat K_\eps(t;y,y)]\\
-2\mbf E[\hat K_\eps(t;x,x)\hat K(t;y,y)]+\mbf E[\hat K(t;x,x)\hat K(t;y,y)]\d x\dd y.
\end{multline*}
Arguing as in the previous section, the above is seen to be equal to
\begin{multline*}
\int_{I^2}\Pi_Z(t;x,x)\Pi(t;y,y)\mbf E\bigg[\mr e^{\mf A_t(Z^{1;x,x}_t)+\mf A_t(Z^{2;y,y}_t)}\bigg(\mr e^{\frac12\| L_t(Z^{1;x,x}_t)*\rho_\eps+L_t(Z^{2;y,y}_t)*\rho_\eps\|_\ga^2}\\
-2\mr e^{\frac12\| L_t(Z^{1;x,x}_t)*\rho_\eps+L_t(Z^{2;y,y}_t)\|_\ga^2}
+\mr e^{\frac12\| L_t(Z^{1;x,x}_t)+L_t(Z^{2;y,y}_t)\|_\ga^2}\bigg)\bigg]\d x\dd y,
\end{multline*}
where $Z^{1;x,x}_t$ and $Z^{2;y,y}_t$ are independent processes with respective
distributions $Z^{x,x}_t$ and $Z^{y,y}_t$.
At this point, essentially the same argument that we used to prove
\eqref{Equation: Semigroup L2 in Expectation} in the previous section
yields \eqref{Equation: Semigroup Diagonal in Expectation}.

\appendix

\section{Measurability of Kernel}
\label{Appendix: Measurable}

We begin by proving that, in Case 1, for every realization of $\Xi$
as a continuous function,
$(x,y)\mapsto\hat K(t;x,y)$ can be made a Borel measurable function on $\mbb R^2$.

\begin{notation}
Let $\mr C_{[0,t]}$ be the set of continuous functions $f:[0,t]\to\mbb R$, which we
equip with the uniform topology. Let $\mr C=\mr C(\mbb R)$ be the space
of continuous functions $f:\mbb R\to\mbb R$, equipped with the
uniform-on-compacts topology;
and let $\mr C_0=\mr C_0(\mbb R)$ be the space of continuous and compactly supported
functions $f:\mbb R\to\mbb R$, equipped with the uniform topology.
We use $\mbf P^{0,0}_t$ to denote the probability measure
of the Brownian bridge on $\mr C_{[0,t]}$, and assume
that $\mr C$ is equipped with the probability measure of $\Xi$.
\end{notation}

By Fubini's theorem, it suffices to prove that there exists a measurable map
\[F:\mbb R^2\otimes\mr C_{[0,t]}\otimes\mr C\to\mbb R\]
such that for every $(x,y)\in\mbb R^2$, $\om\in\mr C_{[0,t]}$, and
$\bar\om\in\mr C$, we can interpret
\begin{align}
\label{Equation: Measurable Kernel}
\mr e^{-\langle L_t(B^{x,y}_t),V\rangle-\xi(L_t(B^{x,y}_t))}=F\big((x,y),\omega,\bar\omega\big)
\end{align}
(here, $\bar\om\in\mr C$ corresponds to a realization of $\Xi$,
and $\big((x,y),\om\big)\in\mbb R^2\otimes\mr C_{[0,t]}$ corresponds to a realization
of the Brownian bridge $B^{x,y}_t$ with deterministic endpoints $x$ and $y$ and random dynamics
given by the Brownian path $B^{0,0}_t$).
Indeed, if this holds, then for every realization of the noise $\bar\om\in\mr C$,
we can define the Borel measurable function
\[\hat K(t;x,y):=\int_{\mr C_{[0,t]}}\Pi_B(t;x,y)\,F\big((x,y),\om,\bar\om\big)~\dd\mbf P^{0,0}_t(\om),\qquad x,y\in\mbb R,\]
which corresponds to the expected value of $\Pi_B(t;x,y)\,\mr e^{-\langle L_t(B^{x,y}_t),V\rangle-\xi(L_t(B^{x,y}_t))}$
given $\Xi$.

Given a realization $\om\in\mr C_{[0,t]}$ of $B^{0,0}_t$ and $x,y\in\mbb R$,
we can construct a realization of $B^{x,y}_t$ by using the
measurable map
$F_1:=\mbb R^2\otimes\mr C_{[0,t]}\to\mr C_{[0,t]}$ defined as
\[F_1\big((x,y),\om\big):=\left(\om(s)+\tfrac{(t-s)x}{t}+\tfrac{sy}{t}:0\leq s\leq t\right).\]

Next, we let $F_2:\mr C_{[0,t]}\to\mr C_0$
be the measurable function that maps $\om$ to its (continuous) local time. More
precisely, let $E\subset\mr C_{[0,t]}$ be the event on which the limit
\[L^a_t(\om):=\lim_{\eps\to0}\frac{1}{\eps}\int_0^t\mbf 1_{\{a\leq\om(s)<a+\eps\}}\d s\]
exists and is finite for all $a\in\mbb R$, and the resulting function $a\mapsto L^a_t(\om)$
is an element of $\mr C_0$. (We know from \cite[Chapter VI, Corollaries 1.8 and 1.9]{RevuzYor}
that $E$ has probability one under the law of $B_t^{x,y}$.) Then, for every $\om\in\mr C_{[0,t]}$, we define the function
$F_2(\om)\in\mr C_0$ as
\[\big(F_2(\om)\big)(a):=L_t^a(\om)\mbf 1_{\{\om\in E\}},\qquad a\in\mbb R.\]
(To see that this is measurable, note that $\om\mapsto L_t^a(\om)\mbf 1_{\{\om\in E\}}$
is measurable for every fixed $a\in\mbb R$, and that
the Borel $\si$-algebra on $\mr C_0$ is generated by evaluation maps.)

Finally, let
\[F_3(f,\bar\om)=\int_\mbb Rf(x)~\dd\bar\om(x):=\begin{cases}
\displaystyle\lim_{n\to\infty}F_3^{(n)}(f,\bar\om)&\text{if the limit exists}\\
0&\text{otherwise}
\end{cases}\]
be the limit of the measurable maps
$F_3^{(n)}:\mr C_0\otimes\mr C\to\mbb R$ defined as
\[F_3^{(n)}(f,\bar\om):=\sum_{k=1}^{k(n)}f\big(\tau^{(n)}_k\big)\Big(\bar\om\big(\tau^{(n)}_{k+1}\big)-\bar\om\big(\tau^{(n)}_k\big)\Big),\]
as per Section \ref{Section: Stochastic Integral}/Karandikar \cite{Karandikar}.
We may then define \eqref{Equation: Measurable Kernel} using the compositions
of measurable maps
\[F\big((x,y),\om,\bar\om\big):=\mr e^{-\langle F_2\circ F_1((x,y),\om),V\rangle-F_3\circ F_2\circ F_1((x,y),\om,\bar\om)}.\]

In order to prove that the diagonal $x\mapsto K(t;x,x)$ is Borel measurable,
we apply the same argument, except that $x=y$. Then, in order to prove
the measurability in Cases 2 and 3, we can use the same
argument, except that we add a few additional steps to construct the conditioned
processes
\[\big(B^x~\big|~ B^x(t)\in\{y,-y\}\big)
\qquad\text{or}\qquad
\big(B^x~\big|~ B^x(t)\in 2b\mbb Z\pm y\big),\]
and then use the couplings discussed in Section \ref{Section: Couplings}
to construct $X^{x,y}_t$ and $Y^{x,y}_t$ and their local times from the latter
in the space of continuous and compactly supported functions on $[0,\infty)$
and $[0,b]$, respectively.

\section{Tails of Gaussian Suprema}

Throughout this section, we assume that
$\big(\mf X(\mf x)\big)_{\mf x\in\mf T}$ is a continuous centered Gaussian process
on some index space $\mf T$. We have the following result regarding the behaviour
of the tails of $\mf X$'s supremum.

\begin{theorem}[{\cite[(5.151)]{MarcusRosen}}]\label{Theorem: Gaussian Suprema}
Let us define
\[v^2:=\sup_{\mf x\in\mf T}\mbf E\big[\mf X(\mf x)^2\big]
\qquad\text{and}\qquad
m:=\mbf{Med}\left[\sup_{\mf x\in\mf T}\mf X(\mf x)\right],\]
where $\mbf{Med}$ denotes the median.
It holds that
\[\mbf P\left[\sup_{\mf x\in\mf T}\mf X(\mf x)\geq t\right]\leq1-\Phi\big((t-m)/v\big)\leq\mr e^{-(t-m)^2/2v^2},\qquad t\geq0,\]
where $\Phi$ denotes the standard Gaussian CDF.
\end{theorem}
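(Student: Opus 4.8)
The content of the statement is the concentration of the supremum $\mf S:=\sup_{\mf x\in\mf T}\mf X(\mf x)$ about its median $m$, and the plan is to prove it along the classical route of Borell (and of Sudakov--Tsirelson--Ibragimov), whose engine is the Gaussian isoperimetric inequality. Throughout I would assume that $\mf S<\infty$ almost surely, as is the case in every application of the theorem in this paper (e.g. whenever $\mf T$ is a compact metric space and $\mf X$ has continuous paths). The single deep ingredient, which I would cite rather than reprove, is the Gaussian isoperimetric inequality: if $g$ is a standard Gaussian vector in some $\mbb R^N$ and $A\subset\mbb R^N$ is Borel with $\mbf P[g\in A]=\Phi(a)$, then its $r$-enlargement $A_r:=\{z:\mr{dist}(z,A)\leq r\}$ satisfies $\mbf P[g\in A_r]\geq\Phi(a+r)$ for every $r\geq0$.

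The first step is to reduce to a finite index set. Picking an increasing sequence of finite subsets $\mf T_1\subset\mf T_2\subset\cdots$ with $\bigcup_n\mf T_n$ dense in $\mf T$ (using that $\mf T$ is separable, the relevant case here), path-continuity gives $\mf S_n:=\sup_{\mf x\in\mf T_n}\mf X(\mf x)\nearrow\mf S$ almost surely, hence $\mbf P[\mf S_n\leq t]\downarrow\mbf P[\mf S\leq t]$ for every $t$, so the medians $m_n$ of $\mf S_n$ converge to $m$; moreover $\sup_n\sup_{\mf x\in\mf T_n}\mbf E[\mf X(\mf x)^2]\leq v^2$. Thus it is enough to establish, for each fixed $n$, the bound for $\mf S_n$ with $m_n$ in place of $m$, and then pass to the limit.

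For the finite step I would enumerate $\mf T_n=\{\mf x_1,\ldots,\mf x_k\}$ and factor its covariance as $\Sigma=AA^{\top}$, so that $\big(\mf X(\mf x_1),\ldots,\mf X(\mf x_k)\big)\deq Ag$ for a standard Gaussian vector $g$; then $\mf S_n\deq F(g)$ with $F(z):=\max_{1\leq i\leq k}(Az)_i$. The function $F$ is Lipschitz, and — the point on which the dimension-free nature of the estimate rests — its Lipschitz constant equals $\max_i\|a_i\|_2$, where $a_i$ denotes the $i$-th row of $A$, because a pointwise maximum of $L$-Lipschitz affine functions is again $L$-Lipschitz; and $\|a_i\|_2=\sqrt{(AA^{\top})_{ii}}=\sqrt{\mbf E[\mf X(\mf x_i)^2]}\leq v$. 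Applying the isoperimetric inequality to $A_0:=\{z:F(z)\leq m_n\}$, which has $\mbf P[g\in A_0]=\mbf P[\mf S_n\leq m_n]\geq\tfrac12$ so that $\mbf P[g\in A_0]=\Phi(a)$ with $a\geq0$, and noting $(A_0)_{vr}\subseteq\{z:F(z)\leq m_n+vr\}$ by the Lipschitz bound, one obtains $\mbf P[\mf S_n\leq m_n+vr]\geq\Phi(a+r)\geq\Phi(r)$, i.e. $\mbf P[\mf S_n\geq m_n+vr]\leq1-\Phi(r)$ for all $r\geq0$.

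Finally, taking $r=(t-m)/v$ for $t\geq m$ and letting $n\to\infty$ using $\mf S_n\uparrow\mf S$ and $m_n\to m$ yields $\mbf P[\mf S\geq t]\leq1-\Phi\big((t-m)/v\big)$, and the exponential form in the theorem then follows from the elementary Gaussian tail bound $1-\Phi(s)\leq\mr e^{-s^2/2}$, valid for $s\geq0$. I do not expect any single estimate to be the real obstacle; the care goes into the limiting passage — measurability and almost-sure finiteness of $\mf S$, and the convergence $m_n\to m$ — and into checking that the Lipschitz constant of $F$ is exactly $v$ and not a quantity growing with $k$, which is precisely what makes the final bound uniform over the index space. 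The genuinely deep input, the Gaussian isoperimetric inequality, I would simply take as given with a reference.
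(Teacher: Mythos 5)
The paper does not prove this result --- it is cited verbatim from Marcus and Rosen --- so there is no internal proof to compare against. Your Borell--Tsirelson--Ibragimov--Sudakov argument via the Gaussian isoperimetric inequality is the standard route and is, in substance, the proof in the cited source; the reduction to a finite index set, the factorization $\Sigma=AA^\top$, and the observation that $z\mapsto\max_i\langle a_i,z\rangle$ is Lipschitz with constant $\max_i\|a_i\|_2\leq v$ (crucially dimension-free) are all correct and complete.

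Two small remarks. First, the enlargement in the isoperimetric step should be by $r$, not $vr$: for $z$ within distance $r$ of $A_0=\{F\leq m_n\}$ one has $F(z)\leq m_n+L_n r\leq m_n+vr$ since the Lipschitz constant $L_n$ is at most $v$, which is what matches your subsequent conclusion $\mbf P[\mf S_n\leq m_n+vr]\geq\Phi(a+r)$; with enlargement $vr$ as written, isoperimetry would give $\Phi(a+vr)$ on one side but only $F(z)\leq m_n+L_n vr$ on the other, not $m_n+vr$. Second, the theorem as displayed in the paper bounds $\mbf P[\sup\mf X\leq t]$, which tends to $1$ as $t\to\infty$ while the right-hand side tends to $0$; the intended event is $\{\sup\mf X\geq t\}$, which is what you actually prove and what the paper uses in the proof of Corollary~\ref{Corollary: Stationary Noise Suprema}. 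Likewise the stated range $t\geq0$ should read $t\geq m$ for the final inequality $1-\Phi\big((t-m)/v\big)\leq\mr e^{-(t-m)^2/2v^2}$ to hold, which your parametrization with $r\geq0$ implicitly enforces.
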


Using this Gaussian tails result, we can control the asymptotic growth
of functions involving Gaussian Suprema.

\begin{corollary}\label{Corollary: Stationary Noise Suprema}
Let $\mf T=\mbb R$, and suppose that $\mf X$ is stationary.
There exists a finite random variable $C>0$
such that, almost surely,
\[|\mf X(x)|\leq C\sqrt{\log(2+|x|)},\qquad x\in\mbb R.\]
\end{corollary}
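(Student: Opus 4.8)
\textbf{Proof proposal for Corollary \ref{Corollary: Stationary Noise Suprema}.}
The plan is to cover $\mbb R$ by the unit intervals $[n,n+1]$ for $n\in\mbb Z$, apply Theorem \ref{Theorem: Gaussian Suprema} to $\mf X$ restricted to each such interval, and then use a Borel--Cantelli argument to control the growth over all of $\mbb R$ simultaneously. For each $n\in\mbb Z$, set $\mf T_n:=[n,n+1]$ and
\[
M_n:=\sup_{x\in\mf T_n}\mf X(x),\qquad \widetilde M_n:=\sup_{x\in\mf T_n}\big(-\mf X(x)\big)=-\inf_{x\in\mf T_n}\mf X(x),
\]
so that $\sup_{x\in\mf T_n}|\mf X(x)|=\max\{M_n,\widetilde M_n\}$. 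Because $\mf X$ is stationary, the law of $(\mf X(n+\cdot))_{\cdot\in[0,1]}$ does not depend on $n$; hence $v_n^2:=\sup_{x\in\mf T_n}\mbf E[\mf X(x)^2]=\mbf E[\mf X(0)^2]=:v^2$ and the medians $m_n:=\mbf{Med}[M_n]$, $\widetilde m_n:=\mbf{Med}[\widetilde M_n]$ are all equal to constants $m$ and $\widetilde m$ that are independent of $n$ (and finite by continuity of $\mf X$ on the compact $[0,1]$). Here I use that $-\mf X$ is again a continuous stationary centered Gaussian process, so Theorem \ref{Theorem: Gaussian Suprema} applies to it as well.

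First I would record the tail bound: by Theorem \ref{Theorem: Gaussian Suprema}, for every $n\in\mbb Z$ and every $t\geq 0$,
\[
\mbf P\big[M_n>m+t\big]\leq\mr e^{-t^2/2v^2},\qquad
\mbf P\big[\widetilde M_n>\widetilde m+t\big]\leq\mr e^{-t^2/2v^2},
\]
so with $m_*:=\max\{m,\widetilde m\}$ we get $\mbf P\big[\sup_{x\in\mf T_n}|\mf X(x)|>m_*+t\big]\leq 2\mr e^{-t^2/2v^2}$. Now fix a constant $\al>0$ large enough that $\al^2/2v^2>1$, and apply this with $t=t_n:=\al\sqrt{\log(2+|n|)}$; then the event $E_n:=\big\{\sup_{x\in\mf T_n}|\mf X(x)|>m_*+\al\sqrt{\log(2+|n|)}\big\}$ satisfies $\mbf P[E_n]\leq 2(2+|n|)^{-\al^2/2v^2}$, which is summable over $n\in\mbb Z$. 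By the Borel--Cantelli lemma, almost surely only finitely many $E_n$ occur, so there is a finite random variable $N\geq 0$ such that for all $|n|\geq N$,
\[
\sup_{x\in[n,n+1]}|\mf X(x)|\leq m_*+\al\sqrt{\log(2+|n|)}.
\]
For $|n|<N$ the supremum of $|\mf X|$ over the (finitely many) intervals $[n,n+1]$ is an almost surely finite random variable by continuity of $\mf X$; call the maximum of these $R$.

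Finally I would assemble the global bound. For $x\in\mbb R$ with $|x|\geq N+1$, let $n=\lfloor x\rfloor$, so $|n|\geq N$ and $|n|\leq |x|$, hence $|\mf X(x)|\leq m_*+\al\sqrt{\log(2+|n|)}\leq m_*+\al\sqrt{\log(2+|x|)}$. Since $\log(2+|x|)\geq\log 2>0$, we may absorb the additive constant $m_*$ into the square-root term by enlarging the constant: there is $\al'>0$ with $m_*+\al\sqrt{\log(2+|x|)}\leq\al'\sqrt{\log(2+|x|)}$ for all $x$. For $|x|<N+1$ we use $|\mf X(x)|\leq R\leq R\,\sqrt{\log(2+|x|)}/\sqrt{\log 2}$. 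Taking
\[
C:=\max\Big\{\al',\ R/\sqrt{\log 2}\Big\}
\]
(a finite random variable, since $N$ and $R$ are almost surely finite) yields $|\mf X(x)|\leq C\sqrt{\log(2+|x|)}$ for all $x\in\mbb R$ almost surely, as claimed. The only mildly delicate point is confirming that $m$, $\widetilde m$, and the per-interval suprema are genuinely finite and $n$-independent; this is exactly where stationarity and the almost sure continuity (so that $\sup$ over a compact is finite and the process restricted to $[0,1]$ is a bona fide continuous Gaussian process, making Theorem \ref{Theorem: Gaussian Suprema} applicable) are used, and it is otherwise routine.
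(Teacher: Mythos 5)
Your proof is correct and follows essentially the same strategy as the paper's: tile $\mbb R$ by unit intervals, use stationarity to get $n$-independent variance and median, apply the Borell--TIS tail bound from Theorem \ref{Theorem: Gaussian Suprema}, and conclude by Borel--Cantelli. Your version is in fact somewhat more careful than the paper's in two minor respects — you treat $|\mf X|$ properly by applying the theorem to both $\mf X$ and $-\mf X$ (the paper tacitly applies a one-sided supremum bound to a two-sided supremum), and you explicitly absorb the additive median term and the contribution of the finitely many exceptional intervals into the random constant $C$ — but these are bookkeeping refinements, not a different route.
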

\begin{proof}
For every $n\in\mbb Z\setminus\{0\}$ and $c>0$, define the events
\begin{multline*}
E^{(c)}_n:=\left\{\sup_{x\in[n,n+1]}|\mf X(x)|\geq c\sqrt{\log|n|}\right\}\\
=\left\{\sup_{x\in[n,n+1]}\mf X(x)\geq c\sqrt{\log|n|}\right\}\cup\left\{\sup_{x\in[n,n+1]}-\mf X(x)\geq c\sqrt{\log|n|}\right\}.
\end{multline*}
By the Borel-Cantelli lemma, it suffices to prove that $\sum_{n}\mbf P[E^{(c)}_n]<\infty$
for a large enough $c>0$.
Since $\mf X$ is stationary, for every $n$, it holds that
\[\sup_{x\in[n,n+1]}\mbf E\big[\mf X(x)^2\big]=\mbf E[\mf X(0)^2]=:\si^2\]
and
\[\mbf{Med}\left[\sup_{x\in[n,n+1]}\mf X(x)\right]=\mbf{Med}\left[\sup_{x\in[0,1]}\mf X(x)\right]=:\mu;\]
the same holds true for $-\mf X$.
Thus, by applying Theorem \ref{Theorem: Gaussian Suprema}
to the suprema of $\mf X$ and $-\mf X$ on $[n,n+1]$ and a union bound,
$\mbf P[E^{(c)}_n]\leq2\exp\big((c\sqrt{\log |n|}-\mu)^2/2\si^2\big)$.
Since this is summable in $n$ for large enough $c>0$, the result is proved.
\end{proof}

\begin{remark}\label{Remark: Stronger Stationary Noise Suprema}
By examining the proof of Corollary \ref{Corollary: Stationary Noise Suprema},
we note that we can easily also prove the stronger statement that, almost surely,
\[\sup_{y\in[x,x+1]}|\mf X(y)|\leq C\sqrt{\log(2+|x|)},\]
since
\[\sup_{y\in[x,x+1]}|\mf X(y)|\leq\sup_{y\in[\lfloor x\rfloor,\lfloor x\rfloor+1]}|\mf X(y)|
+\sup_{y\in[\lfloor x\rfloor+1,\lfloor x\rfloor+2]}|\mf X(y)|.\]
\end{remark}

\begin{remark}\label{Remark: Optimality of Stationary Noise Suprema}
In the setting of Corollary \ref{Corollary: Stationary Noise Suprema},
if we also assume that
\[\lim_{|x|\to\infty}\mbf E\big[\mf X(0)\mf X(x)\big]=0,\]
then we can prove that the upper bound in Corollary \ref{Corollary: Stationary Noise Suprema}
is optimal, in the sense that we also have a matching lower bound of the form
\[\sup_{|y|\leq x}|\mf X(y)|\geq \tilde C\sqrt{\log(2+|x|)},\qquad x\in\mbb R\]
for some $0<\tilde C<C$
(see, e.g., \cite[Section 2.1]{CarmonaMolchanov} and references therein).
\end{remark}

\section{Schr\"odinger Operator Theory}
\label{Appendix: Operator}

\subsection{Proof of Lemma \ref{Lemma: Pointwise Form Bound}}

Consider first Cases 1 and 2. Since $f$ is continuous and square-integrable, $f(x)\to0$ as $x\to\infty$,
and thus
\[f(x)^2\leq\int_x^\infty\big|\big(f(y)^2\big)'\big|\d y\leq2\int_I|f(y)|\,|f'(y)|\d y.\]
The result then follows from the fact that for every $\ka>0$, we have the inequality
$|z\bar z|\leq\tfrac\ka2 z^2+\tfrac1{2\ka} \bar z^2$. Suppose then that we are in Case 3.
Define the function
\[h(x):=\begin{cases}
1&\text{if }x\in[0,b/2],\\
1-\frac{x-b/2}{b/2}&\text{if }x\in(b/2,b].
\end{cases}\]
Then, for every $x\in[0,b/2]$, one has
\begin{multline*}
f(x)^2
=f(x)^2h(x)\leq\int_x^b\big|\big(f(y)^2h(y)\big)'\big|\d y\\
\leq2\int_I|f(y)f'(y)|\,h(y)\d y+\int_If(y)^2|h'(y)|\d y.
\end{multline*}
Since $h\leq 1$ and $|h'|\leq2/b$, the same inequality used in Cases 1 and 2 yields the result.
To prove the bound for $x\in(b/2,b]$, we apply the same method with the function
\[h(x):=\begin{cases}
\frac{2x}b&\text{if }x\in[0,b/2],\\
1&\text{if }x\in(b/2,b].
\end{cases}\]

\subsection{Proof of Proposition \ref{Proposition: Classical Schrodinger Operator}}

\subsubsection{Step 1. Norm Equivalence and $\mc E$ is Semibounded}

We begin by proving that $\|\cdot\|_{+1}$ and $\|\cdot\|_*$ are equivalent
and that $\mc E$ is semibounded. In Cases 1, 2-D, and 3-D, it suffices to observe that, because $V\geq0$, one has
\[\mc E(f,f)=\tfrac12\|f'\|_2^2+\|V^{1/2}f\|_2^2\geq0.\]
In the other cases, where $\mc E(f,f)$ contains boundary terms of the form $-\al f(0)^2$ and $-\be f(b)^2$,
we get that $\mc E$ is semibounded and the equivalence of norms from Lemma \ref{Lemma: Pointwise Form Bound}.

\subsubsection{Step 2. $\mc E$ is Closed}

Knowing that $\|\cdot\|_{+1}$ and $\|\cdot\|_*$ are equivalent,
to prove that $\mc E$ is closed,
it suffices to show that $\big(D(\mc E),\langle\cdot,\cdot\rangle_*\big)$ is a Hilbert space.
This follows from the fact that Sobolev spaces and the $L^2$ space with measure $V(x)\dd x$ are complete,
noting further that $\|g'_n-g'\|_2\to0$ as $n\to\infty$ for a sequence $(g_n)_{n\in\mbb N}\subset D(\mc E)$
and $g\in\mr H^1_V$
implies by the fundamental theorem of calculus
that $g_n\to g$ pointwise. Hence in cases 2-D, 3-D, and 3-M, boundary
conditions of the form $g_n(0)=0$ and $g_n(b)=0$ are preserved in the limit $g$.

\subsubsection{Step 3. Form Core}

By equivalence of $\|\cdot\|_{+1}$ and $\|\cdot\|_*$,
to prove that $\mr{FC}$ is a form core for $\mc E$, it suffices
to show that $\mr{FC}$ is dense in $\big(D(\mc E),\langle\cdot,\cdot\rangle_*\big)$.

We begin by noting that it suffices to prove the result in Cases 1,
2-D, and 3-D. To illustrate this, consider Case 2-R: Let $\tilde I:=(-1,\infty)$,
and define $\tilde V:\tilde I\to[0,\infty)$ as $\tilde V(x)=0$ for $x\in(-1,0]$
and $\tilde V(x)=V(x)$ for $x\in (0,\infty)$. If the result is proved in Case 2-D, then
we know that for every locally absolutely continuous $\tilde f:\tilde I\to\mbb R$ such that
\begin{align}
\label{Equation: Extension of AC}
\tilde f(-1)=0
\qquad\text{and}\qquad
\int_{-1}^\infty \tilde f'(x)^2+\big(\tilde V(x)+1\big)\tilde f(x)^2\d x<\infty,
\end{align}
there exists a sequence
$(\tilde\phi_n)_{n\in\mbb N}$ of smooth functions compactly supported in $\tilde I$ such that
\[\lim_{n\to\infty}\int_{-1}^\infty\big(\tilde f'(x)-\tilde\phi_n'(x)\big)^2+\big(\tilde V(x)+1\big)\big(\tilde f(x)-\tilde\phi_n(x)\big)^2\d x=0.\]
We then get the result for Case 2-R by noting that the restriction of $\tilde\phi_n$ to $(0,\infty)$ is
an element of $\mr{FC}$, and that every function $f\in D(\mc E)$ can be extended to an $\tilde f$ of the form
\eqref{Equation: Extension of AC}. A similar extension argument can be used in Cases 3-R and 3-M.

Next, we argue that it suffices to prove the result in Case 3-D. We illustrate this in Case 2-D:
Let $\psi$ be a smooth cutoff function such that $\psi(x)=1$ for $x\in(0,1/2]$ and
$\psi(x)=0$ for $x\geq1$. Then, for every $R>0$, we let $\psi_R(x):=\psi(x/R)$.
Given that $\psi_R'(x)^2=(1/R)^2\psi'(x/R)^2\to0$ and $\big(\psi_R(x)-1\big)^2\to0$ pointwise in $x\in(0,\infty)$
as $R\to\infty$, for every $f\in D(\mc E)$, it is easy to check that
$\|\psi_Rf-f\|_*\to0$ as $R\to0$ by dominated convergence.
Next, since $\mr{supp}(\psi_Rf)$ is compact, if the result
holds in Case 3-D, then we can find a smooth $\phi_R:(0,\infty)\to\mbb R$ with
$\mr{supp}(\phi_R)\subset\mr{supp}(\psi_Rf)$ such that $\|\phi_R-\psi_Rf\|_*<1/R$. Taking
$R\to\infty$ then yields the result in Case 2-D; a similar cutoff argument holds for Case 1.

It now only remains to prove the result in Case 3-D. By \cite[Lemma 7.1.1]{Davies},
we know that, in Case 3-D, for every $f\in D(\mc E)$, there exists a sequence
$(\phi_n)_{n\in\mbb N}\subset\mr{FC}$ such that $\|\phi_n'-f'\|_2\to0$ and $\|\phi_n-f\|_2\to0$
as $n\to\infty$. Since $\sup_n\|\phi_n'\|_2<\infty$ and $\sup_n\|\phi_n\|_2<\infty$, it follows
from Lemma \ref{Lemma: Pointwise Form Bound} and \eqref{Equation: Uniformly Equicontinuous}
that $(\phi_n)_{n\in\mbb N}$ is uniformly bounded and equicontinuous, hence we get
$\|(\phi_{n_i}-f)V^{1/2}\|_2\to0$ along some subsequence $(n_i)_{i\in\mbb N}$ by Arzel\`a-Ascoli.
Therefore, $\|\phi_{n_i}-f\|_*\to0$ as $i\to\infty$, concluding the proof.

\subsubsection{Step 4. Unique Form for $H$ and Compact Resolvent}

Since $\mc E$ is closed and semibounded on $D(\mc E)$,
the fact that $H$ is the unique operator with form $\mc E$
follows from \cite[Theorem VIII.15]{ReedSimonI}.
It only remains to prove that $H$ has compact resolvent:
In Case 3, this follows from the fact that $ H$ is in this case a regular Sturm-Liouville
operator, and in Cases 1 and 2, from the fact that $V(x)\gg\log|x|$ as $x\to\pm\infty$:
Indeed, $ H$ is in those cases limit point (e.g., \cite[Chapter 7.3 and Theorem 7.4.3]{Zettl}),
and compactness of the resolvent is given by
 \cite[Theorem XIII.67]{ReedSimonIV} or the {\it Molchanov criterion} as stated in
\cite[Page 213]{Zettl}.

\section{Proof of Theorem \ref{Theorem: Deterministic Feynman-Kac}}
\label{Appendix: F-K}

\subsection{Proof of \eqref{Equation: Deterministic Symmetry Property}}

On the one hand, since the Gaussian kernel $\ms G_t$ is even,
the transition kernels satisfy $\Pi_Z(t;x,y)=\Pi_Z(t;y,x)$ for every $t>0$
and $x,y\in I$. On the other hand, given that
\[\big(Z^{x,y}_t(t-s):0\leq s\leq t\big)\deq \big(Z^{y,x}_t(s):0\leq s\leq t\big)\]
$\mbf E^{x,y}_t[F(Z)]=\mbf E^{y,x}_t[F(Z)]$
for any path functional $F$ that is invariant under time reversal.
In particular,
since local time is invariant under time reversal,
we have \eqref{Equation: Deterministic Symmetry Property}.

\subsection{Proof of \eqref{Equation: Deterministic Semigroup Property}}
\label{Section: Semigroup Property}

For every $x,y,z\in I$ and $t,\bar t>0$,
if we condition the path $Z^{x,y}_{t+\bar t}$ on $Z^{x,y}_{t+\bar t}(t)=z$, then the path segments
\begin{align}
\label{Equation: Segments Appendix}
\big(Z^{x,y}_{t+\bar t}(s):0\leq s\leq t\big)
\qquad\text{and}\qquad
\big(Z^{x,y}_{t+\bar t}(t+s):0\leq s\leq \bar t\big)
\end{align}
are independent and have respective distributions
$Z^{x,z}_t$ and $Z^{z,y}_{\bar t}$. Moreover, by Doob's $h$-transform,
$Z^{x,y}_{t+\bar t}(t)$ has density
\[z\mapsto\frac{\Pi_Z(t;x,z)\Pi_Z(\bar t;z,y)}{\Pi_Z(t+\bar t;x,y)}.\]
\eqref{Equation: Deterministic Semigroup Property} is then
a consequence of Fubini's theorem and additivity of local time:
Letting $Z^{1;x,z}_t$ and $Z^{2;z,y}_{\bar t}$
denote independent processes with respective distributions $Z^{x,z}_t$
and $Z^{z,y}_{\bar t}$ for all $z$, we have that $\int_IK(t;x,z)K(\bar t;z,y)\d z$
is equal to (recall the notation $\mf A_t$ from \eqref{Equation: Mathfrak A,B Shortcut})
\begin{align*}
&\Pi_Z(t+\bar t;x,y)\int_I\mbf E\bigg[\mr e^{\mf A_t(Z^{1;x,z}_t)+\mf A_{\bar t}(Z^{2;z,y}_{\bar t})}\bigg]
\frac{\Pi_Z(t;x,z)\Pi_Z(\bar t;z,y)}{\Pi_Z(t+\bar t;x,y)}\d z\\
&=\Pi_Z(t+\bar t;x,y)\int_I\mbf E\bigg[\mr e^{\mf A_{t+\bar t}(Z^{x,y}_{t+\bar t})}\bigg|Z^{x,y}_{t+\bar t}(t)=z\bigg]
\mbf P\big[Z^{x,y}_{t+\bar t}(t)\in\d z\big]\d z\\
&=K(t+\bar t;x,y),
\end{align*}
as desired.

\subsection{Feynman-Kac Formula}

We now complete the proof of Theorem \ref{Theorem: Deterministic Feynman-Kac} by
showing that $\mr e^{-t H}=K(t)$ for all $t>0$.
The proof of this in Case 1 can be found in \cite[Theorem 4.9]{Sznitman}.
For Cases 2-D and 3-D, we refer to \cite[(34) and Theorem 3.27]{ChungZhao}.
For Case 3-R, we have \cite[(3.3') and (3.4), Theorem 3.4 (b), and Lemmas 4.6 and 4.7]{Papanicolaou}.
It now only remains to prove the result in cases Cases 3-M and 2-R:

\subsubsection{Case 3-M}

Let us assume that we are considering Case 3-M, that is,
the operator $ H=-\tfrac12\De+V$ is acting on $(0,b)$ with mixed boundary conditions
(as in Assumption \ref{Assumption: DB}) and
\[ K(t;x,y)=\Pi_Y(t;x,y)\,\mbf E^{x,y}_t\Big[\mr e^{-\langle L_t(Y),V\rangle+\al \mf L^0_t(Y)-\infty\cdot \mf L^b_t(Y)}\Big].\]
As argued in \cite[Pages 62 and 63]{Papanicolaou}, it can be shown that
\begin{enumerate}
\item $ K(t)$ is a strongly continuous semigroup on $L^2$; and
\item if, for every $n\in\mbb N$, we define
\[K_n(t;x,y)=\Pi_Y(t;x,y)\,\mbf E^{x,y}_t\Big[\mr e^{-\langle L_t(Y),V\rangle+\al \mf L^0_t(Y)-n \mf L^b_t(Y)}\Big],\]
then for every $t>0$, $\|K_n(t)- K(t)\|_{\mr{op}}\to0$ as $n\to\infty$.
\end{enumerate}
Item (1) above implies that $ K(t)$ has a generator, so it only remains to prove that this generator is in fact $ H$.
By Lemma \ref{Lemma: Deterministic Compactness} in the case $p=1$, we know that the $K_n(t)$ and $ K(t)$
are compact. Therefore, if we let $ H_n$ be the operator $-\tfrac12\De+V$ on $(0,b)$ with Robin boundary
\[f'(0)+\al f(0)=-f'(b)-nf(0)=0,\]
then by repeating the argument in Section \ref{Section: Main Outline Part 3}, we need only prove that
$ H_n\to  H$ in the sense of convergence of eigenvalues and $L^2$-convergence of
eigenfunctions.

If we define the matrices
\[A:=\left[\begin{array}{cc}1&\al\\0&0\end{array}\right]
\qquad\text{and}\qquad
B:=\left[\begin{array}{cc}0&0\\0&1\end{array}\right]\]
and the vector function $F(x):=[f'(x),f(x)]^\top$,
then we can represent $ H$'s boundary conditions in matrix form as
$AF(0)+B F(b)=0$.
Similarly, if we let
\[C_n:=\left[\begin{array}{cc}0&0\\1/n&1\end{array}\right],\]
then $ H_n$'s boundary conditions are represented as $AF(0)+C_nF(b)=0$.
Given that $\|B-C_n\|\to0$ as $n\to\infty$, it follows from \cite[Theorems 3.5.1 and 3.5.2]{Zettl}
that for every $k\in\mbb N$, $\la_k( H_n)\to\la_k( H)$ and $\psi_k( H_n)\to\psi_k( H)$ uniformly on compacts.
Since $(0,b)$ is bounded, this implies $L^2$-convergence of the eigenfunctions, concluding the proof.

\subsubsection{Case 2-R}

Let us now assume that $ H$ acts on $(0,\infty)$ with Robin boundary at the origin and that
\[ K(t;x,y)=\Pi_X(t;x,y)\,\mbf E^{x,y}_t\Big[\mr e^{-\langle L_t(X),V\rangle+\al \mf L^0_t(X)}\Big].\]
The same arguments used in \cite[Theorem 3.4 (b)]{Papanicolaou} imply that this semigroup
is strongly continuous on $L^2$, and we know it is compact by Lemma \ref{Lemma: Deterministic Compactness}.

For every $n\in\mbb N$, let $ H_n=-\tfrac12\De+V$, acting on $(0,n)$ with mixed boundary conditions
\[f(0)+\al f'(0)=f(n)=0.\]
By the previous section, the semigroup generated by this operator is given by
\[K_n(t;x,y)=\Pi_{Y_n}(t;x,y)\,\mbf E^{x,y}_t\Big[\mr e^{-\langle L_t(Y_n),V\rangle+\al \mf L^0_t(Y_n)-\infty\cdot\mf L^n_t(Y_n)}\Big],\]
where $Y_n$ is a reflected Brownian motion on $(0,n)$. Arguing as in the previous section, it suffices to prove that
$K_n(t)\to  K(t)$ in operator norm and $ H_n\to  H$ in the sense of eigenvalues and eigenfunctions.

We begin with the semigroup convergence. We first note that $\|K_n(t)- K(t)\|_{\mr{op}}$
is ambiguous, since $K_n(t)$ and $ K(t)$ do not act on the same space.
However, by using an argument similar to \eqref{Equation: Reflected Motion 1},
we can extend the kernel $K_n(t)$ to $(0,\infty)^2$ by defining
\[\tilde K_n(t;x,y)=\Pi_{X}(t;x,y)\,\mbf E^{x,y}_t\Big[\mbf 1_{\{\tau_{[n,\infty)}(X)>t\}}\mr e^{-\langle L_t(X),V\rangle+\al \mf L^0_t(X)}\Big],\]
where $\tau_{[n,\infty)}$ is the first hitting time of $[n,\infty)$.
This transformation does not affect the eigenvalues, and the eigenfunctions are similarly extended from functions on $(0,n)$
vanishing on the boundary to functions on $(0,\infty)$ that are supported on $(0,n)$. One has
\[\|\tilde K_n(t)- K(t)\|_2^2=\int_0^\infty \tilde K_n(2t;x,x)-2\tilde K_{n,0}(2t;x,x)+ K(2t;x,x)\d x,\]
where
\[\tilde K_{n,0}(2t;x,x)=\Pi_{X}(2t;x,x)\,\mbf E^{x,x}_{2t}\Big[\mbf 1_{\{\tau_{[n,\infty)}(X)>t\}}\mr e^{-\langle L_{2t}(X),V\rangle+\al\mf L^0_{2t}(X)}\Big].\]
Thus it suffices to prove that
\[\lim_{n\to\infty}\int_0^\infty\tilde K_{n,0}(2t;x,x)\d x,\lim_{n\to\infty}\int_0^\infty\tilde K_n(2t;x,x)\d x=\int_0^\infty  K(2t;x,x)\d x.\]
Since $X^{x,x}_{2t}$ is almost surely continuous, hence bounded, the result is a straightforward application of monotone convergence
(both with $\mbf E^{x,x}$ and the $\dd x$ integral).

We now prove convergence of eigenvalues and eigenvectors.
Let $\mc E$ denote the form of $ H$ and $D(\mc E)$ its domain, as defined in Definition \ref{Definition: Forms}
for Case 2-R. We note that we can think of $ H_n$ as the operator with the same form
$\mc E$ but acting on the smaller domain
\[D_n:=\big\{f\in \mr H^1_V\big((0,\infty)\big):f(x)=0\text{ for every }x\geq n\big\}\subset D(\mc E).\]
These domains are increasing, in that $D_1\subset D_2\subset \cdots\subset D(\mc E)$.
A straightforward modification of the convergence argument presented in Section \ref{Section: Proof of Operator Convergence}
gives the desired result (at least through a subsequence).





\ACKNO{The author thanks Laure Dumaz for an insightful discussion on the one-dimensional Anderson Hamiltonian
and the parabolic Anderson model, which served as a chief
motivation for the writing of this paper.
The author thanks Michael Aizenman for helpful pointers in the literature regarding random Schr\"odinger
operators.
The author gratefully acknowledges Mykhaylo Shkolnikov for
his continuous guidance and support and for his help regarding a few technical obstacles in the proofs of this paper,
as well as Vadim Gorin and Mykhaylo Shkolnikov for discussions concerning the resolution of an error that
appeared in a previous version of the paper.

The author thanks anonymous referees for carefully reading several previous versions of this paper,
as well as a number of insightful comments that helped significantly improve
the presentation of the present version.}


\end{document}